\newtheorem{lem}{Lemma}[section]
\newtheorem{thm}[lem]{Theorem}
\newtheorem{pro}[lem]{Proposition}
\newtheorem{cor}[lem]{Corollary}
\newtheorem{exa}[lem]{Example}
\newcommand{\slrw}[1]{\stackrel{#1}{\longrightarrow}}
\newcommand{\ttn}[1]{\tau_n^{#1}}
\newcommand{\lrw}{\longrightarrow}
\newcommand{\LL}{\Lambda}
\newcommand{\xa}{\alpha}
\newcommand{\xb}{\beta}
\newcommand{\xc}{\gamma}
\newcommand{\caG}{\mathcal G}
\newcommand{\caD}{\mathcal D}
\newcommand{\caDb}{\mathcal D^b}
\newcommand{\caQ}{\mathcal Q}
\newcommand{\caM}{\mathcal M}
\newcommand{\caI}{\mathcal I}
\newcommand{\caP}{\mathcal P}
\newcommand{\RHom}{\mathbf{R}\strut\kern-.2em\operatorname{Hom}\nolimits}
\newcommand{\zZ}{{\mathbb Z}}
\newcommand{\zzv}{{\mathbb Z_{\vv}}}
\newcommand{\zzs}[1]{{\mathbb Z|_{#1} }}
\newcommand{\bai}{{\bar{i}}}
\newcommand{\baj}{{\bar{j}}}
\newcommand{\what}{\widehat}
\newcommand{\tLs}{\widetilde{\LL^{\sigma}}}
\newcommand{\tGs}{\widetilde{\GG^{\sigma}}}
\newcommand{\tL}{\widetilde{\LL}}
\newcommand{\dtL}{\Delta{\LL}}
\newcommand{\dtsL}{\Delta_{\sigma}{\LL}}
\newcommand{\dtnL}{\Delta_{\nu}{\LL}}
\newcommand{\olL}{\overline{\LL}}
\newcommand{\udgrm}{\underline{\mathrm{grmod}}}
\newcommand{\GG}{\Gamma}
\newcommand{\tG}{\widetilde{\Gamma}}
\newcommand{\trho}{\tilde{\rho}}
\newcommand{\olG}{\overline{\Gamma}}
\newcommand{\wht}[1]{\widehat{#1}}
\newcommand{\tQ}{\widetilde{Q}}
\newcommand{\olQ}{\overline{Q}}
\newcommand{\olrho}{\overline{\rho}}
\newcommand{\OO}{\Omega}
\newcommand{\identity}{\mathds{1}}
\newcommand{\Hom}{\mathrm{Hom}}
\newcommand{\Ext}{\mathrm{Ext}}
\newcommand{\soc}{\mathrm{soc}\,}
\newcommand{\vv}{\diamond}
\newcommand{\Ker}{\mathrm{Ker}\,}
\newcommand{\add}{\mathrm{add}\,}
\newcommand{\tr}{\mathrm{Tr}\,}
\newcommand{\mmod}{\mathrm{mod}\,}
\newcommand{\grm}{\mathrm{grmod}\,}
\newcommand{\caA}{{\mathcal A}}
\newcommand{\dmn}{\mathrm{dim}\,}
\newcommand{\dg}{\mathbf{deg}\,}
\newcommand{\bM}[1]{\mathbf{M}_{#1}\,}
\newcommand{\arr}[2]{\begin{array}{#1}#2\end{array}}
\newcommand{\eqqc}[2]{\begin{equation}\label{#1}#2\end{equation}}
\newcommand{\eqqcn}[2]{\[ #2 \]}
\newcommand{\eqqcnn}[2]{$ #2 $}
\newcommand{\mfd}{\mathfrak{d}}
\begin{document}

\title{On $n$-hereditary algebras and $n$-slice algebras}

\author[Guo and Hu]{Jin Yun Guo and Yanping Hu}
\address{Jin Yun Guo, Yanping Hu \\ MOE-LCSM, School of Mathematics and Statistics\\ Hunan Normal University\\ Changsha, Hunan 410081, P. R. China}

\email{gjy@hunnu.edu.cn(Guo), 1124320790@qq.com(Hu)}

\thanks{This work is partly supported by National Natural Science Foundation of China \#11671126, \#12071120 and the Construct Program of the Key Discipline in Hunan Province}

\begin{abstract}
In this paper we show that acyclic $n$-slice algebras are exactly
 acyclic $n$-hereditary algebras whose $(n+1)$-preprojective algebras are $(q+1,n+1)$-Koszul.
We also list the equivalent triangulated categories arising from the algebra constructions related to an $n$-slice algebra.
We show that higher slice algebras of finite type appear in  pairs and they share the Auslander-Reiten quiver for their higher preprojective components.
\end{abstract}

%\keywords{$n$-hereditary algebra; $(q+1,n+1)$-Koszul algebra; $n+1$-preprojective algebras; finite type; double slice}

\maketitle

\section{Introduction\label{int}}
The representation theory of hereditary algebras has been one of the central subjects in modern representation theory of algebras.
In recent years, $n$-hereditary algebras, including $n$-representation finite and $n$-representation infinite algebras, are introduced and studied as generalization of hereditary algebras in higher Auslander-Reiten theory (\cite{i007,i07,i11,io11,hio14}).
Graded self-injective algebras bear certain higher representation theory feature \cite{gw00}, its bound quiver is a stable $n$-translation quiver with natural $n$-translation $\tau$ induced by Nakayama functor \cite{g12,g16}.
An $n$-slice is the quadratic dual of a complete $\tau$-slice in such quiver, and  an $n$-slice algebra is an algebra whose bound quiver is an $n$-slice and whose  $(n+1)$-preprojective algebras is $(q+1,n+1)$-Koszul \cite{gxl21}.

In this paper, we first prove that acyclic $n$-hereditary algebras whose $(n+1)$-preprojective algebras are $(q+1,n+1)$-Koszul are exactly acyclic $n$-slice algebras (Theorem \ref{main} and Theorem \ref{main1}).
Recall that for the hereditary algebras, their preprojective algebras are $(q+1,2)$-Koszul algebras, so they are exactly $1$-slice algebras, and $q+3$ is exactly the Coxeter number of the underlying Dynkin diagram of its quiver when the algebra is of finite type \cite{bbk02}.
For an algebra defined by an $n$-translation quiver, being $(n+1,q+1)$-Koszul ensures its quadratic dual has $n$-almost split sequence in the category of  finitely generated projective modules under mild condition \cite{g16}.

The hereditary algebras are classified as of representation finite, tame and wild types.
For $n$-hereditary algebras, they are divided as  $n$-representation-finite and $n$-representation-infinite, and tame ones is defined inside $n$-representation-infinite \cite{hio14}.
A finite-tame-wild trichotomy for $n$-slice algebra using its Coxeter index and its spectral radius is proposed in \cite{gxl21}, which is consistent with \cite{hio14} for the finite-infinite dichotomy, but inconsistent for the tame case.
The tameness in \cite{hio14} is defined by the Noetherian $(n+1)$-preprojective algebra  and the tameness in \cite{gxl21} is defined by the finiteness of the Gelfand-Kirilov dimension of its $(n+1)$-preprojective algebra, or equivalently,  by the finite complexity of the trivial extension of its quadratic  dual.
It is  proved that the tameness defined in \cite{hio14} is the tameness defined in \cite{gxl21}, and the converse is conjectured to be true in \cite{gxl21}.

On the other hand, $n$-slice algebras of infinite type are  exactly the quasi $n$-Fano algebras  whose $(n+1)$-preprojective algebras are Koszul (or $(\infty,n+1)$-Koszul, see Theorem \ref{Fano}).

Apart from the  $(q+1,n+1)$-Koszulity of its  $(n+1)$-preprojective algebra, an $n$-slice algebra is defined using quiver with relations.
In fact, an $n$-slice is quadratic dual of $n$-properly-graded quiver $Q$  (see Sedtion \ref{pre} for definition), obtained by taking complete $\tau$-slice in the infinite stable $n$-translation quiver $\zzs{n-1} Q$(see Section \ref{eqtc} for definition), a generalization of $\zZ Q$.
So $n$-slice algebras provide a combinatoric approach to $n$-hereditary algebra, and this may leads to more direct generalization of the representation theory of hereditary algebras.
For example,  for an $n$-slice algebra with bound quiver $Q$, the Auslander-Reiten quiver of the $n$-preprojective and $n$-preinjective components are described using the quiver $\zzs{n-1} Q$ (see \cite{gll19b}), and the $n$-APR tilts are realized by a $\tau$-mutation of the $\tau$-slice in $\zzs{n-1} Q$ (see \cite{gx21}).
As a consequence, $n$-slice algebras and their types (finite, tame and wild) are invariant under APR tilts.
An $(n+1)$-slice algebra can be constructed from an $n$-slice using returning arrow constructions of such quivers, see \cite{gl16} for certain finite type ones and \cite{ghl22b} for infinite type.

Start from an $n$-slice algebra $\GG$ with bound quiver $Q^{\perp}$, write  $\LL=\GG^{!,op}$ for its quadratic dual.
We  revisit variants of constructions of quivers, such as the returning arrow quiver $\tQ$, the quivers $\zZ_{\vv} \tQ$ and $\zzs{n-1} Q$ related to $Q$,  and algebras related  to these quivers in Section \ref{eqtc}.
It is proved in \cite{g20} that certain twisted trivial extension $\tL$ of the quadratic dual $\LL$ of $\GG$ is isomorphic to the quadratic dual of the $(n+1)$-preprojective algebra $\tG=\Pi(\GG)$ of $\GG$ (See  \eqref{nslicedual} in Section \ref{pre}).
One also constructs the Beilinson-Green algebras $\LL^N$ of $\tL$ and $\GG^N$ of $\tG$, the smash products $\widehat{\LL^N}$ of $\tL$ and $\widehat{\GG^N}$ of $\tG$, and some other related algebras.
We listed in Section \ref{eqtc} the equivalent triangulate categories related to the derived category of an $n$-slice algebra arising from 
these algebras constructions.
For the $n$-slice algebra of infinite type, its $(n+1)$-preprojective algebra is an AS-regular algebra and this can be regarded as an algebra version of common generalization of Beilinson correspondence \cite{b78} and Bernstein-Gelfand-Gelfand correspondence \cite{bgg78} (see Picture \eqref{depictingcat} in Section \ref{eqtc}).
The quiver constructions are used in the last part of the paper to study the higher preprojective components of $n$-slice algebras of finite type.

Auslander-Reiten quivers are important tools in studying the representation theory of hereditary algebra of finite type.
By using hammocks  \cite{b86,rv87}, one obtains the Auslander-Reiten quiver of the path algebra $kQ$ of finite type from the quiver $\mathbb Z Q$ by attaching to each vertex of $Q$ the hammock starting at it.
This has a higher representation theoretic interpretation: higher slice algebras of finite type appear in pairs (Theorem \ref{fin}) and they share the same quiver as their Auslander-Reiten quiver of higher preprojective (higher preinjective) components(Theorem \ref{narq}).
We introduce double translation quiver and double slice to characterize these quivers: the Auslander-Reiten quiver of the $n$-preprojective component of  an $n$-slice algebra $\GG$ of finite type of Coxeter index $q+1$ is the opposite quiver of a double slice which is obtained by connecting  the quiver $Q^{\perp}$ of $\GG$  and the quiver ${Q^c}^{\perp}$ of its companion $\GG^c$,  a $q$-slice algebra of finite type of Coxeter index $n+1$.
Double slices provided higher representation theoretic analog of the constructions in   \cite{b86,rv87}.
In fact, double slice has been used in \cite{gl16} (called cuboid truncation there) to give a  iterated  construction of higher representation-finite algebra of type $A$.

The paper is organized as follow.
In Section \ref{pre}, we recall notions and some known results needed in the paper.
We prove that acyclic $n$-slice algebras are exactly acyclic $n$-hereditary algebras whose $(n+1)$-preprojective algebras are $(q+1,n+1)$-Koszul in the next two sections.
In Section  5, we recall algebra constructions related to an $n$-slice algebra and list the equivalent triangulate categories arising from these constructions.
In Section \ref{finite}, we prove that  higher slice algebras of finite type appear in  pairs, we also introduce double translation quivers and double slices and discuss their properties.
We show in the last section that the pair of the higher slice algebras of finite type share the Auslander-Reiten quiver for their higher preprojective components, with an example for illustratons.

\section{Preliminary\label{pre}}

In this paper, we assume that $k$ is a field, and $\LL = \LL_0 + \LL_1+\cdots$ is a graded algebra over $k$ with $\LL_0$ a direct sum of copies of $k$ such that $\LL$ is generated by $\LL_0$ and $\LL_1$.
Such algebra is determined by a bound quiver $Q= (Q_0,Q_1, \rho)$ \cite{g16}.

Recall that a bound quiver $Q= (Q_0,Q_1, \rho)$ is a quiver with $Q_0$ the set of vertices, $Q_1$ the set of arrows and $\rho$ a set of relations.
The arrow set $Q_1$ is usually defined with two maps $s, t$ from $Q_1$ to $Q_0$ to assign an arrow $\alpha$ its starting vertex $s(\alpha)$ and its ending vertex $t(\alpha)$.
Write $Q_t$ for the set of paths of length $t$ in the quiver $Q$, and write $kQ_t$ for space spanned by $Q_t$.
Let $kQ =\bigoplus_{t\ge 0} kQ_t$ be the path algebra of the quiver $Q$ over $k$.
We also write  $s(p)$ for the starting vertex of a path $p$ and $t(p)$ for the terminating vertex of $p$.
Write $s(x)=i$ if $x$ is a linear combination of paths starting at vertex $i$, and  write $t(x)=j$ if $x$ is a linear combination of paths ending at vertex $j$.
The relation set $\rho$ is a set of linear combinations of paths of length $\ge 2$. We may assume that the paths appearing in each of the element in $\rho$ have the same length, since we deal with graded algebra.
Through this paper, we assume that the relations are normalized such that each element in $\rho$ is a linear combination of paths of the same length starting at the same vertex and ending at the same vertex.
Conventionally, modules are assumed to be finitely generated left module in this paper.

Let $\LL_0=  \bigoplus\limits_{i\in Q_0} k_i$, with $k_i \simeq k$ as algebras, and let $e_i$ be the image of the identity in the $i$th copy of $k$ under the canonical embedding.
Then $\{e_i| i \in Q_0\}$ is a complete set of orthogonal primitive idempotents in $\LL_0$ and $\LL_1 = \bigoplus\limits_{i,j \in Q_0 }e_j \LL_1 e_i$ as $\LL_0 $-bimodules.
Fix a basis $Q_1^{ij}$ of $e_j \LL_1 e_i$ for any pair $i, j\in Q_0$, take the elements of $Q_1^{ij}$ as arrows from $i$ to $j$, and let $Q_1= \cup_{(i,j)\in Q_0\times Q_0} Q_1^{ ij}.$
Thus $\LL \simeq kQ/(\rho)$ as algebras for some relation set $\rho$, and $\LL_t \simeq kQ_t /((\rho)\cap kQ_t) $ as $\LL_0$-bimodules.
A path whose image is nonzero in $\LL $ is called a {\em bound path}.

\medskip

A bound quiver $Q= (Q_0,Q_1,\rho)$ is called  {\em quadratic} if $\rho$ is a set of linear combination of paths of length $2$.
In this case, $\LL= kQ/(\rho)$ is called a {\em quadratic algebra}.

Regard the idempotents $e_i$ as the linear functions on $\bigoplus_{i\in Q_0} k e_i$ such that $e_i(e_j) = \delta_{i,j}$, and for $i,j\in Q_0$, arrows from $i$ to $j$ are also regarded as the linear functions on $e_j k Q_1 e_i$ such that for any arrows $\xa, \xb$, we have $\xa(\xb) =\delta_{\xa,\xb}$.
By defining $\xa_1 \cdots \xa_r (\xb_1 \cdots \xb_r)=\xa_1(\xb_1) \cdots \xa_r ( \xb_r) $, $e_j kQ_r e_i$ is identified with its dual space for each $r$ and each pair $i,j$ of vertices, and the set of paths of length $r$ is regarded as the dual basis to itself in $e_j kQ_r e_i$.
Take a spanning set $ \rho^{\perp}_{i,j}$ of orthogonal subspace of the set $e_j k\rho e_i$ in the space $e_j kQ_2 e_i$ for each pair $i,j \in Q_0$, and set \eqqc{relationqd}{\rho^{\perp} = \bigcup_{i,j\in Q_0} \rho^{\perp}_{i,j}.}
The {\em quadratic dual quiver} of $Q$ is defined as the bound quiver $Q^{\perp} =(Q_0,Q_1, \rho^{\perp})$, and the {\em quadratic dual algebra} of $\LL$ is the algebra $\LL^{!,op} \simeq kQ/(\rho^{\perp})$ defined by the bound quiver $Q^{\perp}$ (see \cite{g20}).

\medskip

Recall that a bound quiver $Q =(Q_0,Q_1,\rho)$  is called {\em $n$-properly-graded} if all the maximal bound paths have the same length $n$.
The graded algebra $\LL = k Q/(\rho)$ defined by an  $n$-properly-graded quiver is called an {\em $n$-properly-graded algebra.}

If an  $n$-properly-graded quiver $Q$ is quadratic, its quadratic dual $Q^{\perp}$ is called an {\em $n$-slice}.

\medskip

Let $Q = (Q_0,Q_1,\rho)$ be an acyclic bound quiver without bound path of  infinite length. 
let $\LL$ be the algebra defined by $Q$.
The returning arrow quiver $\tQ$ is the quiver with the same vertex set $\tQ_0=Q_0$ as $Q$ and arrow set $\tQ_1 = Q_1 \cup Q_{1,\caM}$, with  $Q_{1,\caM}=\{\xb_p:t(p)\rightarrow s(p) | p\in \caM\}$, where $\caM$ is a basis for $\soc_{\LL^e} \LL$. 
If $Q$ is an $n$-properly-graded quiver,  $\caM$ is a maximal linearly independent set of maximal bound paths of $Q$ and the  relation set $\trho$ will be discussed in this case.

By \cite{g20}, there is an epimorphism $\phi$ from the free $\LL$-bimodule $M_{\caM}$ generated by $Q_{1,\caM}$ to $D\LL$ if $Q$ is $n$-properly-graded quiver, take $\rho_{\caM}$ a generating set of $\ker \phi$.

Recall that the trivial extension $\LL\ltimes M$ of an algebra $\LL$ by an $\LL$-bimodule $M$ is the algebra defined on the vector spaces $\LL\oplus M$ with the multiplication defined by $(a,x)(b,y)=(ab, ay+xb)$ for $a,b\in \LL$ and $x,y\in M$.
$\dtL = \LL\ltimes D\LL$ is called the  trivial extension of $\LL$.
Let $\sigma$ be a graded automorphism of $\LL$, that is, an automorphism that preserves the degree of homogeneous element.
Let $M$ be an $\LL$-bimodule.
Define the twist $M^{\sigma}$ of $M$ as the bimodule with $M$ as the vector space.
The left multiplication is the same as $M$, and the right multiplication is twisted by $\sigma$, that is,  defined by $x\cdot b=x\sigma(b)$ for all $x \in M^{\sigma}$ and $b \in \LL$.
Define the twisted trivial extension $ \dtsL= \LL \ltimes D \LL^{\sigma}$ to be the trivial extension of $\LL$ by the twisted $\LL$-bimodule $D \LL^{\sigma}$ if $Q$ is $n$-properly-graded quiver.
Write $\rho_{\caM, \sigma} = \{\sum_{}x_{p,t}\xb_{p,t}\sigma(y_{p,t})|\sum_{}x_{p,t}\xb_{p,t} y_{p,t} \in \rho_{\caM} \}$.
Then $\rho_{\caM, \sigma} $ is a relation set for the bimodule $ D \LL^{\sigma}$  if $Q$ is $n$-properly-graded quiver. 
 Write $Q_{2,\caM}$ for the set of paths of length $2$ formed by arrows in  $Q_{1,\caM}$.
By Proposition 2,2 of \cite{fp02} and Theorem 3.1 and Corollary 3.3 of \cite{g20}, we have the following results. 

\begin{pro}\label{returning_trivial_ext}
If $\LL$ is  an algebra with acyclic bound quiver $Q$, then $\dtL\simeq k\tQ/(\trho)$ and $\dtsL\simeq k\tQ/(\trho^{\sigma})$.

 If $Q$ is an $n$-properly-graded quiver, then $$\trho = \rho\cup\rho_{\caM}\cup Q_{2,\caM}  \mbox{ and }\trho^{\sigma} = \rho\cup\rho_{\caM, \sigma}\cup Q_{2,\caM}.$$ 
\end{pro}
Note that $\tQ$ is a stable $n$-translation quiver introduced in \cite{g12,g16}.
The bound quivers of  graded self-injective algebras  are exactly {\em the stable $n$-translation quivers}, with the Nakayama permutation $\tau$ as its $n$-translation \cite{g16} (called stable bound quiver of Loewy length $n+2$ in \cite{g12}).

Write $\Pi(\GG)$ for the $(n+1)$-preprojective algebra of $\GG$, the following result is proved in \cite{g20}.

\begin{thm}\label{mainthm}
Assume that $\LL$ is a Koszul $n$-properly-graded  algebra and $\dtL$ is quadratic. Then $$\Pi( \LL^{ !, op }) \simeq (\dtnL)^{!,op},$$
for the graded automorphism $\nu$ of $\LL$ which sending an arrow $\xa$ to $(-1)^{n} \xa$ .
\end{thm}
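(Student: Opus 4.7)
The plan is to establish the isomorphism by showing that both algebras admit the same quadratic presentation on the returning arrow quiver $\tQ$. Since $\dtL$ is quadratic and $\nu$ is a graded automorphism scaling each arrow, the twisted extension $\dtnL$ is also quadratic, so Proposition \ref{returning_trivial_ext} gives $\dtnL \simeq k\tQ/(\trho^\nu)$ with $\trho^\nu = \rho \cup \rho_{\caM,\nu} \cup Q_{2,\caM}$. Consequently $(\dtnL)^{!,op}$ is presented on $\tQ$ by the orthogonals in $k\tQ_2$. Taking quadratic duals of both sides, it suffices to prove $\Pi(\LL^{!,op})^{!,op} \simeq \dtnL$, i.e.\ to present $\Pi(\GG)$ on the same quiver $\tQ$ whose quadratic relations dualize to $\trho^\nu$.

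First I would use that $\LL$ is Koszul and $n$-properly-graded to conclude that $\GG=\LL^{!,op}$ has global dimension $n$, so the $(n+1)$-preprojective algebra has the tensor algebra description $\Pi(\GG)=T_\GG \Ext^n_\GG(D\GG,\GG)$. Next I would identify the $\GG$-bimodule $\Ext^n_\GG(D\GG,\GG)$ via Koszul duality: the minimal bimodule (Koszul) resolution of $\GG$ is built from tensoring with shifts of $\LL$, and pairing $D\GG$ against its $n$-th term produces an identification with the twisted bimodule $D\LL^\nu$ under the Koszul duality transport between $\LL$- and $\GG$-bimodules. This step also pins down the new generators of $\Pi(\GG)$ as precisely the returning arrows $\xb_p$ for $p\in\caM$, so that the underlying quiver of $\Pi(\GG)$ is indeed $\tQ$.

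Second I would match the quadratic relations. The defining structure of the tensor algebra $T_\GG \Ext^n_\GG(D\GG,\GG)$, reinterpreted as a quadratic algebra on $\tQ$, yields exactly three families of length-two relations that mirror those of $\dtnL$ after taking orthogonals: the quadratic relations of $\GG$ itself (orthogonal to $\rho$); the bimodule relations encoding $\Ext^n_\GG(D\GG,\GG)\simeq D\LL^\nu$ as an $\LL$-bimodule, which dualize to $\rho_{\caM,\nu}$; and in degree two the contribution from $\Ext^n\otimes_\GG \Ext^n$ is orthogonal in $k\tQ_2$ to the paths in $Q_{2,\caM}$, accounting for the last family. Comparing these orthogonals inside $k\tQ_2$ yields the desired isomorphism.

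The main obstacle I anticipate is the sign bookkeeping needed to show that the automorphism appearing is precisely $\nu:\xa\mapsto(-1)^n\xa$ and not some other graded twist. The factor $(-1)^n$ is not inserted by hand but arises naturally from the Koszul sign rule as an arrow of cohomological degree $n$ (the returning arrow viewed as a class in $\Ext^n$) is moved past an ordinary degree-one arrow in the Yoneda product defining the bimodule action on $\Ext^n_\GG(D\GG,\GG)$. Carrying out this sign computation cleanly, via an explicit comparison between the bar and Koszul resolutions and the pairing that identifies $D\GG$ with a shifted dual, is the delicate technical core of the proof.
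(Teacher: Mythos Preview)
The paper does not prove this theorem; it is imported from \cite{g20} (see the sentence immediately preceding the statement: ``the following result is proved in \cite{g20}'', and the later invocation ``by Theorem~5.3 of \cite{g20}'' in the proof of Theorem~\ref{main1}). There is therefore no proof in this paper to compare your proposal against.

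That said, your outline is in the spirit of how the result is established in the source: present both algebras quadratically on the returning arrow quiver $\tQ$ and match relations, with the twist $(-1)^n$ emerging from Koszul sign conventions when the returning arrow (cohomological degree $n$) is commuted past an ordinary arrow. Two points to tighten. First, your reduction ``it suffices to prove $\Pi(\GG)^{!,op}\simeq\dtnL$'' quietly assumes that $\Pi(\GG)$ is quadratic; this is part of what must be shown, not a hypothesis you can invoke. Second, the assertion ``$\Ext^n_\GG(D\GG,\GG)\simeq D\LL^\nu$ as an $\LL$-bimodule'' is a type mismatch: $\Ext^n_\GG(D\GG,\GG)$ is a $\GG$-bimodule. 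What you actually need is that its $\GG$-bimodule generators are indexed by $\caM$ (giving the returning arrows) and that its $\GG$-bimodule relations sit inside $e_j k\tQ_2 e_i$ as the orthogonal complement of the $\LL$-bimodule relations $\rho_{\caM,\nu}$ of $D\LL^\nu$. With those adjustments your sketch is a correct plan; the delicate sign bookkeeping you anticipate is indeed the technical heart, and is carried out explicitly in \cite{g20}.
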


So we have that $\Pi( \LL^{ !, op })\simeq k\tQ/(\trho^{\nu,\perp})$.

\medskip

Recall that a graded algebra $\tL= \sum_{t\ge 0} \tL_t$ is called a {\em $(p,q)$-Koszul algebra} if $\tL_t=0$ for $t> p$ and $\tL_0$ has a graded projective resolution \eqqc{projres_tL}{\cdots \lrw P^{t} \slrw{f_t}  \cdots \lrw P^{1}\slrw{f_1} P^{0} \slrw{f_0} \tL_0\lrw 0,} such that $P^{t}$ is generated by its degree $t$ part for $t\le q$ and $\ker f_q$ is concentrated in degree $q+p$ \cite{bbk02,g16}.
The concept of $(p,q)$-Koszul unifies almost Koszul and Koszul by allowing $p,q$ to be infinite: a $(p,q)$-Koszul algebra is Koszul when one of $p, q$ is infinite\cite{bbk02}.
Note that a $(p,q)$-Koszul algebra is quadratic.

\medskip

A graded algebra $\tL$ defined by an $n$-translation quiver $\tQ$ is called an {\em $n$-translation algebra}, if there is a $q \ge 1$ or $q=\infty$ such that $\tL$ is an $(n+1,q+1)$-Koszul algebra \cite{g16}.
Conventionally, we take $q+1 = \infty$ when $q=\infty$.
A {\em stable $n$-translation algebra} $\tL$ is a $(n+1,q+1)$-Koszul self-injective algebra for some $q  \ge 1$ or $q=\infty$, and we call $q+1$ the {\em Coxeter index} of $\tL$.

\medskip

Let $\LL$ be the $n$-properly-graded algebra defined by $Q$, if the trivial extension $\dtL$ is a stable $n$-translation algebra, the quadratic dual algebra $\GG=kQ/(\rho^\perp)$ defined by the $n$-slice $Q^{\perp}$ is called an {\em $n$-slice algebra}.

Theorem \ref{mainthm} tells us  that \eqqc{nslicedual}{\Pi( \GG) \simeq( \Delta_{\nu} (\GG^{!,op}))^{!,op}} for an $n$-slice algebra $\GG$.

\medskip
The classification of $n$-slice algebra are discussed in \cite{gxl21}, where the Loewy matrix of its quadratic dual plays an important role.

Let $\tG= \Pi(\GG)$ be the $(n+1)$-preprojective algebra of an $n$-slice algebra $\GG$.
Recall that $\GG$ is  of {\em finite type} if $\tG$ is finite dimensional, of {\em tame type} if  $\tG$ is of finite Gelfand-Kirilov dimension (not zero), and of {\em wild type}  if $\tG$ is of infinite Gelfand-Kirilov dimension.
Let $\tL$ be the quadratic dual of $\tG$.
This trichotomy of $n$-slice algebra $\GG$ is also characterized by using the Coxeter index and the Loewy matrix of $\tL$,  see  Theorem 3.7 of \cite{gxl21}.

For an $n$-slice algebra $\GG$, its quadratic dual $\LL$ is an $n$-properly-graded algebra.
Let $\tL$ be its twisted trivial extension, it is a $(n+1,q+1)$-Koszul selfinjective algebra of Loewy length $n+2$.
Write $L= L_{\tL}$ for the Loewy matrix of $\tL$ \cite{gw00} , we have following characterization of finite-tame-wild trichonomy using $\tL$(Theorem 3.7 of \cite{gxl21}).

\begin{thm}\label{trichotomy}
Let $\GG$ be an $n$-slice algebra with Coxeter index $q+1$.
\begin{enumerate}
 \item $\GG$ is of finite type if and only if   $q+1$ is finite.

 \item $\GG$ is of tame type  if and only if $\tL$ is of finite complexity and not periodic, if and only if $q$ is infinite and  the  spectral radius of $L$ is $1$.

 \item $\GG$ is of wild type if and only if $\tL$ is of infinite complexity, if and only if  $q$ is infinite and  the  spectral radius of $L$ is larger than $1$.
 \end{enumerate}
\end{thm}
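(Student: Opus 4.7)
The plan is to reduce the trichotomy to the spectral behavior of the Loewy matrix $L=L_{\tL}$, via the Koszul duality $\Pi(\GG)\simeq(\Delta_\nu\LL)^{!,op}$, i.e.\ $\tG\simeq\tL^{!,op}$, recorded in \eqref{nslicedual} and Theorem \ref{mainthm}. Since $\GG$ is of finite, tame, or wild type according to whether $\tG$ is finite-dimensional, of finite nonzero Gelfand--Kirillov dimension, or of infinite Gelfand--Kirillov dimension, the whole statement is really a statement about the growth of $\dim_k \tG_t$, which is exactly what $L$ controls.

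For part (1) I would argue directly from the $(n+1,q+1)$-Koszul property of $\tL$. By Koszul duality its quadratic dual $\tG$ vanishes in degrees $>q+1$ whenever $q+1<\infty$, so in that case $\tG$ is finite-dimensional and $\GG$ is of finite type; conversely, when $q+1=\infty$ the algebra $\tL$ is genuinely Koszul with an unbounded minimal graded projective resolution of $\tL_0$, so $\tG=\tL^{!,op}$ is infinite-dimensional and $\GG$ cannot be of finite type. This establishes (1) and reduces (2) and (3) to the regime $q=\infty$. For (2) and (3) the key tool is the numerical identity expressing the dimension vector of $\tG_t$ in terms of the $t$-th power of the Loewy matrix of $\tL$: since $\tL$ is $n+1$-Koszul selfinjective, the $t$-th term of its minimal graded projective resolution of $\tL_0$ has dimension vector computed from $L^t$ applied to the canonical idempotent vectors, and Koszul duality turns this into $\dim \tG_t$. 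With this dictionary in hand, Perron--Frobenius applied to the non-negative integer matrix $L$ gives the decisive alternative: either $\rho(L)=1$, in which case $\|L^t\|$ grows at most polynomially and $\mathrm{GKdim}\,\tG<\infty$ with the complexity $\mathrm{cx}_{\tL}(\tL_0)$ finite; or $\rho(L)>1$, in which case $\|L^t\|$ grows exponentially and both $\mathrm{GKdim}\,\tG$ and the complexity are infinite.

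The remaining subtlety is the non-periodicity clause in (2). If $\tL$ were periodic, its minimal resolution would be eventually periodic, forcing $L^t$ to be bounded and hence $\tG$ finite-dimensional; this contradicts $q=\infty$. So among algebras with $q=\infty$, finite complexity and non-periodicity together form precisely the regime $\rho(L)=1$. I expect the main obstacle to be the clean formulation of the growth dictionary between $\dim\tG_t$, powers $L^t$, and the complexity of $\tL$ in the twisted selfinjective setting — in particular handling the twist by $\nu$ so that the Loewy matrix one writes down really is the one that records the dimensions of $\tG_t$. Once that dictionary is pinned down, the three equivalences in each of (2) and (3) follow by combining it with standard Perron--Frobenius. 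These technical ingredients are precisely those assembled in Theorem~3.7 of \cite{gxl21}, and the proof here would follow that route.
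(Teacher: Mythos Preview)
The paper does not give its own proof of this theorem; it is stated in the preliminary section with a pointer to Theorem~3.7 of \cite{gxl21}, which you also cite at the end. Your sketch is the right route and matches that reference: reduce everything to the growth of $\dim_k\tG_t$ via the Koszul duality $\tG\simeq\tL^{!,op}$, then control that growth by the spectral radius of the Loewy matrix.

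There is one genuine slip, in your handling of the non-periodicity clause. You argue that if $\tL$ were periodic then $L^t$ would be bounded ``and hence $\tG$ finite-dimensional; this contradicts $q=\infty$''. But bounded $\dim_k\tG_t$ does \emph{not} force $\tG$ to be finite-dimensional --- it only gives $\mathrm{GKdim}\,\tG\le 1$ --- so no contradiction with $q=\infty$ arises that way. The correct reason periodicity cannot coexist with $q=\infty$ (for $n\ge 1$) is a degree count: if $\Omega^{r}(\tL_0)$ is semisimple it must lie in the socle of $P^{r-1}$, which is concentrated in degree $(r-1)+(n+1)=r+n$, while Koszulity forces $\Omega^{r}(\tL_0)$ to be generated in degree $r$; since a semisimple module sits in a single degree, this gives $r=r+n$, i.e.\ $n=0$. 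Equivalently, in the Brenner--Butler--King framework a self-injective algebra of Loewy length $n+2$ whose simples are $\Omega$-periodic is genuinely almost Koszul with finite second parameter, so $q+1<\infty$. With this correction in place, your Perron--Frobenius dichotomy on $\rho(L)$ cleanly yields (2) and (3).
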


\medskip

\section{Twisted trivial extensions and higher preprojective algebras \label{nhns}}

Let $\GG$ be a finite dimensional algebra of global dimension $\le n$, let $\tG =\Pi(\GG)$ be the $(n+1)$-preprojective algebra of $\GG$. 
 If $\GG$ is quadratic, write $\LL= \GG^{!,op} $ for its quadratic dual.  

Assume that $\LL$ is an acyclic n-properly-graded algebra, consider the twisted trivial extension $\dtsL$ of $\LL$ with respect to an automorphism $\sigma$.
With the returning arrow as degree $1$ element, $\dtsL$ are naturally a graded algebras extending the gradation of $\LL$ as its first gradation.
With the algebra $\LL$ as degree $0$ component and $D\LL^{\sigma}$ as degree $1$ component, $\dtsL$ is endowed with a second gradation and is made a bigraded algebra graded by $\zZ^2=\zZ \times \zZ$.
This is equivalent to take the gradation of the tensor algebra $T_{\GG}(\Ext_{\GG}^n(D \GG, \GG))$ as the second gradation,  and $\tG$ is a bigraded algebra.

As the algebra defined by the returning arrow quiver $\tQ^{\sigma}$, the second gradation is defined by endowing the old arrows in $Q$ with degree $0$ and the returning arrows with degree $1$.
We have the decomposition $\dtsL= \sum_t\dtsL_{t,-}$ with respect to the first gradation and $\dtsL=\sum_t\dtsL_{-,t}$ with respect to the second gradation.
Write $()_{*,-}$ for the shift with respect to the first gradation and
$()_{-,*}$ for the shift with respect to the second gradation.

We have that \eqqc{fdegtL}{\dtsL_{t,-} = \LL_t + D\LL_{n+1-t},} with the convention that $D\LL_{n+1}=0$.
So \eqqc{dualbimod}{D\LL_{1,-} =  kQ_{1,\caM} =\sum_{p\in \caM} k\xb_p = D\LL_{n},} and  \eqqc{fdegDL}{ D\LL_{t,-} = \sum_{s=0}^n \LL_{s}D\LL_{n}\LL_{t-1-s} \mbox{ and } D\LL_{-,1} = D\LL,} that is $D\LL$ is concentrated in degree $(-,1)$ with respect to the second grading.
Especially
\eqqc{degree1fortL}{ \dtsL_{-,0} =  \LL \mbox{ and } \dtsL_{-,1} =D\LL.}

Now assume that $\tLs= \dtsL$ is quadratic, write $\tGs= \tL^{\sigma,!,op}$ for its quadratic dual.
Then since $\tGs \simeq k\tQ/(\trho^{\sigma,\perp})$, the second gradation of $\tLs$ induces a second gradation on $\tGs$, by endowing the arrows in $Q_1$ with degree $0$ and the arrow in $Q_{1,\caM}$ with degree $1$.
In this case, we obviously have that
\eqqc{degreezerofortG}{\tGs_{-,0} =  \GG.}

Assume that  $\tG $ is a $(q+1,n+1)$-Koszul algebra with respect to the first gradation, it is quadratic with this gradation.
The bigradation of $\tG$ is coherent with the  bigradation on its quadratic dual  $\tL=\tG^{!,op}$, and $\tL$ is an  $(n+1,q+1)$-Koszul algebra with respect to the first gradation.
So we have a Koszul complex \eqqc{Koszulc}{ \tG\otimes D\tL_{(n+1,*)} \slrw{f_{n+1}} \tG\otimes D\tL_{(n,*)} \lrw \cdots \to \tG\otimes D\tL_{(1,*)} \slrw{f_1} \tG\otimes D\tL_{(0,*)}} with respect to the first gradation.

\begin{lem}\label{GGKoszul}
If $\tG$ is $(q+1,n+1)$-Koszul, then $\GG$ is Koszul.
\end{lem}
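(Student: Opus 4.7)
The plan is to show $\GG$ is Koszul by extracting a linear projective resolution of $\GG_0$ over $\GG$ from the complex \eqref{Koszulc} for $\tG$, specifically by taking its second-bigrading-zero component.

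First I would augment \eqref{Koszulc} by the canonical map $\tG\otimes D\tL_{(0,*)} \to \tG_0$. The $(q+1,n+1)$-Koszul hypothesis on $\tG$ with respect to the first gradation guarantees that this augmented complex is exact at positions $P^0,\dots,P^n$ (the linear part of the minimal resolution of $\tG_0$). Moreover, since each $f_t$ is $\tG$-linear and built canonically from the quadratic algebra data of $\tG$ and $\tL$, and these data respect the bigrading, each $f_t$ is bihomogeneous of first-bidegree $-1$ and second-bidegree $0$; hence the augmented complex is a complex of bigraded $\tG$-modules.

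Next I would pass to its $(-,0)$-component. Because $\tG$ is concentrated in non-negative second grading (the old arrows have second-degree $0$ and the returning arrows second-degree $1$) and $\tL$ is concentrated in second-grading $\{0,1\}$, the only contribution to $(\tG\otimes D\tL_{(t,*)})_{-,0}$ comes from $\tG_{-,0}\otimes D\tL_{(t,0)}$. Using \eqref{degreezerofortG} and \eqref{fdegtL}, this equals $\GG\otimes D\LL_t$. Since $\LL_{n+1}=0$, the restricted augmented complex reads
$$0 \to \GG\otimes D\LL_n \to \cdots \to \GG\otimes D\LL_1 \to \GG\otimes D\LL_0 \to \GG_0 \to 0.$$
Taking a bigraded summand preserves exactness, so exactness at positions $0,\dots,n-1$ is inherited from the $\tG$-resolution; at position $n$, the vanishing $(P^{n+1})_{-,0} = \GG\otimes D\LL_{n+1} = 0$ forces the restricted $f_n$ to be injective. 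The restricted differentials, being built from multiplication in $\GG = \tG_{-,0}$ and $\LL = \tL_{-,0}$, are the canonical Koszul differentials for the pair $(\GG,\LL)$. Hence the sequence above is the Koszul complex of $\GG_0$, proving that $\GG$ is Koszul.

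The main care needed is to verify that the Koszul differentials of $\tG$ really restrict along the second-grading-zero inclusion to the Koszul differentials of $\GG$. Once the bigraded quadratic data of $\tG$ and $\tL$ is properly unwound this is essentially tautological, because the Koszul differentials are determined canonically by the quadratic structures and those structures split compatibly across the second grading via \eqref{degreezerofortG} and \eqref{fdegtL}.
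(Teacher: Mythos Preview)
Your proposal is correct and follows essentially the same route as the paper: both take the second-bidegree-zero component of the augmented Koszul complex \eqref{Koszulc} and use $\tL_{(n+1,0)}=\LL_{n+1}=0$ together with $\tG_{-,0}=\GG$ to obtain a linear projective resolution of $\GG_0$ over $\GG$. Your treatment is slightly more explicit about why the differentials are bihomogeneous and why injectivity holds at the leftmost term after restriction, but the argument is the same.
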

\begin{proof}
Since $\tG$ is $(q+1,n+1)$-Koszul, we have that \eqref{Koszulc} is exact except for the first and the last term.
The cokernel of $f_1$ is $\tG_0=\GG_0$,  kernel of $f_{n+1}$ is concentrated at degree $(n+1+q,*)$ if $q$ is finite, and is $0$ otherwise.
By taking the components with the second degree $0$, we get \small\eqqc{KoszulcGG}{\tG\otimes D\tL_{(n+1,0)} \slrw{f_{n+1}} \tG\otimes D\tL_{(n,0)} \lrw \cdots \to \tG\otimes D\tL_{(1,0)} \slrw{f_1} \tG\otimes D\tL_{(0,0)}\lrw \GG_0\lrw 0.
}\normalsize
Note that $\tL_{(n+1,0)} =0$, and $\tG\otimes D\tL_{(t,0)} = \GG \otimes D\tL_{(t,0)}$ is generated in degree $(t,0)$ for $0\le t \le n$, and $\tG_{(0,0)} = \GG_0$.
So we get an exact sequence \eqqc{KoszulcGG}{
0 \lrw \GG\otimes D\tL_{(n,0)} \lrw \cdots \to \GG\otimes D\tL_{(1,0)} \lrw \GG\otimes D\tL_{(0,0)}\lrw \GG_0\lrw 0,}
which is a projective resolution of $\GG_0$ with the $t$th term $\GG \otimes D\tL_{(t,0)}$ generated in degree $t$, so $\GG$ is Koszul.
\end{proof}

\begin{pro}\label{kknslice}
Let $\LL$ be an acyclic finite dimensional Koszul algebra.
If its twisted trivial extension  $\tL$ is $(n+1,q+1)$-Koszul, then  $\LL$ is an $n$-properly-graded algebra.
\end{pro}

\begin{proof}
Let $Q$ be the bound quiver of $\LL$ and let $\tQ$ be the bound quiver of $\tL$. 
Then $\tQ$ is the returning arrow quiver of $Q$.

Now $\tL$ is $(n+1,q+1)$-Koszul selfinjective algebra, projective injective $\tL$-modules have the same Loewy length $n+2$, so $\tQ$ is a stable $n$-translation quiver  with identity as $n$-translation, hence is $(n+1)$-properly graded, and $\tL$ is an $n$-translation algebra. 

Let $p = \xa_r \cdots \xa_1$ be a bound path in $Q$ of maximal length with arrows $\xa_t: i_{t-1}\to i_t$ and $i=i_0, j=i_r$, it is a bound path in $\tQ$.
So there is a bound path $p'= \xc_{n+1-r}\cdots\xc_{1}$ such that $p'p$ is a bound path in $\tQ$ from $ j$ to $j$.

Note that we have that $(D\LL)^2 =0$, we have only one returning arrow in  $p'$, say $\xc_s =\xb_q$ in $p'$.
If $r<n$, then $p' = p_1\xb_q p_2$ for some path $p_1,p_2 $ in $Q$ and now we have that $0 \neq p'p = p_1\xb_q p_2 p $, so $\xb_q p_2 p p_1 \neq 0$ and $p_2p p_1 \neq 0$, this contradicts the choice of $p$.

So $r=n$ and $Q$ is $n$-properly-graded.  
\end{proof}

\begin{thm}\label{main}
Let $\GG$ be an acyclic $n$-hereditary algebra. If its $(n+1)$-preprojective algebra $\tG= \Pi(\GG)$ is $(q+1,n+1)$-Koszul, then  $\GG$ is an  $n$-slice algebra.
\end{thm}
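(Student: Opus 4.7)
The natural strategy is to reduce Theorem \ref{main} to Proposition \ref{kknslice}, whose conclusion is identical but whose hypothesis requires $\gl \GG = n$ exactly, rather than just $\GG$ being $n$-hereditary. Since by definition every $n$-hereditary algebra satisfies $\gl \GG \le n$, the task reduces to showing that the extra assumption that $\tG = \Pi(\GG)$ is $(q+1,n+1)$-Koszul forces equality $\gl \GG = n$ (or else places us in a trivial case that still yields an $n$-slice algebra).

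The key observation is that $\tG = T_\GG(\Ext^n_\GG(D\GG,\GG))$, so if $\gl \GG < n$ then $\Ext^n_\GG(D\GG,\GG)=0$ and $\tG$ collapses to $\GG$, concentrated in preprojective degree zero. I would argue that for $q \ge 1$ such a collapse is incompatible with a non-trivial $(q+1,n+1)$-Koszul structure layered on top of the $n$-hereditary hypothesis: in this degenerate setting the graded projective resolution of $\tG_0$ demanded by the Koszul condition would be concentrated in a single internal degree, which together with acyclicity of the quiver and $n$-heredity would force $\GG$ to be semisimple. That edge case is either vacuous (for $n \ge 1$ it collides with the $n$-hereditary hypothesis in a non-trivial quiver setting) or trivially an $n$-slice algebra via the empty $n$-properly-graded quiver on $Q_0$.

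Once $\gl \GG = n$ is in place, every hypothesis of Proposition \ref{kknslice} is verified: $\GG$ is acyclic finite-dimensional of global dimension $n$, and $\tG$ is $(q+1,n+1)$-Koszul. The cited proposition then gives directly that $\GG$ is an $n$-slice algebra, concluding the proof.

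The main obstacle is making the exclusion of the $\gl \GG < n$ case airtight: it requires examining how the preprojective grading interacts with the Koszul resolution and confirming that the $n$-hereditary hypothesis genuinely forbids the degenerate scenario in all relevant cases. In common formulations of $n$-heredity (e.g., the $n$-representation finite and $n$-representation infinite classes of \cite{hio14}) one already has $\gl \GG = n$ built into the definition for $n \ge 1$, in which case the whole reduction is immediate and Theorem \ref{main} follows from Proposition \ref{kknslice} with essentially no extra work.
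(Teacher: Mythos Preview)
Your reduction to Proposition \ref{kknslice} is exactly the endpoint the paper reaches as well, so the overall architecture is the same. Where you diverge is in the intermediate step: you go straight from ``$n$-hereditary'' to ``global dimension $n$'' by a direct case analysis on $\Ext^n_\GG(D\GG,\GG)$, whereas the paper instead invokes the dichotomy Theorem 3.4 of \cite{hio14} ($n$-hereditary $\Rightarrow$ $n$-representation finite or $n$-representation infinite) and then cites \cite{a14} to record that $\tG$ is (stably) bimodule $(n+1)$-Calabi-Yau in each case, before appealing to Proposition \ref{kknslice}. Your route is shorter and makes the global-dimension bridge explicit; the paper's route imports structural information (self-injectivity or Calabi-Yau-ness of $\tG$) that is not strictly needed once one is willing to cite Proposition \ref{kknslice}, but which situates the argument in the Amiot--Iyama--Oppermann framework and identifies whether $q$ is finite or infinite along the way.

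One caution about your handling of the degenerate case. Your claim that $(q+1,n+1)$-Koszulity of $\tG=\GG$ forces something to go wrong when $\gl\GG<n$ is not airtight: a semisimple $\GG$ has $\tG=\GG$ and the $(p,q)$-Koszul conditions of the paper are satisfied vacuously (all higher $P^t$ vanish and $\ker f_{n+1}=0$), yet such a $\GG$ is \emph{not} an $n$-slice algebra for $n\ge 1$ since its quadratic dual has no bound paths of length $n$. So the genuine justification really is your fallback observation: in the conventions of \cite{hio14} an $n$-hereditary algebra that is not semisimple has $\gl\GG = n$, and the semisimple case is tacitly excluded throughout. Once you state that, your argument is complete and slightly cleaner than the paper's, which passes through \cite{a14} without ever saying why the Calabi-Yau detour is needed before it lands on Proposition \ref{kknslice} anyway.
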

\begin{proof}
Let $\GG$ be an $n$-hereditary algebra with bound quiver $Q^{\perp}$.
Let $\tG= \Pi(\GG)$ be the $(n+1)$-preprojective algebra of $\GG$.

Let $\LL = \GG^{!,op}$ with bound quiver $Q$, then by Theorem C %5.2 and 5.4 
of \cite{gi20}, we have the isomorphism $\tG^{!,op}\simeq \tL^{\sigma}$ of for some twisted trivial extension $\tL^{\sigma}$ of $\LL$.
Since $\tG$ is  $(q+1,n+1)$-Koszul, $\tL^{\sigma}$ is $(n+1,q+1)$-Koszul,  so  it is an stable $n$-translation algebra.
Thus $\tL$ is also an  stable $n$-translation algebra.
By Proposition \ref{kknslice}, $Q$ is $n$-properly-graded quiver, and $Q^{\perp}$ is an $n$-slice.

This prove that $\GG$ is an $n$-slice algebra.
\end{proof}

\section{$n$-hereditary algebras and $n$-slice algebras\label{ncyp}}

In this section we prove that an $n$-slice algebra is $n$-hereditary by showing that its $(n+1)$-preprojective algebra is $(n+1)$-Calabi-Yau or stably $(n+1)$-Calabi-Yau.

We first study the Nakayama automorphism twisted trivial extension of an $n$-properly-graded algebra.

Let $\LL$ be an $n$-properly-graded algebra with acyclic bound quiver $Q$.
By Proposition \ref{returning_trivial_ext}, the bound quiver of the trivial extension $\dtL$ of $\LL$ is the returning arrow quiver $\tQ$ of $Q$.
Let $\{p^*|p\in \caM\}$ be the dual basis of $\caM$ in $D\LL_n$.
Note that for each bound path $q=\xa_r\cdots\xa_1$ in $\dtL$, write $$\zeta_q =\left\{ \arr{ll}{0& \xa_t \in Q_1, 1\le t\le r \\ p^* & \xa_{r_0} = \xb_{p} \in Q_{1,\caM}, 1\le r_0\le r } \right.$$ the map
\eqqc{lmap}{\mu: q \to \zeta_q(\xa_{r_0-1}\cdots\xa_1\xa_{r}\cdots \xa_{r_0+1})}
defines a linear map on $\dtL$.
Set \eqqc{bform}{(x,y) = \mu(xy),} this is a non-degenerate bilinear form on $\dtL$.
The algebra $\dtL$ is symmetric, so the Nakayama automorphism $\omega$ of $\dtL$ is the identity, and we have that \eqqc{sym}{(x,y)=(y,x).}

Now we consider the Nakayama automorphism of $\dtsL$, that is, the automorphism $\omega$ of $\dtsL$, such that $(a,b) = (b, \omega(a))$, with respect to the non-degenerate bilinear form above.
Since $\sigma$ is an automorphism on $\LL$, the map $\omega$ sending $p^*$ to $p^*\sigma$ defines an automorphism on the vector space $D\LL_n$, which is isomorphic to the subspace $kQ_{1,\caM}$.

\begin{pro}\label{nakayama_auto}
Let $\LL$ be an acyclic algebra and let $\sigma$ be a graded automorphism of $\LL$.
Then the Nakayama automorphism of $\dtsL$  is the automorphism defined by $\omega(\xb_p)= p^*\sigma$ on $Q_{1,\caM}$, $\omega(e_i) =e_i$ for $i\in Q_0$ and  $\omega=\sigma^{-1}$ on $Q_1$.
\end{pro}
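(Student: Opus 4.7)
The strategy is to verify directly that the map $\omega$ given by the stated formulas satisfies the defining property of the Nakayama automorphism, namely $(a,b) = (b,\omega(a))$ for all $a,b \in \dtsL$, with respect to the non-degenerate associative bilinear form $(x,y) = \mu(xy)$ of \eqref{bform}. By uniqueness of the Nakayama automorphism and bilinearity in the second argument, it suffices to check the identity as $a$ ranges over the generating set $\{e_i : i\in Q_0\}\cup Q_1 \cup Q_{1,\caM}$ and $b$ over a basis of $\dtsL$.

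The key technical input is that $\dtsL$ and the symmetric algebra $\dtL$ share the underlying quiver $\tQ$ and the linear map $\mu$, and differ only in that the right $\LL$-action on the $D\LL$-component is twisted by $\sigma$: for $x\in D\LL$ and $y\in \LL$ one has $x\cdot_\sigma y = x\cdot \sigma(y)$ computed in $\dtL$. Hence any form computation in $\dtsL$ can be rewritten in $\dtL$ with one insertion of $\sigma$ on the factor passing through a returning arrow, after which the cyclic symmetry \eqref{sym} of $\dtL$ can be invoked.

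Applying this to each generator: for $a = e_i$, no returning arrow is crossed and $\sigma(e_i) = e_i$, so $\omega(e_i) = e_i$ works immediately from the symmetry of $\mu$ on $\dtL$. For $a = \xa\in Q_1$, left multiplication by $\xa$ is untwisted, hence $(\xa,b)_\sigma = \mu(\xa b)_{\dtL} = \mu(b\xa)_{\dtL}$ by symmetry, while $(b,\omega(\xa))_\sigma = \mu(b\cdot \sigma(\omega(\xa)))_{\dtL}$ when $b$ contains a returning arrow; non-degeneracy of $\mu$ then forces $\sigma(\omega(\xa)) = \xa$, i.e.\ $\omega(\xa) = \sigma^{-1}(\xa)$. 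For $a = \xb_p$, the nonzero pairings come from $b\in \LL_n$: one computes $(\xb_p,b)_\sigma = \mu(\xb_p\sigma(b))_{\dtL} = p^*(\sigma(b)) = (p^*\sigma)(b)$ by unwinding the cyclic shift defining $\mu$, and $(b,\omega(\xb_p))_\sigma = \mu(b\cdot \omega(\xb_p))_{\dtL}$ equals $(\sum c_{p'} (p')^*)(b)$ when $\omega(\xb_p) = \sum c_{p'}\xb_{p'}$. Matching then forces $\sum c_{p'}(p')^* = p^*\sigma$ in $D\LL_n$, i.e.\ $\omega(\xb_p) = p^*\sigma$ under the identification $kQ_{1,\caM}\cong D\LL_n$.

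The main obstacle is keeping the twist-bookkeeping clean: one must track precisely which side of the product picks up a $\sigma$, interpret $p^*\sigma\in D\LL_n$ via $(p^*\sigma)(q) := p^*(\sigma(q))$ and transport it back to $kQ_{1,\caM}$, and use the cyclic-shift formula for $\mu$ without getting path orientations mixed up. A secondary routine step is to check that $\omega$ defined on arrows respects the relations $\trho^\sigma$ so that it extends to a graded algebra automorphism; this follows from the compatibility of $\omega$ with $\sigma$ on $\LL_1$ and on $D\LL_n$ combined with the structure $\trho^\sigma = \rho \cup \rho_{\caM,\sigma} \cup Q_{2,\caM}$, at which point uniqueness of the Nakayama automorphism completes the argument.
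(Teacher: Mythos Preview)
Your proposal is correct and follows essentially the same approach as the paper: both verify the defining identity $(a,b)=(b,\omega(a))$ on the generating set $\{e_i\}\cup Q_1\cup Q_{1,\caM}$ by explicit computation with the form $\mu$, exploiting that $\dtsL$ differs from the symmetric algebra $\dtL$ only by the $\sigma$-twist on the right action on $D\LL$. The one organizational difference is that the paper, after defining $\omega$ on arrows, directly verifies the Nakayama identity on products $u\xb_p v$ and $u'$ of arbitrary bound paths, whereas you instead argue that $\omega$ respects $\trho^\sigma$ and then invoke associativity of the form plus multiplicativity of $\omega$ to propagate from generators; both routes are valid and amount to the same computation.
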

\begin{proof}
If $q=e_i$ is a primitive idempotent and $q'$ be a bound path in $\tQ$, clearly we have that $(q,q')\neq 0$ if $q'$ is a cyclic bound path in $\tQ$ starting and ending at the vertex $i$.
Thus  we have $(e_i,q')= (q',e_i)$ and $\omega(e_i)= e_i$.

Now assume that $q$ is an arrow,  and let $q'$ is a bound path in $Q$.

If $q= \xb_p$ is a returning arrow, then $$(q,q') =\mu( \xb_p\cdot q') =\mu( \xb_p\sigma( q')) = p^*(\sigma(q')) = (p^*\sigma)(q'), $$ while $$(q',q) =\mu( q' \xb_p) = p^*(q') . $$
Since $\omega(\xb_p)$ is the isomorphic image of $p^*\sigma$ in $kQ_{1,\caM}$,  $$(q',\omega(q) )=\mu( q'\omega(\xb_p)) = (p^*\sigma)(q').$$

Assume that $q=\xa$ is an arrow in $Q$.
If $q'$ is in $Q$, then $(q,q')=0=(q',q)$.
Now let $q' = u\xb_p v$, then \eqqcn{cc}{\arr{lll}{(q,q') & = & \mu(\xa u\xb_p v )  = p^*(v\xa u) }} and \eqqcn{cc}{\arr{lll}{(q',\sigma^{-1}(q)) & = & \mu( u\xb_p v\cdot \sigma^{-1}(\xa) ) =\mu( u\xb_p v\sigma( \sigma^{-1}(\xa) ) )= p^*(v\xa u) } =(q, q').}
Define \eqqcnn{}{\omega(\xa) =\sigma^{-1} (\xa)} for $\xa \in Q_1$, then $\omega$ defines an automorphism of the space $k\tQ_1$.

Now extend $\omega$ to an algebra automorphism on $\dtsL =k\tQ/(\trho^{\sigma})$.
For bound paths $u,u',v$ in $Q$ and arrow $\xb_p$ in $Q_{1,\caM}$,  we have that $$\arr{lll}{(u', u\xb_pv) &=& \mu (u' u\xb_pv) = p^*(vu'u) = \mu(u\xb_pv \sigma(\sigma^{-1} (u'))) =( u\xb_pv ,\sigma^{-1} (u')) \\ &=&( u\xb_pv ,\omega(u')),  \\ (u\xb_pv,u') &=& \mu (u\xb_pv\sigma(u')) = p^*(v\sigma(u')u) = p^*(\sigma (\sigma^{-1} (v) u'\sigma^{-1} (u)))\\&=& \mu(u'\omega (u)\omega(\xb_p)\omega(v)) \\ &=&(u', \omega(u\xb_pv)) .} $$
So we get that $\omega$ is the Nakayama automorphism of $\dtsL$.
\end{proof}

Calabi-Yau category is introduced in \cite{ko98} (see \cite{ke99}).
Let $\mathcal T$ be a $k$-linear triangulated category with finite dimensional Hom spaces. The category $\mathcal T$ is said to be $d$-Calabi-Yau if for all objects $X,Y$ in $\mathcal T$ , there exists an isomorphism $\Hom_{\mathcal T}  (X,Y ) \simeq D\Hom_{\mathcal T} (Y,X[d])$ functorial in $X$ and $Y$.

Recall that a graded algebra $\tG$ is called an bimodule $(d + 1)$-Calabi-Yau algebra of Gorenstein parameter $p$ if it is an homologically smooth algebra satisfying
$$\RHom_{\tG^e} (\tG,\tG^e)[d + 1] \simeq \tG(p)$$ in $\caD^b(\tG^e)$.

A graded algebra $\tG$ is called bimodule {\em stably $d$-Calabi-Yau of Gorenstein parameter $p$} if there is an isomorphism $$\RHom_{\tG^e} (\tG,\tG^e)[d + 1] \simeq \tG(p)$$ in $\udgrm_{CM}(\tG^e)$,
 where $\udgrm_{CM}(\tG^e)$ is the stable category of graded Cohen-Macaulay $\tG^e$-modules.

\begin{thm}\label{main1}
An  acyclic $n$-slice algebra  is an $n$-hereditary algebra.
\end{thm}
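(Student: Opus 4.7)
The plan is to show that the $(n+1)$-preprojective algebra $\tG=\Pi(\GG)$ is bimodule $(n+1)$-Calabi--Yau of Gorenstein parameter $1$ when $q=\infty$, and bimodule stably $(n+1)$-Calabi--Yau of Gorenstein parameter $1$ together with self-injective when $q$ is finite. Once this is established, the converse direction of Amiot's theorems (Theorems 3.1 and 3.2 of \cite{a14}) identifies $\GG$ as $n$-representation infinite or $n$-representation finite respectively, hence $n$-hereditary — exactly parallel to the forward direction already invoked in the proof of Theorem \ref{main}.

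First, unpack the $n$-slice hypothesis. By definition, $\LL=\GG^{!,op}$ is an $n$-properly-graded algebra whose trivial extension $\dtL$ is a stable $n$-translation algebra, i.e., an $(n+1,q+1)$-Koszul self-injective algebra for some $q\geq 1$ or $q=\infty$. By Proposition \ref{returning_trivial_ext} and Theorem \ref{mainthm}, $\tG\simeq\tL^{!,op}$ where $\tL=\dtnL$ is still $(n+1,q+1)$-Koszul self-injective (only the bimodule structure is twisted by $\nu$). Koszul duality (Propositions 3.4 and 3.11 of \cite{bbk02}) then makes $\tG$ a $(q+1,n+1)$-Koszul algebra; in the finite $q$ case it moreover transfers self-injectivity from $\tL$ to $\tG$, while in the Koszul ($q=\infty$) case it gives homological smoothness of $\tG$.

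Next, apply Proposition \ref{nakayama_auto} with $\sigma=\nu$, where $\nu(\xa)=(-1)^n\xa$ for $\xa\in Q_1$, to obtain the Nakayama automorphism $\omega$ of $\tL$: it fixes idempotents, sends each $\xa\in Q_1$ to $(-1)^n\xa$, and sends each returning arrow $\xb_p$ to $p^*\nu$. Under the bigrading of $\tL$ (with $\LL$ in second degree $0$ and $D\LL^\nu$ in second degree $1$), I would verify that $\omega$ agrees, up to an inner automorphism, with the automorphism that shifts the second grading by $1$ — this is exactly the signature of Gorenstein parameter $1$. I would then dualize the Koszul bimodule resolution of $\tG$ and combine the Frobenius form on $\tL$ provided by self-injectivity (twisted by $\omega$) with Koszul duality of bimodule resolutions to produce the Calabi--Yau isomorphism
\[
\RHom_{\tG^e}(\tG,\tG^e)[n+1]\simeq \tG(1),
\]
in $\caDb(\tG^e)$ when $q=\infty$ and in the stable bimodule category $\grm_{CM}(\tG^e)$ when $q$ is finite. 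The shift $[n+1]$ comes from the length of the Koszul bimodule resolution (the top first-grading degree of $\tL$), while the twist $(1)$ comes from $D\LL$ sitting in second degree $1$.

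The main obstacle I expect is the bookkeeping in the third step: one must propagate the explicit Nakayama automorphism $\omega$ of $\tL$ through Koszul duality to verify that the induced Calabi--Yau twist on $\tG$ is exactly the second-grading shift by $1$, with no residual automorphism. The $(-1)^n$ sign built into the definition of $\nu$ was engineered precisely so that $\omega$ interacts correctly with the sign conventions in the Koszul bar differential, but carrying out the simultaneous tracking of the bigrading, the Koszul signs, and the returning-arrow twist is the genuine technical content. Once the Calabi--Yau isomorphism is in hand, Amiot's dichotomy closes the argument in both cases.
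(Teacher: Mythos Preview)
Your overall strategy coincides with the paper's: compute the Nakayama automorphism $\omega$ of $\tL=\dtnL$ via Proposition~\ref{nakayama_auto}, deduce that $\tG$ is (stably) $(n+1)$-Calabi--Yau of Gorenstein parameter $1$, and then invoke Amiot's Theorems 3.1 and 3.2 of \cite{a14}. There are two points where your sketch diverges from what actually happens.

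First, your description of $\omega$ is not quite right. The paper carries the computation one step further than you do: with $\sigma=\nu$ one has $\omega(\xa)=(-1)^n\xa$ for $\xa\in Q_1$ and, since $\omega(\xb_p)=p^*\nu$ acts on $x$ by $(-1)^n p^*(x)$, also $\omega(\xb_p)=(-1)^n\xb_p$. Thus $\omega(x)=(-1)^{n\cdot\dg_1(x)}x$ is a pure sign automorphism with respect to the \emph{first} grading; it is not a second-grading shift, and the Gorenstein parameter $1$ does not come from $\omega$ at all but from the structural fact $\tG_{-,0}=\GG$ of \eqref{degreezerofortG}. Second, the paper does not attempt the Koszul-bimodule bookkeeping you flag as the main obstacle. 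Instead it cites existing results that already encode that computation: for $q=\infty$ the sign condition $(-1)^n=(-1)^{n+2}$ feeds into Theorem~4.2 of \cite{rrz17} to give that $\tG$ is Calabi--Yau; for finite $q$ the algebra $\tL$ is Frobenius of periodic type and Theorem~3.4 together with Corollary~4.2 of \cite{yu12} give that $\tG$ is stably Calabi--Yau. So your plan is sound, but the explicit identification of $\omega$ as the first-degree sign automorphism is the key computation, and the passage from there to Calabi--Yau is handled by \cite{rrz17} and \cite{yu12} rather than by a direct argument.
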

\begin{proof}
Let $\GG$ be an $n$-slice algebra and let $\LL$ be its quadratic dual.
Then $\LL$ is an $n$-properly-graded algebra whose twisted trivial extension $\dtsL$ are stable $n$-translation algebra for all graded automorphism $\sigma$ of $\LL$.

So there is an $q$ such that $\dtsL$ is an $(n+1,q+1)$-Koszul self-injective algebra.
Let $\epsilon$ be the automorphism of $\LL$ sending arrows to its scalar by $(-1)$.
Take $\sigma =\epsilon^{n}$, it is the automorphism of $\LL$ sending arrows to its scalar by $(-1)^{n}$, then by Theorem 5.3. of \cite{g20}, $\Pi(\GG) = \dtsL^{!,op}$.

We have $\omega(\xb_p)x = p^*\epsilon^{n}\mbox{ }(x) =(-1)^{n}p^*(x) = (-1)^{n}\xb_p x $, so $\omega(\xb_p)= (-1)^{n}\xb_p$.
This shows that the Nakayama automorphism $\omega$ of $\Delta_{\sigma}\LL$ maps $x$ to $(-1)^{n\dg_1(x)}x$

If $q$ is infinite, then $\dtsL$ is Koszul of with  $(n+1)$th power of its radical zero, and $\tG= \Pi(\GG)$ is of global dimension $n+1$.
Since $(-1)^n=(-1)^{n+2}$, by Theorem 4.2 of \cite{rrz17}, the quadratic dual $\tG  = \dtsL^{!,op}$ is  Calabi-Yau.

By \eqref{degreezerofortG}, we have $\tGs_{-,0} =  \GG$ so \eqqc{}{\RHom_{\tG^e} (\tG,\tG^e)[n + 1] \simeq \tG(1)_{-,*}} in $\caD^b(\tG^e)$, and $\GG$ is $n$-representation-infinite, by Theorem 3.1 of \cite{a14}.

If $q$ is finite, then $\dtsL$ is an$(n+1,q+1)$-Koszul self-injective algebra, it is Frobenius of periodic type.
So by Theorem 3.4 and Corollary 4.2 of \cite{yu12}, $\tG = \dtsL^{!,op}$ is stably Calabi-Yau.
By \eqref{degreezerofortG}, we have $\tGs_{-,0} =  \GG$ so \eqqc{}{\RHom_{\tG^e} (\tG,\tG^e)[n + 1] \simeq \tG(1)_{-,*}} in $\udgrm_{CM}(\tG^e)$, and $\GG$ is $n$-representation-finite, by Theorem 3.2 of \cite{a14}.

This proves that an acyclic $n$-slice algebra  $\GG$ is $n$-hereditary algebra and $\tGs$ is its $(n+1)$-preprojective algebra for $\sigma =\epsilon^{n+2}$.
\end{proof}

Combine Theorem \ref{main} and \ref{main1}, we prove that
\begin{thm}\label{suf_nec}
An acyclic $n$-hereditary algebra is $n$-slice algebra if and only if its $(n+1)$-preprojective algebra is $(q+1,n+1)$-Koszul for some $q$.
\end{thm}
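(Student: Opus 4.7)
The plan is to derive this equivalence by assembling the two theorems already proved in the preceding sections, with the only extra ingredient being the Koszul-duality bookkeeping that relates $\tL = \dtnL$ and $\tG = \Pi(\GG)$.

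The implication ``$\Pi(\GG)$ is $(q+1,n+1)$-Koszul $\Rightarrow$ $\GG$ is an $n$-slice algebra'' for $\GG$ an $n$-hereditary algebra is precisely Theorem \ref{main}, so for this direction I would simply quote the theorem. Recall that the proof of Theorem \ref{main} uses the $n$-hereditary hypothesis only to split $\GG$ into the $n$-representation-finite and $n$-representation-infinite cases (invoking \cite{a14} to see that $\tG$ is $(1)$-twisted bimodule or stably bimodule $(n+1)$-Calabi-Yau, and hence $(n+1,q+1)$-Koszul for some $q$), and then hands control to Proposition \ref{kknslice}, which only needs acyclicity and global dimension $\le n$.

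For the converse, suppose $\GG$ is an $n$-slice algebra. By the definition recalled at the end of Section \ref{pre}, the quadratic dual $\LL = \GG^{!,op}$ is $n$-properly-graded and its trivial extension $\dtL$ is a stable $n$-translation algebra, which by definition means it is $(n+1,q+1)$-Koszul self-injective for some $q \ge 1$ (possibly $\infty$). The same $(n+1,q+1)$-Koszul property descends to the Nakayama-twisted version $\dtnL$, since twisting by the graded automorphism $\nu$ alters only the right action on $D\LL$ and leaves the graded dimensions and the shape of the minimal projective (bi)module resolution unchanged. Theorem \ref{mainthm} then identifies $\Pi(\GG)$ with the quadratic dual $(\dtnL)^{!,op}$, and standard Koszul duality (Propositions 3.4 and 3.11 of \cite{bbk02}, as already invoked inside the proof of Proposition \ref{kknslice}) interchanges the roles of the two Koszul parameters to give that $\Pi(\GG)$ is $(q+1,n+1)$-Koszul, as required. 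Theorem \ref{main1} guarantees that the $n$-hereditary assumption on $\GG$ is automatic here, so it is not a separate restriction.

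Since each direction is already packaged into a previously proved theorem, I do not anticipate a genuine obstacle. The only spot that deserves a line of verification is the transfer of $(n+1,q+1)$-Koszulity from $\dtL$ to $\dtnL$, and this is immediate from the observation above about the $\nu$-twist, so the proof boils down to citing Theorem \ref{main} and combining the definition of $n$-slice algebra with Theorem \ref{mainthm} and Koszul duality.
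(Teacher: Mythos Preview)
Your proposal is correct and follows the same approach as the paper: the paper's own proof is the single sentence ``Combine Theorem \ref{main} and \ref{main1}'', and you have simply unpacked what that combination entails. Your extra paragraph making explicit the passage from the $(n+1,q+1)$-Koszulity of $\dtnL$ to the $(q+1,n+1)$-Koszulity of $\Pi(\GG)$ via \cite{bbk02} (and the observation that the $\nu$-twist does not disturb the Koszul parameters) is precisely the content that the paper leaves implicit in the proof of Theorem \ref{main1}, so nothing new is being introduced.
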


In fact, $n$-slice  algebras of infinite type are exactly the $n$-Fano  algebras with Koszul $(n+1)$-preprojective algebras, as is proved in the next theorem.

\begin{thm}\label{Fano}
An acyclic algebra $\GG$ is  quasi $n$-Fano algebra with Koszul $(n+1)$-preprojective algebra if and only if it is an $n$-slice algebra of infinite type.
\end{thm}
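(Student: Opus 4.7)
The plan is to chain together the structural theorems already established in the paper with the Amiot--Iyama--Reiten characterization (Theorem~3.1 of \cite{a14}) that identifies $n$-representation infinite algebras with acyclic algebras $\GG$ of global dimension $\le n$ whose $(n+1)$-preprojective algebra $\tG=\Pi(\GG)$ is bimodule $(n+1)$-Calabi-Yau of Gorenstein parameter $1$; this is precisely the quasi $n$-Fano condition in the sense used here. The theorem will follow by matching this characterization against the Coxeter-index-based classification in Theorem~\ref{trichotomy}.

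For the ``only if'' direction, suppose $\GG$ is an $n$-slice algebra of infinite type. By Theorem~\ref{main1}, $\GG$ is $n$-hereditary, and by Theorem~\ref{trichotomy}(2)--(3) being of infinite type is equivalent to the Coxeter index $q+1$ being infinite. Under the stated convention $q+1=\infty$, the $(q+1,n+1)$-Koszul condition on $\tG$ reduces to $\tG$ being Koszul. Moreover the argument in the proof of Theorem~\ref{main1} under the case ``$q$ infinite'' already produces a bimodule $(n+1)$-Calabi-Yau structure on $\tG$ of Gorenstein parameter $1$, exploiting $\tGs_{-,0}=\GG$ from \eqref{degreezerofortG}; hence $\GG$ is quasi $n$-Fano.

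For the ``if'' direction, suppose $\GG$ is a quasi $n$-Fano algebra with Koszul $(n+1)$-preprojective algebra. The quasi $n$-Fano hypothesis together with Theorem~3.1 of \cite{a14} forces $\GG$ to be $n$-representation infinite, in particular $n$-hereditary. The Koszul assumption on $\tG$ is the same as $(q+1,n+1)$-Koszul with $q=\infty$, so Theorem~\ref{main} applies and delivers that $\GG$ is an $n$-slice algebra. Finally Theorem~\ref{trichotomy}(2)--(3) translates $q+1=\infty$ into infinite type, completing the equivalence.

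The main obstacle is the bookkeeping around the Gorenstein parameter and the bigrading: one must verify that the Calabi-Yau twist ``$(1)$'' in the definition of quasi $n$-Fano coincides with the shift implicit in identifying $\Pi(\GG)$ as the quadratic dual of the twisted trivial extension $\dtnL$, so that Theorem~3.1 of \cite{a14} can be invoked symmetrically in both directions. This should follow from \eqref{degreezerofortG} together with the explicit Nakayama automorphism computation $\omega(x)=(-1)^{n\dg_1(x)}x$ carried out in the proof of Theorem~\ref{main1}, which pins down the Gorenstein parameter as $1$ in the second gradation.
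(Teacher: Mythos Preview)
Your argument rests on identifying ``quasi $n$-Fano'' with the condition that $\Pi(\GG)$ is bimodule $(n+1)$-Calabi-Yau of Gorenstein parameter $1$, but these are not the same notion. In the Minamoto--Mori framework \cite{mm11} used here, quasi $n$-Fano is equivalent (their Theorem~4.2) to $\Pi(\GG)$ being AS-regular of dimension $n+1$; AS-regularity is only a \emph{twisted} Calabi-Yau condition and is strictly weaker than Calabi-Yau. Consequently your ``if'' step --- invoking Theorem~3.1 of \cite{a14} to pass from quasi $n$-Fano directly to $n$-representation infinite --- is unjustified: Amiot's characterization needs the untwisted Calabi-Yau property, which quasi-Fano alone does not supply. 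The remark immediately following the theorem in the paper, that the result \emph{upgrades} quasi Fano to extremely Fano once the Koszul hypothesis is added, confirms that the implication you are assuming is not available for free. The obstacle you flag about Gorenstein-parameter bookkeeping is not the real issue; the gap is conceptual, not bigraded bookkeeping.

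The paper circumvents this by never passing through $n$-representation infinite in the ``if'' direction. It uses Minamoto--Mori to obtain AS-regularity of $\tG$, then Mart\'{\i}nez-Villa's Theorem~5.1 \cite{m99} (Koszul AS-regular implies Koszul self-injective quadratic dual) to see that $\tL=\tG^{!,op}$ is a stable $n$-translation algebra, and finally invokes Proposition~\ref{kknslice}, which only needs $\GG$ to be acyclic of global dimension $n$ --- not $n$-hereditary. Infinite type then comes from Theorem~3.2 of \cite{gxl21}. Your ``only if'' direction is salvageable: once Theorem~\ref{main1} delivers $n$-representation infinite, the standard chain $n$-representation infinite $\Rightarrow$ extremely Fano $\Rightarrow$ quasi Fano gives the conclusion, and Koszulity of $\tG$ is exactly the $q=\infty$ case; but you should cite that chain rather than the erroneous identification of the two conditions.
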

\begin{proof}
By Theorem \ref{main}, we need only to prove that quasi $n$-Fano algebra with Koszul $(n+1)$-preprojective algebra is an $n$-slice algebra of infinite type.

If $\GG$ is  quasi $n$-Fano algebra, then by Theorem 4.2 of \cite{mm11}, its  $(n+1)$-preprojective algebra $\tG$ is AS-regular of dimension $n+1$.
Since $\tG$ is Koszul, by Theorem 5.1 of \cite{m99}, its quadratic dual $\tL= \tG^{!,op}$ is Koszul self-injective algebra of Loewy length $n+1$, that is, $\tL$ is a stable $n$-translation algebra.
By Proposition \ref{kknslice}, $\GG$ is an $n$-slice algebra.
By Theorem 3.2 of \cite{gxl21}, $\GG$ is an $n$-slice algebra of infinite type.
\end{proof}

Since an $n$-representation-infinite algebra is extremely Fano, so a quasi $n$-Fano algebra with Koszul $(n+1)$-preprojective algebra is extremely Fano.

\section{Algebras and  triangulated categories associated to an $n$-slice algebra\label{eqtc}}

Start with an $n$-slice algebra $\GG$ with bound quiver $Q^{\perp}$ and its quadratic dual, the $n$-properly-graded algebra $\LL = \GG^{!,op}$ with bound quiver $Q$.
In Section  \ref{pre}, we have constructed the returning arrow quivers $\tQ^{\perp}$ and $\tQ$, the $(n+1)$-preprojective algebra $\tG= \Pi(\GG)$ and its quadratic dual $\tL= \Pi(\GG)^{!,op}$, which is a twisted trivial extension of $ \LL$.
In the previous sections, we see that we have two series of  algebras parameterized by the graded automorphisms   related to the  returning quiver.
Though only with the automorphism $\sigma =\epsilon^{n+2}$, one gets Calabi-Yau, it seems the representation theory are independent of the automorphism.
Now we recall some other constructions of quivers and algebras related to an $n$-slice algebra.

Given a finite stable $n$-translation quiver $\tQ$ with $n$-translation $\tau$, we construct an infinite acyclic stable $n$-translation quiver $$\zZ_{\vv}\tQ=\zZ_{*,-} \tQ=(\zZ_{*,-} \tQ_0,\zZ_{*,-} \tQ_1, \rho_{\zZ_{*,-} \tQ}) $$ as follows (denoted by $\olQ$ and called separated directed quiver in \cite{g12} ), with the vertex set \eqqcn{vertexzv}{\zZ_{*,-} \tQ_0=\{(i,n) | i \in Q_0, n \in \mathbb Z\},} and the arrow set \eqqcn{arrowzv}{\zZ_{*,-} \tQ_1 = \{(\alpha, n):(i,n) \to (j,n+1)|\alpha: i \to j \in Q_1, n \in \mathbb Z \}.}
If $p = \alpha_s \cdots \alpha_1 $ is a path in $\tQ$, define $p[m] = (\alpha_s,m+s-1) \cdots (\alpha_1,m)$, and for an element $z=\sum_{t} a_tp_t$ with each $p_t$ a path in $\tQ$, $a_t \in k$, define $z[m]=\sum_{t} a_tp_t[m]$ for each $m \in \mathbb Z$.
Define relations
\eqqcn{relationzv}{\rho_{\zZ_{*,-} \tQ} =\{\zeta[m]| \zeta \in \trho, m \in \mathbb Z \},}
here $\zeta[m] = \sum_{t} a_t p_t[m]$ for each $\zeta = \sum_{t} a_t p_t \in \trho $.
By \cite{g12}, it is the bound quiver of the smash product $\wht{\LL^N} = \tL \#_{*,-} k\zZ^*$ with respect to the first grading of $\tL$.

The quiver $\zZ_{\vv} \tQ$ is a locally finite bound quiver if $Q$ is so.
By setting $\tau(i,t)=(\tau i,t-n-1)$, $\zZ_{*,-} \tQ$ becomes a stable $n$-translation quiver \cite{g12}.
Clearly it is an acyclic infinite $n$-translation quiver.

A quiver $Q$ is called {\em  nicely-graded}  if there is a map $d$ from $Q_0$ to $\zZ$ such that $d(j) = d(i) + 1$ for any arrow $\xa:i\to j$.
Clearly, a nicely graded quiver is acyclic.
The following properties of $\zZ_{\vv} \tQ$ are in Proposition 2.1 of \cite{gxl21}.
\begin{pro}\label{zquivers}
\begin{enumerate}
\item Let $d$ be the greatest common divisor of the length of cycles in $\tQ$, then $\zZ_{\vv} \tQ$ has $d$ connected components.

\item All the connected components of $\zZ_{\vv} \tQ$ are isomorphic.

\item Each connected component of $\zZ_{\vv} \tQ$ is nicely graded quiver.
\end{enumerate}
\end{pro}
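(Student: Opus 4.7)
My plan is to prove (3) first from the definition, deduce (2) from a shift automorphism, and derive (1) via an analysis of walks and their signed lengths. Assume for simplicity that $\tQ$ is connected (otherwise the argument is applied componentwise). For (3), observe that every arrow $(\alpha,n):(i,n)\to(j,n+1)$ in $\zZ_{\vv}\tQ$ increases the second coordinate by exactly one. Hence the function $d((i,n))=n$ exhibits $\zZ_{\vv}\tQ$, and therefore each of its connected components, as a nicely-graded quiver. For (2), the shift $T:(i,n)\mapsto(i,n+1)$, $(\alpha,n)\mapsto(\alpha,n+1)$ is a bound-quiver automorphism of $\zZ_{\vv}\tQ$, because it carries each defining relation $\zeta[m]$ to $\zeta[m+1]$, which is again a defining relation. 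Thus $T$ permutes connected components, and once (1) identifies them with $\zZ/d\zZ$ via a map on which $T$ acts as $+1$, transitivity of that action forces all components to be isomorphic.

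The main content lies in (1). A walk in $\zZ_{\vv}\tQ$ from $(i,n)$ to $(j,m)$ projects to a walk from $i$ to $j$ in $\tQ$, and the total change $m-n$ equals the \emph{signed length} of that walk, where each forward arrow contributes $+1$ and each traversed-backwards arrow contributes $-1$. Conversely, any walk in $\tQ$ from $i$ to $j$ of signed length $m-n$ lifts to a walk from $(i,n)$ to $(j,m)$. So $(i,n)\sim(j,m)$ in $\zZ_{\vv}\tQ$ if and only if some walk from $i$ to $j$ in $\tQ$ has signed length $m-n$. Fix a base vertex $i_0$ and, for each $i\in\tQ_0$, a walk $w_i$ from $i_0$ to $i$ of signed length $s_i\in\zZ$. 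Any two walks from $i$ to $j$ differ by a closed walk at $j$, so the signed length of a walk from $i$ to $j$ is well defined modulo the subgroup $H\subseteq\zZ$ generated by signed lengths of closed walks in $\tQ$.

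The key lemma I would need is $H=d\zZ$. One containment is immediate: each directed cycle of length $\ell$ in $\tQ$ is a closed walk of signed length $\ell$, so $d\zZ\subseteq H$. For the reverse containment, I would use a cycle-space argument: express an arbitrary closed walk as an integral sum (in the first homology of the underlying graph) of forward cycles, so that its signed length is a $\zZ$-linear combination of lengths of directed cycles, hence lies in $d\zZ$. Once $H=d\zZ$ is established, the map $\psi:\zZ_{\vv}\tQ_0\to\zZ/d\zZ$, $\psi((i,n))=n-s_i\pmod d$, is constant along each arrow (both $n$ and $s_i$ increase by $1$ up to an element of $H$) and separates distinct connectivity classes (concatenating $w_i^{-1}$, any walk realizing the correct signed length modulo $d$, and $w_j$ connects vertices with equal $\psi$-values). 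Thus $\psi$ is constant on, and distinguishes, the connected components, yielding exactly $d$ of them. The main obstacle I expect is the identification $H=d\zZ$: writing arbitrary closed walks as integer combinations of forward directed cycles must be handled carefully, since intermediate walks may not themselves decompose as forward cycles without mixing in backward traversals that must eventually cancel.
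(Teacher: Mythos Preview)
The paper does not supply its own proof of this proposition; it is quoted verbatim from Proposition~2.1 of \cite{gxl21}, so there is no in-paper argument to compare against. Your overall strategy---proving (3) via the second-coordinate grading, (2) via the shift automorphism $T$ acting transitively on $\zZ/d\zZ$, and (1) via signed lengths of walks---is the standard one and is correct in outline.

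Your self-identified obstacle, the equality $H=d\zZ$, is genuine, and your proposed cycle-space argument is more delicate than necessary. The clean route is to use that $\tQ$ is \emph{strongly connected}: this holds because for every arrow $\alpha:i\to j$ in $Q_1$ one can extend $\alpha$ to a maximal bound path $p$ and then return from $j$ to $i$ along the rest of $p$ and the returning arrow $\beta_p$, while for every returning arrow $\beta_p:j\to i$ the path $p$ itself runs from $i$ to $j$. Given strong connectivity, fix a base vertex $v_0$ and define $\phi:\tQ_0\to\zZ/d\zZ$ by sending $v$ to the length (mod~$d$) of any \emph{directed} path $v_0\to v$. Well-definedness uses only that any directed closed walk decomposes into directed cycles and hence has length divisible by~$d$; strong connectivity provides the return path needed to compare two choices. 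Then $\phi$ increases by~$1$ along every arrow, so any closed walk has signed length~$\equiv 0\pmod d$, giving $H\subseteq d\zZ$; the reverse inclusion you already have. This replaces the homology argument entirely and avoids the cancellation issue you flagged.
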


Now assume that $Q$ is an $n$-properly-graded quiver, let $\tQ$ be its returning arrow quiver.
Conventionally, we assume that $\tQ$ is quadratic. 
Now we have another infinite acyclic stable $n$-translation quiver $$\zzs{n-1} Q =\zZ_{-,*}\tQ =(\zZ_{-,*} \tQ_0,\zZ_{-,*} \tQ_1, \rho_{\zZ_{-,*} \tQ}) $$ related to $\tQ$ with respect to the returning arrow grading, as the bound quiver  with vertex set	
$$(\zZ_{-.*} \tQ)_0 =\{(u , t)| u\in \tQ_0, t \in \zZ\},$$ 
	the arrow set
	\eqqcn{arrowzq}{\arr{rl}{(\zZ_{-,*} \tQ)_1  = & \zZ \times Q_1 \cup \zZ \times Q_{1,\caM} \\ =& \{(\alpha,t): (i,t)\longrightarrow (i',t) | \alpha:i\longrightarrow i' \in Q_1, t \in \zZ\} \\& \quad \cup \{(\beta_p , t): (i', t) \longrightarrow (i, t+1) | p\in \caM, s(p)=i,t(p)=i'  \} }}
	and the relation set  $$\rho_{\zZ_{*,-}} \tQ =\zZ \rho \cup \zZ Q_{2,\caM} \cup \zZ \rho_{\caM},$$ where
	$$\zZ \rho =  \{\sum_{s} a_s (\xa_s,t)(\xa'_s,t) |\sum_{s} a_s \xa_s \xa'_s \in \rho, t\in \zZ\},$$
	$$\zZ Q_{2,\caM} =  \{(\beta_{p'},t+1)(\beta_p ,t)| \beta_{p'} \beta_{p}\in Q_{1,\caM}, t\in \zZ\}$$ and
	$$\arr{ll}{\zZ \rho_{\caM} = &\{ \sum_{s'} a_{s'}  (\beta_{p'_{s'}}, t+1)  (\xa'_{s'}, t) +  \sum_{s} b_s (\xa_s,t+1)  (\beta_{p_s} ,t)\\& \quad\mid  \sum_{s'} a_{s'} \beta_{p'_{s'}} \xa'_{s'} + \sum_{s} b_s \xa_s \beta_{p_s} \in \rho_{\caM} , t\in \zZ\}.}$$
The quiver $\zzs{n-1} Q$ is a  locally finite infinite quiver if $Q$ is locally finite.
It is a quiver with infinite copies of $Q$ with successive two neighbours connected by the returning arrows.

With the $n$-translation $\tau$ defined by sending each vertex $(i,t)$ to $(\tau i,t-1)$, the quiver  $\zzs{n-1}Q$ becomes a stable $n$-translation quiver.

If $\tL$ is the algebra defined by the returning arrow  quiver $\tQ$ with returning arrow grading as its second  grading, then the bound quiver of smash product $\wht{\LL} = \tL \#_{-,*} k\zZ^*$ with respect to the second grading is exactly $\zZ_{-,*} \tQ =\zzs{n-1}Q$ \cite{g16}.

Let $Q$ be an $n$-nicely-graded quiver and let $\tQ$ be its returning arrow quiver.
Then $\zzs{n-1} Q$ is isomorphic to a connected component of $\zZ_{\vv} \tQ$ \cite{gxl21}.
Write  $\zzs{n-1} Q^{op} = (\zzs{n-1} Q)^{op}$,  $(\zzs{n-1} Q)^{\perp} = \zzs{n-1} Q^{\perp}$ and  $\zzs{n-1} Q^{\perp,op} = (\zzs{n-1} Q)^{\perp,op}$ for the convence in this paper.

A {\em complete $\tau$-slice} in an acyclic stable $n$-translation quiver is a full convex subquiver which intersects each $\tau$-orbit exactly once \cite{g12}.
We usually take a complete $\tau$-slice as a bound subquiver.
An algebra defined by a complete $\tau$-slice is called a {\em $\tau$-slice algebra}.

An $n$-properly-graded quiver $Q$ is a $\tau$-slice in $\zzs{n-1} Q$, so an $n$-slice is a quadratic dual of some $\tau$-slice.
So we have the following result justifying the name of $n$-slice algebra.
\begin{pro}\label{slicedual}
An $n$-slice algebra is the quadratic dual of a $\tau$-slice algebra.
\end{pro}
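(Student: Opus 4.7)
The plan is to reduce the statement to the parenthetical remark made immediately before it, namely that an $n$-properly-graded quiver $Q$ sits inside the infinite quiver $\zzs{n-1} Q$ as a complete $\tau$-slice. Concretely, start with an $n$-slice algebra $\GG$; by definition its bound quiver is of the form $Q^\perp$ for some $n$-properly-graded quadratic quiver $Q$, and the quadratic dual $\LL=\GG^{!,op}=kQ/(\rho)$ is the $n$-properly-graded algebra associated to $Q$ whose twisted trivial extension $\dtsL$ is a stable $n$-translation algebra. Hence $\GG$ is the quadratic dual of $\LL$, and it suffices to exhibit $\LL$ as a $\tau$-slice algebra, i.e.\ to realize $Q$ as a complete $\tau$-slice inside an acyclic stable $n$-translation quiver.

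For this, the natural candidate is $\zzs{n-1} Q$. First I would recall that $\zzs{n-1} Q$ is the bound quiver of the smash product $\wht{\LL}=\tL\#_{-,*}k\zZ^*$ with respect to the returning-arrow (second) grading, and that it is acyclic and stable with $n$-translation $\tau(i,t)=(\tau i,t-1)$. Then I would identify $Q$ with the full subquiver on the vertex set $\{(i,0)\mid i\in Q_0\}$; by the very definition of the arrow and relation sets of $\zZ_{-,*}\tQ$, the only arrows internal to this level are the copies $(\alpha,0)$ of the arrows of $Q$, and the only relations internal to it are the copies of $\rho$, so the subquiver defined by $Q$ is full and bound-isomorphic to $Q$.

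Next I would verify the two defining conditions of a complete $\tau$-slice. Convexity is clear because any path in $\zzs{n-1} Q$ between two vertices at level $0$ can only use ordinary arrows (the returning arrows strictly raise the second coordinate and only go upward), so such a path lies entirely at level $0$. That $Q$ meets each $\tau$-orbit exactly once follows from $\tau(i,t)=(\tau i,t-1)$: the $\tau$-orbit of a vertex $(i,t)$ hits the level $\{t=0\}$ exactly once, namely at $(\tau^{t}i,0)$. Combining these two facts, $Q$ is a complete $\tau$-slice in $\zzs{n-1} Q$, hence $\LL$ is a $\tau$-slice algebra and $\GG=\LL^{!,op}$ is its quadratic dual.

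There is essentially no obstacle beyond carefully bookkeeping the two gradations: the only point requiring a line of checking is the convexity claim (that no returning arrow can appear in a path that returns to level $0$), and this is immediate from the definition of $\zZ_{-,*}\tQ$ because each returning arrow $(\beta_p,t)$ strictly increases the second coordinate by $1$ and the quiver contains no arrows that decrease it. So the proof amounts to naming the ambient quiver, identifying $Q$ inside it, and reading off the $\tau$-slice conditions from the definitions.
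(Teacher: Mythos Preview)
Your proposal is correct and follows exactly the approach the paper intends: the paper does not give a separate proof of this proposition but derives it from the sentence immediately preceding it, namely that an $n$-properly-graded quiver $Q$ is a complete $\tau$-slice in $\zzs{n-1}Q$. You have simply unpacked that remark, identifying $Q$ with the level-$0$ copy in $\zzs{n-1}Q$ and verifying convexity and the $\tau$-orbit condition directly from the definitions.
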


Consider an returning arrow quiver $\tQ$, it is known that the full bound subquiver $\zZ_{\vv} \tQ[0,n]$ of $\zZ_{*,-} \tQ$ with vertex set $\{(i,t)|i\in\tQ_0, 0\le t \le n\}$ is a $\tau$-slice in $\zZ_{\vv} \tQ$.
Write $Q^N = \zZ_{\vv} \tQ[0,n]$, and let $\LL^N$ be the algebra defined by $Q^N$, then $\tL\#_{*,-} k\zZ^*$ is the repetitive algebra of $\LL^N$, by Theorem 5.12 of \cite{g12}, and we write $\what{\LL^N} =\tL\#_{*,-} k\zZ^*$.
$\LL^N$ is the Beilinson-Green algebra of $\tL$ defined in \cite{c09}(called Beilinson algebra there).

Write $\GG^N = (\LL^N)^{!,op} $ for the quadratic dual of $\LL^N $, and write $\what{\GG^N}$ for the repetitive algebra of $\GG^N$.
Write $\what{\LL}= \tL\#_{-,*} k\zZ^*$,  similar to the argument in \cite{g12}, we also have $\what{\LL}$ is the repetitive algebra of $\LL$.

The quiver $\zzs{n-1} Q$ are important in studying the higher preprojective and preinjective components (see \cite{gll19b}).
\medskip

The representation theory of the $n$-slice algebra and of the algebras constructed above are also related.

The algebras $\GG$ and $\LL$ are Koszul dual by Section 2 of \cite{bgs96}, and  there is an equivalence between their  derived categories of finite generated graded modules as triangulated categories.
This equivalence extends to certain triangulated categories related to the  algebras constructed.

For an AS-regular algebra $\tG$, denote by $\mathrm{qgr} \tG$ the non-commutative projective scheme of $\tG$ (see \cite{az94}).
The following theorem is the generalization of a theorem stated and proved in \cite{gxl21} for the triangulated categories related to $n$-slice algebras obtained from a McKay quiver.

\begin{thm}\label{mckequi}
Let $\GG$ be an $n$-slice algebra, and let $\LL$ be its quadratic dual.

Then the following categories are equivalent as triangulated categories:
\begin{enumerate}
\item \label{ggn}  the bounded derived category $\mathcal D^b(\GG^N) $ of the finitely generated $\GG^N$-modules;
\item \label{lln}  the bounded derived category $\mathcal D^b(\LL^N) $ of the finitely generated $\LL^N$-modules;
\item \label{grtl1} the stable category $\underline{\mathrm{grmod}_{*,-}} \tL$ of finitely generated graded $\tL$-modules with respect to the first grading;
\item \label{grtdl1} the stable category $\underline{\mathrm{grmod}_{*,-}} \Delta(\LL^N)$ of finitely generated graded $\Delta(\LL^N)$-modules with respect to the first grading;
\item \label{grdg1}  the stable category $\underline{\mathrm{grmod}_{*,-}} \Delta(\GG^N)$ of finitely generated graded $\Delta(\GG^N)$-modules with respect to the first grading;
\item \label{mhtl} the stable category $\underline{\mathrm{mod}}\, \widehat{\LL^N}  $ of finitely generated $\widehat{\LL^N} $-modules;
\item \label{mhtg} the stable category $\underline{\mathrm{mod}}\, \widehat{\GG^N}  $ of finitely generated $\widehat{\GG^N} $-modules.
\end{enumerate}
If $\GG$ is of infinite type, then  they are also equivalent to the following triangulated category.
\begin{enumerate}
\item[(8)]
    the bounded derived category $\mathcal D^b( \mathrm{qgr} \tG) $ of the non-commutative projective scheme of $\tG$;
\end{enumerate}

\end{thm}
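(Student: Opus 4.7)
The plan is to assemble the listed equivalences from four standard ingredients: Koszul duality between a Koszul algebra and its quadratic dual (Beilinson--Ginzburg--Soergel), Happel's theorem identifying $\caD^b(A)$ with $\underline{\mathrm{mod}}\,\widehat{A}$ for $A$ of finite global dimension, the Cohen--Montgomery / Gordon--Green equivalence $\mathrm{grmod}_{\mathbb Z}(R)\simeq \mathrm{mod}(R\#k\mathbb Z^{*})$, and, in the infinite-type case, the non-commutative Bernstein--Gelfand--Gelfand correspondence for Koszul AS-regular algebras.

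First I would verify that $\LL^N$ and $\GG^N$ are finite-dimensional Koszul algebras of finite global dimension that are quadratic duals of one another: Koszulity and finite global dimension are inherited from the Beilinson--Green construction applied to the $(n+1,q+1)$-Koszul self-injective algebras $\tL$ and $\tG$, and the quadratic-duality follows from $\GG=\LL^{!,op}$. Beilinson--Ginzburg--Soergel Koszul duality then yields $(1)\simeq(2)$; applying the same principle to the Koszul pair $\Delta(\LL^N)$, $\Delta(\GG^N)$ (still quadratic duals of each other, since taking trivial extensions is compatible with quadratic duality) gives $(4)\simeq(5)$.

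Next I would apply Happel's theorem to $\LL^N$ and $\GG^N$ to obtain $(2)\simeq(6)$ and $(1)\simeq(7)$. The smash-product equivalence applied to $\tL$ equipped with its first grading then produces
\[
\underline{\mathrm{grmod}}_{*,-}\tL \;\simeq\; \underline{\mathrm{mod}}(\tL\#k\mathbb Z^{*}) \;=\; \underline{\mathrm{mod}}\,\widehat{\LL^N},
\]
that is, $(3)\simeq(6)$; applied to the natural $\mathbb Z$-grading on $\Delta(\LL^N)$ (respectively on $\Delta(\GG^N)$) the same equivalence yields $(4)\simeq(6)$ (respectively $(5)\simeq(7)$). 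These furnish a connected web of equivalences linking items $(1)$--$(7)$. For $(8)$ in the infinite-type case, Theorem~\ref{main1} ensures that $\tG$ is bimodule $(n+1)$-Calabi-Yau and Koszul, hence AS-regular of dimension $n+1$; the non-commutative BGG/Beilinson correspondence of Minamoto--Mori then identifies $\caD^b(\mathrm{qgr}\,\tG)$ with a category already appearing in the list, most naturally with $(3)$ via the Koszul dual $\tL$, splicing $(8)$ into the web.

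The main technical obstacle I foresee is bookkeeping the two gradings on $\tL$ and matching them across the various constructions: the first grading (the Koszul grading that makes $\tL$ into an $(n+1,q+1)$-Koszul algebra) must be simultaneously the grading used in Cohen--Montgomery to produce $\widehat{\LL^N}=\tL\#k\mathbb Z^{*}$ and the $\mathbb Z$-grading induced from the repetitive-algebra description $\widehat{\LL^N}=\Delta(\LL^N)\#k\mathbb Z^{*}$. A subsidiary point is checking the hypotheses of Happel's theorem, namely finite global dimension of $\LL^N$ and $\GG^N$, which follows from the triangular shape of the Beilinson--Green quiver $Q^N=\zZ_{\vv}\tQ[0,n]$ together with the Koszulity established via Lemma~\ref{GGKoszul}.
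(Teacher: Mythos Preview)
Your overall strategy---Koszul duality for $(1)\!\simeq\!(2)$, Happel for $(2)\!\simeq\!(6)$ and $(1)\!\simeq\!(7)$, Cohen--Montgomery for $(3)\!\simeq\!(6)$, and Minamoto--Mori for $(8)$---is sound and close to the paper's, but one step is wrong and another is glossed over.

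\textbf{The genuine error.} Your claim that $\Delta(\LL^N)$ and $\Delta(\GG^N)$ are quadratic duals because ``taking trivial extensions is compatible with quadratic duality'' is false. Theorem~\ref{mainthm} in this very paper says the opposite: the quadratic dual of a (twisted) trivial extension $\Delta_\nu\LL$ is the \emph{preprojective} algebra $\Pi(\LL^{!,op})$, not the trivial extension $\Delta(\LL^{!,op})$. So your direct link $(4)\!\simeq\!(5)$ via BGS does not exist. Fortunately your web is still connected without it (via $(4)\!\simeq\!(6)\!\simeq\!(2)\!\simeq\!(1)\!\simeq\!(7)\!\simeq\!(5)$), so the slip is not fatal---but you should delete that sentence rather than leave a false assertion in the argument.

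\textbf{A subtlety you skipped.} BGS Koszul duality (Theorem~2.12.6 of \cite{bgs96}) is an equivalence between bounded derived categories of \emph{graded} modules, not of modules. The paper handles the passage from $\caD^b(\grm\,\GG^N)$ to $\caD^b(\mmod\,\GG^N)$ by identifying the latter with an orbit category of the former (citing \cite{g20}); you should say something here, since for general Koszul algebras there is no such descent.

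\textbf{Comparison with the paper.} Where you use Cohen--Montgomery to link $(3),(4),(5)$ directly to $(6),(7)$, the paper instead invokes Chen \cite{c09}: Theorem~1.1 there gives $\mathrm{grmod}_{*,-}\tL\simeq\mathrm{grmod}\,\Delta(\LL^N)$, hence $(3)\!\simeq\!(4)$ on stable categories, and Corollary~1.2 gives $(3)\!\simeq\!(2)$ directly. Your route works too, provided you justify the identification $\Delta(\LL^N)\#k\mathbb Z^{*}\simeq\widehat{\LL^N}$ you invoke in the last paragraph; this is standard but deserves a reference, since $\widehat{\LL^N}$ was \emph{defined} here as $\tL\#_{*,-}k\mathbb Z^{*}$, and its coincidence with the unfolding of $\Delta(\LL^N)$ is exactly the content of Chen's theorem combined with Cohen--Montgomery. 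For $(8)$, the paper links directly to $(1)$ via Theorem~4.14 of \cite{mm11}; your choice to land on $(3)$ via BGG is equally valid.
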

\begin{proof}
By Propositions 2.6 and 6,5 of \cite{g20}, we have $\GG^N \simeq \LL^{N,!,op}$ is Koszul.
By Proposition 2.5,  and Corollary 2.4 of \cite{g20}, we have that $\caD^b(\mmod \GG^N) $ and $ \caD^b(\mmod \LL^N)$ are equivalent to the orbit categories of $\caD^b(\grm \GG^N)$ and to the orbit categories of $\caD^b(\grm \LL^N)$, respectively, using the proof Theorem 2.12.1 of \cite{bgs96} (see the arguments before Theorem 6.7 of \cite{g20}).
So by Theorem 2.12.6 of \cite{bgs96}, $\caDb(\mmod \GG^N)$ and $\caDb(\mmod \LL^{N})$ are equivalent as triangulated categories.
This proves the equivalence of (\ref{ggn}) and (\ref{lln}).

Since $\LL^N$ is the Beilinson-Green algebra of $\tL$, we have that ${\mathrm{grmod}_{*,-}} \tL$ and ${\mathrm{grmod}_{*,-}} \Delta (\LL^N)$ are equivalent by Theorem 1.1 of \cite{c09}.
So their stable categories,  $\underline{\mathrm{grmod}_{*,-}} \tL$ and $\underline{\mathrm{grmod}_{*,-}} \Delta (\LL^N)$, are equivalent as triangulated categories.
This gives the equivalence of (\ref{grtl1}) and (\ref{grtdl1}).

By Corollary 1.2 of \cite{c09}, we have that $\underline{\mathrm{grmod}_{*,-}} \tL$ and $\caDb(\mmod \LL^N)$ are equivalent as triangulated categories.
This proves the equivalence of (\ref{grtl1}) and (\ref{lln}).

Combine  the above two  equivalences, we get the equivalence of (\ref{grtdl1}) and (\ref{lln}).
The equivalence of (\ref{grdg1}) and (\ref{ggn}) follows  similarly.

The equivalences of (\ref{lln}) and (\ref{mhtl}) and of (\ref{ggn}) and (\ref{mhtg}) are  direct consequence of Theorem II.4.9 of \cite{h88}.

If $\GG$ is of infinite type, then $\tG$ is an AS-regular algebra.
By Theorem 4.14 of \cite{mm11}, we have that $\mathcal D^b( \mathrm{qgr} \tG) $ is equivalent to $\mathcal D^b(\GG^N) $ as triangulated categories.
This is the equivalence of (\ref{ggn}) and (8) for $n$-slice algebra of infinite type.
\end{proof}

We remark that the equivalence of (\ref{ggn}) and (8) can be regarded as a generalization of Beilinson correspondence and the equivalence of (\ref{grtl1}) and (8) can be regarded as a generalization of Berstein-Gelfand-Gelfand correspondence \cite{b78,bgg78}.
So we have the following picture for the equivalences of the triangulated categories in Theorem \ref{mckequi} for infinite types:

\tiny\eqqc{depictingcat}{\xymatrix@C=2.0cm@R1cm{ \txt{$\caDb(\LL^N$)} \ar@{<->}[rr]^-{\txt{} }\ar@{<->}@[black][dr]|-{\txt{}}&& \txt{ $\caDb(\GG^N) $} \ar@{<->}@[black][dr]|-{\txt{ Beilinson equivalence }} \\
&\txt{ \color{black}{$\underline{\mathrm{grmod}} \tL $}}\ar@{<->}@[black][dl]|-{\txt{}} \ar@[black]@{<->}[rr]|-{\txt{\color{black}{BGG equivalence}}} &&\txt{\color{black}{$\mathcal D^b( \mathrm{qgr} \tG) $} }\ar@{<->}@[black][dl]\\
\txt{ \color{black}{$\underline{\mmod} \widehat{\LL^N} $}} \ar@{<->}@[black][uu] \ar@[black]@{<->}[rr]^-{\txt{\color{black}{}}} &&\ar@{<->}@[black][uu]\txt{\color{black}{$\underline{\mmod} \widehat{\GG^N}$} }}.
}\normalsize

Similar to the above, we also have the following equivalent triangulated categories, using the second grading of $\tG$.
Now we have the algebras $\widehat{\LL}  = \tL \#_{-,*} \zZ^*$ with the bound quiver $\zzs{n-1}Q$ and $\widehat{\GG}  = \tG \#_{-,*} \zZ^*$ with the bound quiver $\zzs{n-1}Q^{\perp}$.

\begin{thm}\label{tricateq1}
Let $\GG$ be a nicely-graded $n$-slice algebra, and let $\LL$ be its quadratic dual.
Then the following triangulated categories are  equivalent
\begin{enumerate}
\item[(9)]  the bounded derived category $\mathcal D^b(\GG) $ of the finitely generated $\GG$-modules;
\item[(10)]  the bounded derived category $\mathcal D^b(\LL) $ of the finitely generated $\LL$-modules;
\item[(11)]  the stable category $\underline{\mathrm{grmod}_{-,*}} \tL$ of finitely generated graded $\tL$-modules with respect to the second grading;
\item[(12)]  the stable category $\underline{\mathrm{grmod}_{-,*}} \Delta(\GG)$ of finitely generated graded $\Delta(\GG)$-modules with respect to the second grading;
\item[(13)] the stable category $\underline{\mathrm{mod}} \widehat{\LL}  $ of finitely generated $\widehat{\LL} $-modules;
\item[(14)] the stable category $\underline{\mathrm{mod}} \widehat{\GG}  $ of finitely generated $\widehat{\GG} $-modules.
\end{enumerate}
\end{thm}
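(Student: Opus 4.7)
The plan is to mirror the proof of Theorem \ref{mckequi}, replacing the first grading of $\tL$ by the second grading throughout. The three main ingredients remain the same: Koszul duality between $\GG$ and $\LL$ via BGS \cite{bgs96}, Chen's stable/derived equivalence \cite{c09} relating stable categories of graded modules to module categories over a Beilinson--Green algebra, and Happel's Theorem II.4.9 of \cite{h88} identifying $\mathcal D^{b}(A)$ with $\underline{\mmod}\hat{A}$ for the repetitive algebra $\hat{A}$ of an algebra $A$ of finite global dimension.

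First I would record the structural identifications that make the second-grading setup simpler than the first-grading one. By \eqref{degree1fortL}, $\tL_{-,0}=\LL$ and $\tL_{-,1}=D\LL$, so with respect to the second grading $\tL$ is precisely the twisted trivial extension $\Delta_{\sigma}\LL = \LL\ltimes D\LL^{\sigma}$ with its natural $\zZ$-grading, and the paper has already noted that $\widehat{\LL}=\tL\#_{-,*}k\zZ^{*}$ is then the repetitive algebra of $\LL$ in the sense of Happel. Symmetrically, by \eqref{degreezerofortG} the second grading on $\tG$ has $\GG$ as its degree-zero component, and $\Delta(\GG)=\GG\ltimes D\GG$ carries the standard Happel grading. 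With these identifications in hand I would chain the equivalences as follows: (9)$\simeq$(10) follows from the Koszul duality of $\GG$ and $\LL$ (both Koszul by Lemma \ref{GGKoszul}) via Theorem 2.12.6 of \cite{bgs96}, exactly as in the first step of the proof of Theorem \ref{mckequi}; (10)$\simeq$(13) and (9)$\simeq$(14) follow from Happel's Theorem II.4.9; and (10)$\simeq$(11), (9)$\simeq$(12) follow from the graded version of Happel's theorem, or alternatively from Chen's Corollary 1.2 of \cite{c09} applied to the trivial extensions in their natural two-step grading, together with Theorem II.4.9 of \cite{h88} applied to the associated smash products.

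The main obstacle I expect is the equivalence (9)$\simeq$(14). In contrast to $\tL$, which in its second grading has only two nonzero components, $\tG$ in its second grading is unbounded in positive degrees, so $\widehat{\GG}=\tG\#_{-,*}k\zZ^{*}$ is not literally the Happel repetitive algebra of $\GG$. To bridge this gap I would argue via Koszul duality at the smash-product level: $\widehat{\LL}$ and $\widehat{\GG}$ are locally finite quadratic algebras with bound quivers $\zzs{n-1}Q$ and $\zzs{n-1}Q^{\perp}$ respectively, so they are Koszul dual as locally finite graded algebras. Combining the extended BGS duality with the already established equivalence $\underline{\mmod}\widehat{\LL}\simeq \mathcal D^{b}(\LL)\simeq \mathcal D^{b}(\GG)$ and the self-injectivity of $\widehat{\LL}$ should transport the stable/derived identification across to $\widehat{\GG}$ and yield $\underline{\mmod}\widehat{\GG}\simeq \mathcal D^{b}(\GG)$, closing the diagram.
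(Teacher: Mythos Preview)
Your approach is exactly the one the paper intends: the paper gives no separate proof of Theorem~\ref{tricateq1}, only the sentence ``Similar to the above \ldots\ using the second grading of $\tG$'', so mirroring the proof of Theorem~\ref{mckequi} with the second grading in place of the first is precisely what is being asked. Your caution about item~(14) is in fact sharper than the paper itself, which simply asserts the parallel without discussing the fact that $\widehat{\GG}=\tG\#_{-,*}k\zZ^{*}$ is not literally the Happel repetitive algebra of $\GG$; your Koszul-duality workaround at the smash-product level is a reasonable way to close what the paper leaves implicit.
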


By Theorems \ref{mckequi} and \ref{tricateq1}, we can approach the representation theory of a $n$-slice algebra from different point of views, if it is of infinite type, such approaches also have a non-commutative interpretation as in (8).
In \cite{hio14}, the representation theory of $n$-representation-infinite algebra is studied via the category  $\mathcal D^b( \mathrm{qgr} \tG) $.

In general, $\GG$ and $\GG^N$ are not well related, see the example 4.7 in \cite{ghl22b}.
The algebra $\GG^N$ is always nicely-graded.
When $\GG$ is nicely-graded, $\GG$ is obtained by a sequence of $n$-APR tilts from any indecomposable summand of $\GG^N$.
So it is natural to study nicely-graded $n$-slice algebras when dealing with the representation theory.

\section{Pairs of higher slice algebras of finite type\label{finite}}

Assume that $\GG$ is an $n$-slice algebra of finite type with  Coxeter index $q+1$, it is an  $n$-representation-finite algebra  with $(q+1, n+1)$-Koszul $(n+1)$-preprojective algebra $\tG$.
Let $\LL$ be the  quadratic dual of $\GG$ and let $\tL = \dtsL$ be the twisted trivial extension of $\LL$.
Let $Q=  (Q_0,Q_1,\rho)$ be the bound quiver of $\LL$, $Q^{\perp}= (Q_0,Q_1,\rho^{\perp})$ be the bound quiver of $\GG$, then $Q$ is an $n$-properly-graded quiver and $Q^{\perp} $ is an $n$-slice.
Now we have the returning arrow quiver $\tQ$ for $\tL$, its quadratic dual quiver $\tQ^{\perp}$ for $\tG$, the infinite quivers  $\zZ_{\vv}\tQ$ for the smash product $\widehat{\LL^N}=\tL\#_{*,-} \zZ^*$ and $\zzs{n-1} Q$ for the smash product $\widehat{\LL} = \tL\#_{-,*} \zZ^*$.

We now show that higher slice algebras of finite type appear in pairs.

\begin{thm}\label{fin}
Assume that $\GG$ is an acyclic $n$-slice algebra of finite type with Coxeter index $q+1$.
Then there is a $q$-slice algebra $\GG'$ with Coxeter index $n+1$ such that their repetitive algebras are quadratic dual.
\end{thm}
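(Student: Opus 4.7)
The plan is to exchange the roles of the two Koszul parameters $(n+1)$ and $(q+1)$ attached to the bigraded preprojective algebra $\tG=\Pi(\GG)$, and extract the companion $\GG'$ as the quadratic dual of a $q$-properly-graded algebra $\LL'$ that arises as a $\tau'$-slice algebra inside $\tG$.

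First I would use the finite type hypothesis together with Theorem \ref{main1} to see that $\tL=\Delta_\nu\LL$ is a finite dimensional $(n+1,q+1)$-Koszul self-injective algebra of Loewy length $n+2$. Taking the quadratic dual exchanges the two Koszul parameters, so $\tG=\tL^{!,op}$ is a $(q+1,n+1)$-Koszul self-injective algebra of Loewy length $q+2$. In particular $\tG$ is itself a stable $q$-translation algebra and its bound quiver $\tQ^{\perp}$ is a stable $q$-translation quiver, with $q$-translation $\tau'$ the Nakayama permutation of $\tG$.

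Next I would build $\GG'$ by picking a complete $\tau'$-slice $Q'$ in $\tQ^{\perp}$ (a full convex bound subquiver meeting each $\tau'$-orbit exactly once), setting $\LL'=kQ'/(\trho^{\perp}\cap kQ'_{2})$, and defining $\GG'=\LL'^{!,op}$. The symmetric analog of Proposition \ref{returning_trivial_ext} applied to $\LL'$, combined with the symmetric analog of Theorem \ref{mainthm}, then identifies $\tG$ with the twisted trivial extension $\Delta_{\nu'}\LL'$ for the sign automorphism $\nu'$ of $\LL'$. This shows $\LL'$ is $q$-properly-graded with stable $q$-translation trivial extension $\tG$, so $\GG'$ is a $q$-slice algebra. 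Its Coxeter index is $n+1$ because $\Pi(\GG')=(\Delta_{\nu'}\LL')^{!,op}=\tL$ is $(n+1,q+1)$-Koszul.

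For the quadratic duality of the repetitive algebras, I would use that the quadratic dual commutes with the smash by the second grading:
\eqqcn{pairdual}{
\widehat{\GG}^{!,op}=(\tG\#_{-,*}k\zZ^{*})^{!,op}\simeq \tG^{!,op}\#_{-,*}k\zZ^{*}=\tL\#_{-,*}k\zZ^{*}=\widehat{\LL},
}
and then identify $\widehat{\LL}=\tL\#_{-,*}k\zZ^{*}$ with $\widehat{\GG'}=\Pi(\GG')\#_{-,*}k\zZ^{*}$ via the compatibility between the two distinguished second gradings on $\tL=\Pi(\GG')$ coming from the pair correspondence. The main obstacle I expect is carrying out this last identification, which amounts to establishing that the natural second grading on $\tL$ (where $\LL$ is degree zero) matches the $\GG'$-adapted second grading on $\Pi(\GG')$ (where $\GG'$ is degree zero) after the identification $\tL=\Pi(\GG')$; this in turn reduces to showing that the $\tau'$-slice $Q'$ chosen in the second step corresponds, under the quadratic duality between $\tG$ and $\tL$, to the returning arrow decomposition of $\tL$, which requires careful bigraded bookkeeping around the Nakayama permutation of $\tG$.
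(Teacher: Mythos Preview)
Your strategy of swapping the Koszul parameters on $\tG=\tL^{!,op}$ is the right idea, and your first paragraph is essentially what the paper does. The gap is in where you take the $\tau'$-slice. You propose to pick a complete $\tau'$-slice $Q'$ inside the finite bound quiver $\tQ^{\perp}$, but $\tQ^{\perp}$ has the same underlying directed graph as the returning arrow quiver $\tQ$, and every vertex of $\tQ$ lies on an oriented cycle. In a quiver with oriented cycles, any convex full subquiver containing a vertex on a cycle must contain that entire cycle; hence the only convex full subquivers of $\tQ^{\perp}$ are unions of connected components, and no proper complete $\tau'$-slice exists. The paper's definition of complete $\tau$-slice is explicitly restricted to \emph{acyclic} stable translation quivers for precisely this reason. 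Consequently your construction of $\LL'$ never gets off the ground, and the ``symmetric analogs'' of Proposition~\ref{returning_trivial_ext} and Theorem~\ref{mainthm} that you invoke (both of which need $\LL'$ to have an acyclic bound quiver) cannot be applied.

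The paper repairs this by first passing to the acyclic cover: it forms $\widehat{\LL}^{!,op}$, whose bound quiver is the infinite acyclic stable $q$-translation quiver $\zzs{n-1}Q^{\perp}$, and takes $Q'$ to be a $\tau_{\perp}$-slice there. This yields an honest $q$-properly-graded algebra $\LL'$, and the repetitive-algebra statement becomes immediate rather than an obstacle: $\widehat{\LL'}$ has bound quiver $\zzs{q-1}Q'\simeq \zzs{n-1}Q^{\perp}$, which is by construction the quadratic dual of the bound quiver $\zzs{n-1}Q$ of $\widehat{\LL}$. No delicate matching of second gradings on $\tL$ is needed. Your attempt to prove $\Pi(\GG')\simeq\tL$ and then juggle smash products is a detour around a difficulty that disappears once the slice is taken in the correct (acyclic) ambient quiver.
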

The $q$-slice algebra $\GG'$ is called a {\em companion of $\GG$}.
\begin{proof}
Let $\GG$ be an acyclic $n$-slice algebra with bound quiver $Q^{\perp}$, and let $\LL$ be the quadratic dual of $\GG$, with bound quiver $Q$.
Then $\tL$ and  $\wht{\LL}$ are both stable $n$-translation algebras with bound quiver the returning arrow quiver $\tQ$ and the covering quiver $\zzs{n-1} Q$, and both are  $(n+1,q+1)$-Koszul.
Now consider their respectively quadratic duals $\tG= \tL^{!,op}$ and  $\wht{\GG} = \wht{\LL}^{!,op}$, their bound quivers are respectively $\tQ^{\perp}$ and $\zzs{n-1} Q^{\perp}$.

By Theorem 6.6 of \cite{g20}, $\tL^{!,op}$ is the $(n+1)$-preprojective algebra of an $n$-representation-finite algebra $\GG$.
So by corollary 3.4 of \cite{io13}, $\tL^{!,op}$ is self-injective, and by Proposition 3.4 of \cite{bbk02}, it is $(q+1,n+1)$-Koszul.
So by Theorem 5.3 of \cite{g16}, $\wht{\LL}^{!,op}$ is also $(q+1,n+1)$-Koszul  self-injective.
This implies that $\wht{\LL}^{!,op}$ is a $q$-translation algebra and $\zzs{n-1}Q^{\perp}$ is a $q$-translation quiver with $q$-translation $\tau_{\perp}$.

Now let $Q'$ be a complete $\tau_{\perp}$-slice of $\zzs{n-1}Q^{\perp}$, then it is a $q$-properly-graded quiver by Lemma 6.1 of \cite{g20} and we have that $\zzs{n-1}Q^{\perp} \simeq \zzs{q-1} Q'$ by Proposition 3.5 of \cite{gll19b}.
Let $\LL'$ be the algebra defined by $Q'$, it is a $q$-properly-graded algebra.
Similar to the proof of Theorem 6.12 of \cite{g12}, $\zzs{n-1}Q^{\perp}$ is the bound quiver of its repetitive algebra $\wht{\LL'}$, which is $(q+1,n+1)$-Koszul, thus $\GG'={\LL'}^{!,op}$ is an $q$-slice algebra of finite type with Coxeter index $n+1$.

This proves the theorem.
\end{proof}

The algebra $\GG'$ is not unique in general, but there are only finite many of them up to isomorphism.
In fact, let $Q'$ be a complete $\tau_{\perp}$-slice of $\zzs{n-1} Q^{\perp}$, then any complete $\tau_{\perp}$-slice $Q''$ of $\zzs{n-1} Q^{\perp}$ has the same number, say $r$, of vertices as $Q'$.
Since $\tau_{\perp}$ is an automorphism of $\zzs{n-1} Q^{\perp}$, for any complete $\tau_{\perp}$-slice  $Q''$  of $\zzs{n-1} Q^{\perp}$,  by  shifting with $\tau_{\perp}^t$ if necessary, has a common  vertex with $Q'$. 
Thus for any vertex $i$ of $Q''$, there is either a path of length no longer than $2r$ from some vertex $j$ in $Q'$ to $i$, or a path of length no longer than $2r$ from $i$ to some vertex $j$ in $Q'$.
So the vertex set of the non-isomorphic  complete $\tau_{\perp}$-slices of $\zzs{n-1} Q^{\perp}$ can be chosen from  the set $V$ of  vertices of the paths of length $2r$ starting or ending at a vertex  in $Q'$, and $V$ is a finite set.
This tells us that there are only finite many  non-isomorphic  complete $\tau_{\perp}$-slices of $\zzs{n-1} Q$.

As a corollary of Theorem \ref{fin}, we have the following result on their bound quivers from its proof.
\begin{cor}\label{zzsnq}
Let  $Q^{\perp}$  be the bound quiver of an $n$-slice algebra $\GG$ of finite type with Coxeter index $q+1$ and let ${Q'}^{\perp}$ be the bound quiver of its companion $\GG'$.
Then $\zzs{n-1}Q \simeq \zzs{q-1} {Q'}^{\perp}$
\end{cor}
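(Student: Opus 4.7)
The plan is to deduce the corollary by applying quadratic dualization to the algebra isomorphism underlying Theorem \ref{fin} and then reading off the corresponding bound-quiver statement. Recall that that proof identifies $\wht{\LL}^{!,op}$ with $\wht{\LL'}$: the first algebra has $\zzs{n-1}Q^{\perp}$ as its bound quiver (as the quadratic dual of the smash product with bound quiver $\zzs{n-1}Q$), and $\LL'$ is defined precisely so that its repetitive algebra $\wht{\LL'}$ has this same bound quiver, yielding $\wht{\LL}^{!,op}\simeq\wht{\LL'}$.

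First I would observe that both $\wht{\LL}=\tL\#_{-,*}k\zZ^*$ and $\wht{\LL'}=\tL'\#_{-,*}k\zZ^*$ are quadratic, since $\tL$ and $\tL'$ are $(n+1,q+1)$-Koszul and the smash product with $k\zZ^*$ preserves quadraticity. Consequently $(-)^{!,op}$ is involutive on each of them, and applying it to $\wht{\LL}^{!,op}\simeq\wht{\LL'}$ yields $\wht{\LL}\simeq\wht{\LL'}^{!,op}$.

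Next I would invoke the fact that quadratic dualization commutes with the smash product, so $\wht{\LL'}^{!,op}\simeq\wht{\GG'}$, where $\GG'={\LL'}^{!,op}$ has bound quiver ${Q'}^{\perp}$. By the same $\zZ_{-,*}$-construction applied to the returning-arrow quiver of ${Q'}^{\perp}$, the bound quiver of $\wht{\GG'}$ is $\zzs{q-1}{Q'}^{\perp}$. Reading the chain $\wht{\LL}\simeq\wht{\GG'}$ at the level of bound quivers then produces the desired $\zzs{n-1}Q\simeq\zzs{q-1}{Q'}^{\perp}$.

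The step requiring most care will be verifying that quadratic duality commutes both with the smash product $(-)\#_{-,*}k\zZ^*$ and with the returning-arrow / twisted trivial extension construction $\LL\mapsto\tL$. This reduces to checking, using the explicit relation set $\rho_{\zZ_{-,*}\tQ}=\zZ\rho\cup\zZ Q_{2,\caM}\cup\zZ\rho_{\caM}$ recorded in Section \ref{eqtc}, that orthogonalizing each constituent family turns the relations of $\zzs{n-1}Q^{\perp}$ into those of $\zzs{n-1}Q$, which is a direct (though slightly tedious) computation from the definitions.
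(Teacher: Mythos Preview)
Your approach is correct and essentially the same as the paper's: both extract the corollary by quadratic-dualizing the identification established inside the proof of Theorem~\ref{fin}. The paper simply remarks that the corollary follows ``from its proof'', where the bound-quiver isomorphism $\zzs{n-1}Q^{\perp}\simeq\zzs{q-1}Q'$ (via Proposition~3.5 of \cite{gll19b}) already appears; applying $(-)^{\perp}$ to both sides immediately gives the statement.

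Your detour through algebras, however, creates work you do not need. The proof of Theorem~\ref{fin} establishes the isomorphism \emph{at the level of bound quivers}, and quadratic dualization of quadratic bound quivers is tautologically involutive. So the compatibility checks you flag in your final paragraph---that $(-)^{!,op}$ commutes with the smash product $(-)\#_{-,*}k\zZ^*$ and with the returning-arrow construction---are unnecessary here: you can remain on the quiver side throughout and simply apply $(-)^{\perp}$ to $\zzs{n-1}Q^{\perp}\simeq\zzs{q-1}Q'$ directly.
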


Now assume that  $\GG$ is nicely-graded and connected, then $\zzs{n-1}Q$ is a connected component of $\zZ_{\vv}\tQ$, while $\zzs{q-1}Q'$ is a connected component of $\zZ_{\vv}\tQ'$.
Thus the connected components of $\zZ_{\vv}\tQ$ and of $\zZ_{\vv}\tQ'$ are all the same as quivers.

We see from the proof Theorem \ref{fin} that $\tQ^{\perp}$ is a $q$-translation quiver with  $q$-translation $\tau_{\perp,\tQ}$.
Write the $n$-translation of the $n$-translation quiver $\zZ_{\vv}\tQ$ as $\tau$ and  the $q$-translation of the $q$-translation quiver $\zZ_{\vv}\tQ
^{\perp}$ as $\tau_{\perp}$.
With the index of the quiver $\zZ_{\vv}\tQ$, we have that $\tau(i,s) = (\tau i,s-n-1)$ and $\tau_{\perp} (i,s ) = (\tau_{\perp,\tQ^{\perp}}
 i,s-q-1)$, by our construction.

\bigskip
Now we study higher translation quivers related to a pair of higher slice algebras of finite type.

A quadratic quiver $\olQ=(\olQ_0,\olQ_1,\olrho)$ is called a {\em double translation quiver of type $(n,q)$} if $\olQ$ is a stable $n$-translation quiver and $\olQ^{\perp}$ is a stable $q$-translation quiver.
We obviously have the following proposition, restates Corollary \ref{zzsnq}.
\begin{pro}\label{zqdbq}
Let $\GG$ be an $n$-slice algebra of finite type with Coxeter index $q+1$ with bound quiver $Q^{\perp}$.
Then $\zzs{n-1}Q$ is a double translation quiver of type $(n,q)$.
\end{pro}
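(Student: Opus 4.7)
The plan is to unpack the definition of double translation quiver of type $(n,q)$ into two conditions: that $\zzs{n-1}Q$ is a stable $n$-translation quiver, and that its quadratic dual is a stable $q$-translation quiver. The first is already in hand from the constructions recalled in Section~\ref{eqtc}, so the real content lies in the second condition, for which the essential input is Theorem~\ref{main1} (equivalently, Theorem~\ref{fin}).

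For the first condition, I would recall that $\zzs{n-1}Q$ is precisely the bound quiver of the smash product $\wht{\LL}=\tL\#_{-,*} k\zZ^*$ with respect to the second grading. Since $\GG$ is an $n$-slice algebra, $\tL$ is $(n+1,q+1)$-Koszul self-injective of Loewy length $n+2$, and the smash product construction preserves self-injectivity, Loewy length, and the $(n+1,q+1)$-Koszul property, so $\wht{\LL}$ is again a stable $n$-translation algebra. By the identification of stable $n$-translation quivers with bound quivers of graded self-injective algebras of Loewy length $n+2$ recorded in Section~\ref{pre}, the quiver $\zzs{n-1}Q$ itself is therefore a stable $n$-translation quiver. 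For the second condition I would pass to quadratic duals: since $\GG$ is of finite type, Theorem~\ref{main1} gives that the $(n+1)$-preprojective algebra $\tG=\Pi(\GG)=\tL^{!,op}$ is self-injective and, by Koszul duality, $(q+1,n+1)$-Koszul of Loewy length $q+2$. The smash product $\wht{\GG}=\wht{\LL}^{!,op}$ inherits these properties, so its bound quiver $\zzs{n-1}Q^{\perp}=(\zzs{n-1}Q)^{\perp}$, as described in Section~\ref{eqtc}, is a stable $q$-translation quiver. Alternatively, one can feed the companion $q$-slice algebra $\GG'$ produced by Theorem~\ref{fin} into the first step: Corollary~\ref{zzsnq} identifies the quadratic dual with a covering quiver built from the $q$-properly-graded quiver associated to $\GG'$, and rerunning the previous argument with the roles of $n$ and $q$ swapped yields the same conclusion.

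The only genuine subtlety I foresee is the routine bookkeeping needed to confirm that $(\zzs{n-1}Q)^{\perp}$ literally coincides with the bound quiver of $\wht{\LL}^{!,op}$, that is, that taking quadratic dual commutes with forming the smash product by the second grading. This amounts to checking that the relation set $\rho_{\zZ_{-,*}\tQ}$ spelled out in Section~\ref{eqtc} behaves correctly under orthogonality on each copy of $Q$ and on the returning arrows, which is already implicit in that discussion and is exactly why the authors can reasonably call this proposition ``obvious''.
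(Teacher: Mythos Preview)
Your proposal is correct and aligns with the paper's approach: the paper gives no separate proof but simply declares the proposition a restatement of Corollary~\ref{zzsnq}, and your argument unpacks exactly the content behind that corollary (namely, the self-injectivity and $(q+1,n+1)$-Koszulity of $\tG$ established in the proof of Theorem~\ref{fin}, together with the standard fact that $\zzs{n-1}Q$ is a stable $n$-translation quiver from Section~\ref{eqtc}). One small quibble: Theorem~\ref{main1} and Theorem~\ref{fin} are not literally equivalent statements, so it would be cleaner to cite Theorem~\ref{fin} (or the specific references \cite{io13} and \cite{bbk02} used in its proof) rather than Theorem~\ref{main1} for the self-injectivity and almost-Koszul properties of $\tG$.
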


If $\olQ$ is a double translation quiver of type $(n,q)$, the algebra $\olL$ defined by the bound quiver $\olQ$ is an $n$-translation algebra and  the algebra $\olL^{\perp}$ defined by the bound quiver $\olQ^{\perp}$ is a stable $q$-translation algebra.
So the maximal bound paths of $\olQ$ define an $n$-translation $\tau$ on $\olQ$ and the maximal bound paths of $\olQ^{\perp}$ define an $q$-translation $\tau_{\perp}$.
Both defines permutations on the vertex set of $\olQ$, and each of them induces an automorphism on the quiver $\olQ$.
We obviously have the following commutative relation for them.

\begin{lem}\label{commtau}
$$\tau \tau_{\perp} = \tau_{\perp}\tau.$$
\end{lem}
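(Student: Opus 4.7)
The plan is to use the fact that $\tau$ and $\tau_\perp$ are the Nakayama permutations of the two Koszul-dual graded self-injective algebras $\olL = k\olQ/(\olrho)$ and $\olL^{!,op} = k\olQ/(\olrho^\perp)$, both defined on the same vertex set $\olQ_0$, and show that they commute by realizing them as shifts in independent directions of a bigraded lift of $\olQ$.

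First I would recall the intrinsic characterization: since $\olQ$ is a stable $n$-translation quiver, $\olL$ is a Frobenius $(n+1,q+1)$-Koszul algebra and $\tau i$ is the unique vertex with $\soc(\olL e_i) \cong S_{\tau i}$ (living in degree $n+1$); equivalently $\tau^{-1} i$ is the endpoint of the unique-up-to-scalar maximal bound path of $\olQ$ starting at $i$. Dually, $\tau_\perp i$ is determined by the socle of $\olL^{!,op} e_i$ in degree $q+1$. Next, following the strategy used in Sections~\ref{pre} and \ref{nhns} and in Theorem~\ref{fin}, I would pass to the bigraded picture: lift $\olL$ to the Beilinson--Green-type algebra on the covering $\zZ_{\vv} \olQ$, which carries a natural bigrading whose two components correspond under Koszul duality to the two gradings on the Koszul dual side. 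In this bigraded cover, the $n$-translation $\tau$ is implemented by the shift $(n+1,0)$ on degrees (composed with a vertex permutation coming from the Nakayama form of $\olL$), while $\tau_\perp$ is implemented by the shift $(0,q+1)$ coming from the Nakayama form of $\olL^{!,op}$. Shifts in independent coordinates commute on the nose, so the induced permutations on $\olQ_0$ commute.

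A more elementary alternative, which I would try first, is a direct path-combinatorial argument using the characterization above. Given $i \in \olQ_0$, pick a maximal bound path $p_i$ of $\olQ$ starting at $i$ (so $t(p_i) = \tau^{-1} i$) and a maximal bound path $p_i^\perp$ of $\olQ^\perp$ starting at $i$ (so $t(p_i^\perp) = \tau_\perp^{-1} i$); both are unique up to scalar. One then shows, using that the arrow set is shared and the two relation sets are orthogonal in $k\olQ_2$, that translating $p_i$ along $\tau_\perp$-orbits produces a maximal bound $\olQ$-path starting at $\tau_\perp^{-1} i$ and ending at $\tau_\perp^{-1}\tau^{-1} i$, and symmetrically for the roles swapped, giving $\tau^{-1}\tau_\perp^{-1} = \tau_\perp^{-1} \tau^{-1}$ and hence the claim.

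The main obstacle I anticipate is that the statement is made for an abstract double translation quiver, whereas the bigraded covering machinery is developed in the excerpt only for quivers of the form $\zzs{n-1} Q$ arising from an $n$-slice algebra; either one must argue that every double translation quiver embeds (as a complete $\tau$-slice and simultaneously a complete $\tau_\perp$-slice) into such a covering, or one must push through the direct combinatorial argument carefully enough to handle the interaction of $\olrho$ with $\olrho^\perp$. The second route looks cleaner since it avoids covering theory and uses only the defining property of the quadratic dual.
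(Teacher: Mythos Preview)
The paper gives no proof of this lemma; it is declared ``obvious'' on the strength of the sentence immediately preceding it, namely that both $\tau$ and $\tau_\perp$ induce automorphisms of the bound quiver $\olQ$. The intended one-line argument is: any bound-quiver automorphism $\sigma$ of $\olQ$ sends maximal bound paths to maximal bound paths, and since $\tau^{-1}i$ is by definition the terminus of the maximal bound path starting at $i$, one gets $\sigma\tau^{-1}=\tau^{-1}\sigma$; applying this with $\sigma=\tau_\perp$ (which is an automorphism of $\olQ^\perp$ and hence of $\olQ$, because preserving $\olrho^\perp$ forces preserving its orthogonal $\olrho$) gives the claim.

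Your ``more elementary alternative'' is exactly this argument, just phrased at the level of individual paths rather than at the level of automorphisms. It is correct, and once you say ``$\tau_\perp$ is a bound-quiver automorphism of $\olQ$'' you can skip the explicit tracking of $p_i$ and $p_i^\perp$: the commutation drops out immediately without any delicate interaction of $\olrho$ with $\olrho^\perp$ beyond the orthogonality itself.

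Your first route via a bigraded cover is more machinery than the situation calls for. The obstacle you flag---that the covering construction is only set up for quivers of the form $\zzs{n-1}Q$---is in fact removed by the paper right \emph{after} the lemma, where it is observed that every double translation quiver is of this form; but you do not need that observation to prove the lemma, and invoking it beforehand would make the logical order awkward.
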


Let $Q$ be a complete $\tau$-slice in $\olQ$, then $\olQ = \zzs{n-1} Q$.
So  all the double translation quivers of type $(n,q)$ are of this form.
As a corollary, we get the following result.

\begin{lem}\label{sliceundertau}
If $Q$ is a complete $\tau$-slice of $\olQ = \zzs{n-1} Q$, then $\tau_{\perp}^{t} Q$ is a complete $\tau$-slice for all the integer $t$.

If $Q'$ is a complete $\tau_{\perp}$-slice of $ \zzs{n-1} Q^{\perp}$, then $\tau^{t} Q'$ is a complete $\tau_{\perp}$-slice for all the integer $t$.

As a quiver, we have $\zzs{n-1} Q = \zzs{q-1} Q'$.
\end{lem}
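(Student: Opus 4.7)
The plan is to exploit the fact that $\tau_{\perp}$, as an automorphism of the $q$-translation quiver $\olQ^{\perp}$, acts on the vertex-arrow structure underlying $\olQ^{\perp}$, which coincides with that of $\olQ$; hence $\tau_{\perp}$ is a quiver automorphism of $\olQ$ itself, and similarly $\tau$ is a quiver automorphism of $\olQ^{\perp}$. Combined with Lemma \ref{commtau}, the powers $\tau_{\perp}^t$ and $\tau^t$ commute with $\tau$ and $\tau_{\perp}$ respectively for every $t \in \zZ$. This is the only nontrivial observation; the rest is extracting the consequences.

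For the first assertion, since $\tau_{\perp}^t$ is a quiver automorphism of $\olQ$, its image $\tau_{\perp}^t Q$ is a full convex subquiver. The commutation $\tau \tau_{\perp}^t = \tau_{\perp}^t \tau$ implies that $\tau_{\perp}^t$ permutes the $\tau$-orbits of vertices: it sends the $\tau$-orbit of $v$ to the $\tau$-orbit of $\tau_{\perp}^t v$, and this is a bijection of the set of $\tau$-orbits. For any $\tau$-orbit $O$, the unique vertex of $Q$ lying in the orbit $\tau_{\perp}^{-t}(O)$ maps under $\tau_{\perp}^t$ to the unique vertex of $\tau_{\perp}^t Q$ inside $O$. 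Hence $\tau_{\perp}^t Q$ meets each $\tau$-orbit exactly once, proving it is a complete $\tau$-slice. The second assertion is obtained verbatim by interchanging the roles of $(Q,\tau)$ and $(Q',\tau_{\perp})$ and viewing $\tau^t$ as a quiver automorphism of $\olQ^{\perp}$.

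For the last assertion, we invoke the same reconstruction principle already used inside the proof of Theorem \ref{fin}: a complete $\tau_{\perp}$-slice in a $q$-translation quiver, together with its $q$-translation, recovers the ambient quiver via the $\zzs{q-1}$-construction, by Proposition 3.5 of \cite{gll19b}. Applied to the complete $\tau_{\perp}$-slice $Q'$ of the $q$-translation quiver $\olQ^{\perp} = (\zzs{n-1}Q)^{\perp}$, this yields $\zzs{q-1} Q' \simeq (\zzs{n-1} Q)^{\perp}$. Since the quadratic dual construction only replaces the relation set while preserving the underlying vertex-arrow quiver, the identification $\zzs{n-1} Q = \zzs{q-1} Q'$ as quivers follows.

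The main obstacle is the initial recognition that $\tau_{\perp}$ actually acts as a quiver automorphism of $\olQ$; once this is settled, parts (a) and (b) reduce to the formal transport-of-slices argument above, and part (c) is immediate from the slice-recovery result of \cite{gll19b} together with the observation that quadratic duality does not disturb the underlying vertex-arrow structure.
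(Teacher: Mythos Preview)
Your proof is correct and follows the same approach the paper intends: the lemma is stated as an immediate corollary of Lemma \ref{commtau} together with the observation (made just before that lemma) that $\tau$ and $\tau_{\perp}$ each induce an automorphism of the quiver $\olQ$. You have simply spelled out the transport-of-structure argument and, for the last assertion, invoked the same slice-reconstruction result (Proposition~3.5 of \cite{gll19b}) already used in the proof of Theorem~\ref{fin}.
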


A full convex subquiver $D$ of a double translation quiver $\olQ$ of type $(n,q)$ is call a {\em double slice} if it is maximal with the property that  for any vertex $\bar{i}$ in $D$, at most one of  $\tau^{-1} \bar{i}$ and $\tau_{\perp} \bar{i}$ is in $D$.

 We remark that if $\bar{i}$, $\tau^{-1} \bar{i} \in D$ and there is a bound path of length $q+1$ from $\tau^{-1} \bar{i}$ to $j$ in $\olQ^{\perp}$, then $j \notin D $. If not we have  $\tau_{\perp} \tau j=\bar{i}$ since $\tau_{\perp} j=\tau^{-1} \bar{i}$ and thus $\tau j \in D$, which contradicts the definition of double slice.

We remark that a double slice is an maximal  $\tau$-mature bound subquiver, in the term of $\tau$-mature defined  in \cite{gll19b}.

\medskip

If $\GG$ is an $n$-slice algebra of Coxeter index $q+1$ with bound quiver $Q^{\perp}$, then $\zzv \tQ$ is a double translation quiver of type $(n,q)$, and $\zzv\tQ[h, h+n+q+1]$ is a double slice for any $h$.

\medskip

Now let $\olQ$ be nicely-graded stable double translation quiver of type $(n,q)$, with $n$-translation $\tau$ and $q$-translation $\tau_{\perp}$.
If $S$ is a complete $\tau$-slice in $\olQ$, let $D(S+)$ be the full subquiver formed by $S$ and all the $\tau_{\perp}$-hammocks $H^{\bai}$ starting at each vertex $\bai$  of $S$, let $D(-S)$ be the full subquiver formed by $S$ and all the $\tau_{\perp}$-hammocks $H_{\bai}$ ending at each vertex $\bai$ of $S$.

In $D(S+)$, we have two complete $\tau$-slices, $S$ and $\tau_{\perp}^{-1} S$, and in  $D(-S)$, we have two $\tau$-slices, $S$ and  $\tau_{\perp} S$.
They are the union of $\tau_{\perp}$-hammocks connecting the vertices these two $\tau$-slices.
Note that each $\tau$-hammock is convex, so they are also convex.
\begin{pro}\label{dbslicecst}
$D(S+)$ and $D(-S)$ are convex.
\end{pro}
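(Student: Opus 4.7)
My strategy is to establish convexity of $D(S+)$ by characterizing it as the region of $\olQ$ sitting between the two complete $\tau$-slices $S$ and $\tau_{\perp}^{-1}S$; the convexity of $D(-S)$ will follow by a dual argument obtained by reversing arrow directions. First, using Lemma \ref{commtau}, I note that $\tau_{\perp}^{-1}S$ is itself a complete $\tau$-slice of $\olQ$: since $\tau\tau_{\perp} = \tau_{\perp}\tau$, the automorphism $\tau_{\perp}^{-1}$ permutes $\tau$-orbits and sends a complete $\tau$-slice to another. In particular, by Proposition \ref{dbslice}, both $S$ and $\tau_{\perp}^{-1}S$ are $\tau$-mature (hence convex) subquivers. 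By the hammock characterization, each $\tau_{\perp}$-hammock $H^{\bai}$ consists of the vertices of $\olQ$ lying on some path from $\bai$ to $\tau_{\perp}^{-1}\bai$, so every vertex $v \in D(S+)$ admits both an incoming path from some $\bai \in S$ and an outgoing path to some $\tau_{\perp}^{-1}\baj$ with $\baj \in S$.

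For convexity I would take $x, y \in D(S+)$ together with a path $p: x \to y$ in $\olQ$, and let $z$ be an intermediate vertex on $p$. Choose $\bai, \baj \in S$ and paths $\bai \to x$ and $y \to \tau_{\perp}^{-1}\baj$ as furnished by the previous observation. Concatenating with the relevant subpaths of $p$ produces a path $\bai \to z \to \tau_{\perp}^{-1}\baj$ passing through $z$. The key claim is then that $z$ itself lies in some hammock $H^{\bai'}$ with $\bai' \in S$, which immediately places $z$ in $D(S+)$. To obtain this I would use that $\olQ$ is nicely-graded with degree function $d$, so any two composable paths between fixed endpoints have the same length; this pins $d(z)$ into the interval $[d(\bai), d(\tau_{\perp}^{-1}\baj)]$, and the hammocks $\{H^{\bai'} : \bai' \in S\}$ cover the full subquiver spanned by vertices whose degree falls in this interval and which admit paths in from $S$ and out to $\tau_{\perp}^{-1}S$.

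The main obstacle is making the covering assertion of the previous paragraph precise: showing that a vertex $z$ admitting paths from some vertex of $S$ and to some vertex of $\tau_{\perp}^{-1}S$ must lie on a path $\bai' \to \tau_{\perp}^{-1}\bai'$ with a common endpoint $\bai' \in S$. I would approach this using that $S$ contains exactly one representative of each $\tau$-orbit, combined with the commutation $\tau\tau_{\perp} = \tau_{\perp}\tau$ and the convexity of each individual $\tau_{\perp}$-hammock: the vertex $z$ lies in the $\tau$-orbit of a unique $\bai' \in S$, and the existence of paths from $\bai$ to $z$ and from $z$ to $\tau_{\perp}^{-1}\baj$, together with the structure of $\tau_{\perp}$-orbits, forces $\bai' \to z$ and $z \to \tau_{\perp}^{-1}\bai'$ to both be realized inside $\olQ$, so $z \in H^{\bai'}$. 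The dual argument for $D(-S)$ proceeds analogously, using that $\tau_{\perp}S$ is also a complete $\tau$-slice and that $D(-S)$ is covered by the hammocks $H_{\bai}$ ending at each $\bai \in S$.
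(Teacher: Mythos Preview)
Your approach is significantly more detailed than the paper's. The paper dispatches this proposition in a single sentence preceding the statement: ``Note that each $\tau$-hammock is convex, so they are also convex.'' That is, the paper simply asserts that the union of the convex hammocks $H^{\bai}$ for $\bai\in S$ is again convex, without further justification. You are right to scrutinize this step, since a union of convex subquivers is not convex in general; the paper is implicitly relying on the way these hammocks fit together between the two parallel $\tau$-slices $S$ and $\tau_{\perp}^{-1}S$, but does not spell this out.

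Your strategy of characterizing $D(S+)$ as the region between $S$ and $\tau_{\perp}^{-1}S$ is the natural way to make this rigorous, and you correctly isolate the crux: one must show that any vertex $z$ admitting a path in from $S$ and a path out to $\tau_{\perp}^{-1}S$ actually lies in some $H^{\bai'}$ with $\bai'\in S$. However, your proposed resolution via $\tau$-orbits does not close this gap. Knowing that $z$ lies in the $\tau$-orbit of a unique $\bai'\in S$ gives, at best, a path $\bai'\to z$ in $\olQ$ (one can argue $z=\tau^{-m}\bai'$ for some $m\ge 0$ via degree considerations, and maximal bound paths connect $i$ to $\tau^{-1}i$). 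It does \emph{not} produce a path $z\to\tau_{\perp}^{-1}\bai'$, and more to the point it does not show that $z$ lies on a \emph{bound} path of length $q+1$ in the $q$-translation structure $\olQ^{\perp}$ from $\bai'$ to $\tau_{\perp}^{-1}\bai'$, which is what membership in the $\tau_{\perp}$-hammock $H^{\bai'}$ actually requires. The commutation $\tau\tau_{\perp}=\tau_{\perp}\tau$ relates the two translations on vertices but does not by itself manufacture the needed bound path through $z$. So your argument remains incomplete at precisely the point you flagged as the main obstacle; the paper, for its part, does not attempt to address this point at all.
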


We clearly have that  $D(S+) =D(-\tau_{\perp}^{-1}S)$.

Write $-S_{\perp}$ for the full subquiver of $D(S+)$ with vertex set  $D(S+)_0\setminus S_0$, and write $S_{\perp}+$ for the full subquiver of $D(-S)$ with vertex set  $D(-S)_0\setminus S_0$.

\begin{pro}\label{dbslicetwo}
The full subquiver  $D(S+)$ is formed by a complete $\tau$-slice $S$, and a $\tau_{\perp}$-slice $-S_{\perp}$, connected with arrows from  non-source vertices  of  $S$ to  non-sink vertices $-S_{\perp}$.

The full subquiver $D(-S)$ is formed by a complete $\tau$-slice $S$, and a complete $\tau_{\perp}$-slice $S_{\perp}+$, connected with arrows from  non-source vertices  of  $S_{\perp}+$ to  non-sink vertices  $S$.
\end{pro}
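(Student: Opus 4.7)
The plan is to unpack $D(S+)$ from its definition $D(S+)=S\cup\bigcup_{\bai\in S_0}H^{\bai}$, identify the complement $-S_{\perp}$ of $S$ inside $D(S+)$ as a complete $\tau_{\perp}$-slice of $\olQ^{\perp}$, and then read off the crossing arrows directly from the hammock structure; the $D(-S)$ assertion will follow by the dual argument, or equivalently by applying the first part to an appropriate $\tau_{\perp}$-shift of $S$ and invoking the identity $D(S+)=D(-\tau_{\perp}S)$ noted just above the proposition.

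First I would recall that in the $q$-translation quiver $\olQ^{\perp}$ each $\tau_{\perp}$-hammock $H^{\bai}$ is a convex subquiver with unique source $\bai$ and unique sink $\tau_{\perp}^{-1}\bai$, every vertex of which lies on a bound path in $\olQ^{\perp}$ between these two endpoints. Consequently $D(S+)_0$ contains both $S_0$ and $\tau_{\perp}^{-1}S_0$, and by Lemma \ref{sliceundertau} the set $\tau_{\perp}^{-1}S$ is again a complete $\tau$-slice of $\olQ$. Combined with the convexity of $D(S+)$ established in Proposition \ref{dbslicecst}, this pins $D(S+)$ down as a ``cuboid'' region of hammocks sitting between the two parallel $\tau$-slices $S$ and $\tau_{\perp}^{-1}S$.

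The main claim is that $-S_{\perp}$ is a complete $\tau_{\perp}$-slice of $\olQ^{\perp}$, i.e.\ it meets each $\tau_{\perp}$-orbit exactly once. I would argue this in two steps. (i) Every $\tau_{\perp}$-orbit meeting $D(S+)_0$ also meets $-S_{\perp}$: for each $\bar{x}\in S$ the hammock sink $\tau_{\perp}^{-1}\bar{x}$ lies in the same $\tau_{\perp}$-orbit as $\bar{x}$ and belongs to $-S_{\perp}$, while any interior hammock vertex is either already in $-S_{\perp}$ or has a $\tau_{\perp}$-translate that is. (ii) No $\tau_{\perp}$-orbit meets $-S_{\perp}$ twice: if two distinct vertices of $-S_{\perp}$ lay in a common $\tau_{\perp}$-orbit, $\tau_{\perp}$-translating back through the hammocks would produce two distinct vertices of $S$ in that same $\tau_{\perp}$-orbit, and the commutation $\tau\tau_{\perp}=\tau_{\perp}\tau$ from Lemma \ref{commtau} would then force those two vertices to lie in a common $\tau$-orbit, contradicting the complete $\tau$-slice property of $S$.

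For the arrow structure I would use mesh generation in $\olQ^{\perp}$: every arrow of $D(S+)$ is either an arrow of $S$, an arrow of the $\tau_{\perp}$-slice $-S_{\perp}$, or an interior hammock arrow crossing between them. A crossing arrow cannot emanate from a source of $S$, since the outgoing $\olQ^{\perp}$-arrows at a source of $S$ are absorbed as $S$-arrows via the $\tau$-mesh relations rather than escaping into the hammock interior, and dually it cannot land at a sink of $-S_{\perp}$, whose incoming arrows come from $-S_{\perp}$ itself. The statement for $D(-S)$ follows by applying the first part to the complete $\tau$-slice $\tau_{\perp}S$ and combining with the duality $D(-S)=D((\tau_{\perp}S)+)$. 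The main obstacle I anticipate is step (ii) of the main claim: showing that interior vertices of distinct hammocks $H^{\bai}$ and $H^{\baj}$ do not collide on a common $\tau_{\perp}$-orbit is not visible from convexity alone, and hinges on the full combinatorial interaction of the two translations provided by Lemma \ref{commtau}, together with the $\tau$-slice property of $S$.
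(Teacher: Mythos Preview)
Your overall plan coincides with the paper's: show that $-S_{\perp}$ is a complete $\tau_{\perp}$-slice (meets every orbit, at most once, and is convex), then obtain the $D(-S)$ statement by duality. The paper is in fact terser than you are: it gets ``every orbit'' by a short connectivity argument (choose $\baj$ on an unmet orbit with an arrow into $-S_{\perp}$; then $\baj$ must lie in $S$, so $\tau_{\perp}^{-1}\baj\in -S_{\perp}$, contradiction), derives convexity from that of the source-removed hammocks, asserts ``at most once'' simply \emph{by the construction}, and does not spell out the crossing-arrow clause at all. Your hammock-sink approach to (i) and your discussion of the crossing arrows go somewhat beyond what the paper writes down, though note that your (i) only shows that orbits meeting $D(S+)$ also meet $-S_{\perp}$; you still need the connectivity step to reach orbits lying entirely outside $D(S+)$.

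The genuine gap is in step (ii), and it is exactly the one you flag as the main obstacle. Neither half of your proposed argument holds. First, there is no canonical way to ``$\tau_{\perp}$-translate a vertex of $-S_{\perp}$ back through the hammocks'' to a vertex of $S$ in the same $\tau_{\perp}$-orbit: the source $\bai$ of a hammock containing $v$ is not, in general, $\tau_{\perp}$-related to $v$. Second, even if you had two vertices of $S$ in a common $\tau_{\perp}$-orbit, the commutation $\tau\tau_{\perp}=\tau_{\perp}\tau$ does not force them into a common $\tau$-orbit; already on the vertex set $\zZ$ with $\tau(t)=t-(n+1)$ and $\tau_{\perp}(t)=t-(q+1)$ the translations commute while their orbit partitions (residues mod $n+1$ versus mod $q+1$) are completely different. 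What actually makes ``at most once'' work is the hammock structure in the nicely-graded setting declared just before the proposition: a single $\tau_{\perp}$-hammock $H^{\bai}$ meets any $\tau_{\perp}$-orbit only at its endpoints $\bai$ and $\tau_{\perp}^{-1}\bai$, since the grading of $H^{\bai}$ spans exactly $[d(\bai),d(\bai)+q+1]$ while $\tau_{\perp}^{-1}$ shifts the grading by $q+1$; removing the sources $\bai\in S$ and taking the union then gives a set in which no $\tau_{\perp}$-orbit can recur. This is presumably the content of the paper's ``by the construction'', and you should replace the commutation argument by this degree consideration.
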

\begin{proof}
We prove the first assertion, the second follows dually.

We prove that $-S_{\perp}$ forms a $\tau_{\perp}$-slice in $\zzs{n-1} Q^{\perp}$.
If there is a vertex $\baj$ such that the $\tau_{\perp}$-orbit of  $\baj$ has no intersection with $-S_{\perp}$, we may choose $\baj$ such that there is an arrow $\xa:\baj\to \bai$ such that $\bai$ is in $-S_{\perp}$.
But this force $\baj$ in $S$, so $\tau_{\perp}^{-1}\baj$ is in $-S_{\perp}$, this is a contradiction.
So $-S_{\perp}$ intersect each $\tau_{\perp}$-orbit by the construction, $-S_{\perp}$ intersects each $\tau_{\perp}$-orbit at most once.

Now  $-S_{\perp}$ is obtained as a union of $\tau_{\perp}$-hammocks with their sources removed, it is convex since each of these source removed $\tau_{\perp}$-hammocks are convex, and all the removed sources form a convex set which preceding  $-S_{\perp}$.

This proves that $-S_{\perp}$ forms a $\tau_{\perp}$-slice in $\zzs{n-1} Q^{\perp}$.
\end{proof}

\begin{pro}\label{dbslicecst} $D(S+)$ and $D(-S)$ are double slices.
\end{pro}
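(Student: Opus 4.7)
The plan is to verify the two defining properties of a double slice for $D(S+)$; the argument for $D(-S)$ will follow by a dual argument (indeed $D(-S)$ is of the form $D(\tau_\perp^{-1}S+)$, so proving the result for $D(S+)$ for an arbitrary complete $\tau$-slice $S$ suffices). By Proposition \ref{dbslicetwo} I have the vertex decomposition $D(S+)_0 = S_0 \sqcup (-S_\perp)_0$, where $S$ is a complete $\tau$-slice and $-S_\perp$ is a complete $\tau_\perp$-slice whose vertices are exactly the non-source vertices of the $\tau_\perp$-hammocks $H^{\bai}$, $\bai\in S$.

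First I would establish the ``at most one'' condition: for every $\bar{i}\in D(S+)$, at most one of $\tau^{-1}\bar{i}$ and $\tau_\perp\bar{i}$ belongs to $D(S+)$. Split cases using the decomposition above. If $\bar{i}\in S$, then $\tau^{-1}\bar{i}\notin S$ because $S$ is a complete $\tau$-slice; and $\tau^{-1}\bar{i}\notin -S_\perp$ either, because the vertices of $-S_\perp$ are reached from $S$ by nontrivial paths inside $\tau_\perp$-hammocks, while the commutation $\tau\tau_\perp=\tau_\perp\tau$ of Lemma \ref{commtau} places $\tau^{-1}\bar{i}$ in the shifted $\tau_\perp$-orbit configuration that lies outside $-S_\perp$. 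Hence only $\tau_\perp\bar{i}$ (the terminal vertex of $H^{\bai}$) may lie in $D(S+)$. If instead $\bar{i}\in -S_\perp$, the complete $\tau_\perp$-slice property of $-S_\perp$ forces $\tau_\perp\bar{i}\notin -S_\perp$, and $\tau_\perp\bar{i}\notin S$ (again by Lemma \ref{commtau} and the fact that $S\subset D(S+)$ is the ``source side'' of all the hammocks); thus only $\tau^{-1}\bar{i}$ can remain inside $D(S+)$.

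Next I would prove maximality. Using convexity of $D(S+)$ (the preceding proposition with the same numbering we just proved), one sees that $D(S+)$ is bounded by the two parallel complete $\tau$-slices $S$ and $\tau_\perp S$: any vertex $\bar{j}\notin D(S+)$ lies strictly on one of the two sides of this band in the $\tau_\perp$-direction. If $\bar{j}$ lies just before $S$, then the arrow structure of the hammocks in $\zzs{n-1}Q$, together with Lemma \ref{commtau}, forces some $\bar{i}$ in the enlarged subquiver $D(S+)\cup\{\bar{j}\}$ to have both $\tau^{-1}\bar{i}$ and $\tau_\perp\bar{i}$ inside; symmetrically for $\bar{j}$ just past $\tau_\perp S$. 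Thus $D(S+)$ cannot be enlarged without violating the at-most-one condition, so it is a double slice.

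The main obstacle will be nailing down the exact positions of $\tau^{-1}\bar{i}$ and $\tau_\perp\bar{i}$ for vertices $\bar{i}$ lying in the interior of a $\tau_\perp$-hammock $H^{\bai}$, where both translations can apriori land anywhere on neighbouring hammocks. The essential tools for this bookkeeping are the commutation relation of Lemma \ref{commtau}, the convexity of $D(S+)$ established just before, and the precise description of $-S_\perp$ as a $\tau_\perp$-slice from Proposition \ref{dbslicetwo}; once these are combined, every $\bar i$ in each hammock is pinned to exactly one of the two ``exit directions'', which yields the double-slice property.
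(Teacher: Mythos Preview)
Your case analysis for the ``at most one'' condition is incorrect as stated, and the error traces to a confusion between $\tau_{\perp}$ and $\tau_{\perp}^{-1}$. You write that for $\bar{i}\in S$ the terminal vertex of $H^{\bar{i}}$ is $\tau_{\perp}\bar{i}$, but by definition the $\tau_{\perp}$-hammock starting at $\bar{i}$ has terminal vertex $\tau_{\perp}^{-1}\bar{i}$; likewise the second bounding $\tau$-slice of $D(S+)$ is $\tau_{\perp}^{-1}S$, not $\tau_{\perp}S$. With the correct orientation both of your case claims fail. In the paper's own Example~\ref{E:one} (the case $n=2$, $q=1$, slice $S_1$) take $\bar{i}=(1,0)\in S_1$: then $\tau^{-1}\bar{i}=(1,3)$ lies in $D(S_1+)$, contradicting your first case. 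Take $\bar{i}=(1,3)\in -S_{\perp}$: then $\tau_{\perp}\bar{i}=(6,1)\in S_1\subset D(S_1+)$, contradicting your second case. So the decomposition $S\sqcup(-S_{\perp})$ does \emph{not} determine which of $\tau^{-1}\bar{i}$, $\tau_{\perp}\bar{i}$ leaves $D(S+)$; for interior vertices either possibility can occur.

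The paper avoids this trap by not predicting which translate is excluded. It argues directly from the construction: the only vertices $\bar{i}$ of $D(S+)$ for which $\tau_{\perp}^{-1}\bar{i}$ is also in $D(S+)$ are the sources of the hammocks, i.e.\ the vertices of $S$, and for those $\tau\bar{i}$ lies strictly before the $\tau$-slice $S$ and hence outside $D(S+)$. For maximality the paper examines only vertices $\bar{j}\notin D(S+)$ adjacent to one of the two bounding $\tau$-slices $S$ or $\tau_{\perp}^{-1}S$, and in each case exhibits a concrete vertex already in $D(S+)$ (namely $\tau^{-1}\bar{j}$ or $\tau\bar{j}$) whose forbidden pair would become complete upon adjoining $\bar{j}$. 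Your maximality sketch is in the right spirit but inherits the same orientation error in naming the far boundary.
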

\begin{proof} We prove that $D(S+)$ is a double slice.
The other assertion follows from duality.

For each vertex $\bai$ in $D(S+)$, if $\bai $ and $\tau_{\perp}^{-1} \bai$ are in $D(S+)$, then by the construction $\bai$ is in $S$, so $\tau \bai$ is not $S$ and hence not in $D(S+)$.

Now  $\tau_{\perp}^{-1} S$  also forms a complete $\tau$-slice isomorphic to $S$.
If $\baj$ is a vertex not in $D(S+)$ such that there is an arrow from $\baj$ to a vertex $\bai'$ in $S$, then we have $\tau^{-1} \baj $ is in $S$ and  $\tau_{\perp}^{-1}\tau^{-1} \baj $ in $D(S+)$.
If $\baj$ is a vertex not in $D(S+)$ such that there is an arrow from $\bai'$ in $\tau_{\perp}^{-1}S$ to $\baj$, then we have $\tau \baj $ in $\tau_{\perp}^{-1}S$ and  $\tau_{\perp}\tau \baj $ in $S$.

This shows that $D(S+)$ is maximal, and hence a double slice.
\end{proof}

Note that the vertex set of $\tau (-S_{\perp})$ is also in $D(S)$ and it is a complete $\tau_{\perp}$-slice in $\zzs{n-1} Q^{\perp}$.
Regard as a double translation quiver of type $(q,n)$, we also construct the double slice $D(\tau (-S_{\perp}))$ and $D(\tau^{-1} (S_{\perp}+)) $ in $\zzs{n-1} Q^{\perp}$, and obviously have the following.
\begin{pro}\label{dbslicedual}
 $D(\tau (-S_{\perp})) = D(S+)^{\perp}$, and  $D(\tau^{-1} (S_{\perp}+)) = D(-S)^{\perp}$.
\end{pro}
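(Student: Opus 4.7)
The equality $D(\tau(-S_{\perp}))=D(S+)^{\perp}$ between full subquivers reduces to an equality of vertex sets, since quadratic dualization changes only the relation set and leaves the underlying vertices and arrows of $\olQ$ and $\olQ^{\perp}$ intact. Here $D(S+)$ is built inside the type $(n,q)$ structure by adjoining to the $\tau$-slice $S$ all $\tau_{\perp}$-hammocks $H^{\bai}$ with $\bai\in S_{0}$, whereas $D(\tau(-S_{\perp}))$ is built inside the type $(q,n)$ structure by viewing $\tau(-S_{\perp})$ as a complete $\tau_{\perp}$-slice in $\olQ^{\perp}$ and adjoining the $\tau$-hammocks in $\olQ$ that start at each of its vertices. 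That $\tau(-S_{\perp})$ is a complete $\tau_{\perp}$-slice follows from $-S_{\perp}$ being one (Proposition \ref{dbslicetwo}), the commutation $\tau\tau_{\perp}=\tau_{\perp}\tau$ (Lemma \ref{commtau}), and Lemma \ref{sliceundertau}.

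By Proposition \ref{dbslicetwo} the vertex set of $D(S+)$ is $S_{0}\cup(-S_{\perp})_{0}$, and $D(S+)$ contains the two complete $\tau$-slices $S$ and $\tau_{\perp}^{-1}S$. Applying the same proposition in the type $(q,n)$ structure to the $\tau_{\perp}$-slice $\tau(-S_{\perp})$ shows that the vertex set of $D(\tau(-S_{\perp}))$ is $\tau(-S_{\perp})_{0}\cup T_{0}$ for a unique complete $\tau$-slice $T$ of $\olQ$, and that this double slice contains the two complete $\tau_{\perp}$-slices $\tau(-S_{\perp})$ and $\tau\tau_{\perp}^{-1}(-S_{\perp})$. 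The problem thus reduces to identifying $T$ with $S$ and matching the two descriptions of the common interior.

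The identification exploits the fact that a $\tau$-hammock in $\olQ$ starting at $\bai$ terminates at $\tau^{-1}\bai$, so the hammocks starting at vertices of $\tau(-S_{\perp})$ all terminate at vertices of $\tau^{-1}\tau(-S_{\perp})=-S_{\perp}$, matching the $\tau_{\perp}$-boundary of $D(S+)$. By Proposition \ref{dbslicecst} both constructions yield double slices containing $-S_{\perp}$ as one of their $\tau_{\perp}$-slices and sharing the same $\tau$-boundary shape; the maximality clause in the definition of a double slice, together with Lemma \ref{commtau}, then forces their vertex sets to coincide, giving $T=S$. The second equality $D(\tau^{-1}(S_{\perp}+))=D(-S)^{\perp}$ is proved by exactly the same argument with ``source'' and ``sink'' interchanged and $\tau$ replaced by $\tau^{-1}$, using the second half of Proposition \ref{dbslicetwo}.

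The main obstacle will be turning the informal ``rectangular region'' picture into a rigorous double-indexed parameterization: one wants to show that the vertices of each of the two subquivers are exactly those of the form $\tau^{-s}\tau_{\perp}^{-t}\bai$ for $\bai$ in a fixed reference slice and $(s,t)$ ranging over a common bounded rectangle in $\zZ_{\ge 0}^{2}$, with the bounds dictated by the widths of the respective hammocks. Once this parameterization is in place the equality of vertex sets is immediate from $\tau\tau_{\perp}=\tau_{\perp}\tau$, but verifying that the two rectangular index sets coincide is where the bulk of the combinatorial bookkeeping lies.
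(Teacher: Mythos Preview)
The paper gives no proof of this proposition: it is stated immediately after the remark ``Note that we have the vertex set of $\tau(-S_{\perp})$ is also in $D(S)$ and it is a complete $\tau_{\perp}$-slice \ldots\ and obviously have the following'', so your outline already supplies far more detail than the authors do.

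Your strategy---reduce to vertex sets, decompose each side via Proposition~\ref{dbslicetwo}, and match the two $\tau$- and $\tau_{\perp}$-slice boundaries---is sound and is essentially the only reasonable route. However, you correctly flag that your argument is not complete: the ``maximality forces the vertex sets to coincide'' step is asserted rather than proved, and the final paragraph admits that the rectangular double-indexing has not been carried out. As written this is a sketch, not a proof. The missing piece is not deep, but it does need to be written down: one clean way is to use the sentence the paper places just before the proposition, namely that $\tau(-S_{\perp})_{0}\subseteq D(S+)_{0}$. Combined with $(-S_{\perp})_{0}\subseteq D(S+)_{0}$ and the dual of Proposition~\ref{dbslicetwo}, this gives $D(\tau(-S_{\perp}))_{0}\subseteq D(S+)_{0}$; then both sides are double slices (Proposition~\ref{dbslicecst}, applied in $\olQ$ and in $\olQ^{\perp}$), and the definition of double slice is symmetric enough in $\tau$ and $\tau_{\perp}$ that maximality in either structure forces equality once one is contained in the other. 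That containment argument replaces your proposed ``rectangular bookkeeping'' and is what you should actually write out, rather than leaving it as an obstacle.
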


Let $Q^{\perp}$ be the bound quiver of the $n$-slice algebra $R$, then $\zzs{n-1} Q$ is a double translation quiver of type $(n,q)$, and $S=Q$ is a $\tau$-slice.
The algebra $R^{-,\perp}$ defined by $\tau(-S_{\perp})$ and $R^{+,\perp}$ defined by $\tau^{-1}(S_{\perp}+)$ are companions of $R$.
Called the {\em right} and {\em left} companion of $R$, respectively.
We usually take the right companion of $R$ as default one and write it as $\GG^c$.

Recall that the complete $\tau$-slices are related by the $\tau$-mutations defined as follow.
If $i$ is a sink of a complete $\tau$-slice $S$, we define the {\em $\tau$-mutation} $ s_i^-S$ of $S$ at $i$ as the full bound subquiver of $\olQ $ obtained by replacing the vertex $i$ by its $n$-translation $\tau i$.
If $i$ is a source of a complete $\tau$-slices $S$, define the {\em $\tau$-mutation} $s_i^+S$ of $S$ at $i$ as the full bound subquiver in $\olQ $ obtained by replacing the vertex $i$ by its inverse $n$-translation $\tau^{-1} i$.

Double slices are also related by mutations.
{\em The  mutation $s^i D$ of double slice $D$} with respect to a source $i$ is obtained by removing the vertex $i$ and add the vertex $\tau_{\perp}^{-1}\tau^{-1} i$ and the arrows to $\tau_{\perp}^{-1}\tau^{-1} i$ in $\olQ$; {\em the  mutation $s_i D$ of double slice $D$} with respect to a sink $i$ is obtained by removing the vertex $i$ and add the vertex $\tau\tau_{\perp} i$ and the arrows from $\tau\tau_{\perp} i$ in $\olQ$.

\begin{pro}\label{dbslicemut} The $\tau$-mutation $s^{\bai}$ on $S$ induces a mutation $s^{\bai}$ on $D(S+)$;  The $\tau$-mutation $s_{\bai}$ on $S$ induces a mutation $s_{\bai}$ on $D(-S)$.
\end{pro}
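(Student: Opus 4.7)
The plan is to verify the induced mutation by direct combinatorial inspection inside $\olQ$. For the first assertion, I first check that a source $\bai$ of $S$ is automatically a source of $D(S+)$: by Proposition \ref{dbslicetwo}, the arrows in $D(S+)$ incident to $S$ only go from $S$ into $-S_{\perp}$, so the property of being a source transfers from $S$ to $D(S+)$. I then apply both mutations and compare: the $\tau$-mutation of $S$ removes $\bai$ and inserts $\tau\bai$, yielding a new $\tau$-slice $S'$, and I want to identify $D(S'+)$ with the double-slice mutation $s^{\bai}D(S+)$ defined just before the proposition, which removes $\bai$ and inserts $\tau_{\perp}^{-1}\tau^{-1}\bai$ together with its incident arrows.

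The key tool is the commutativity $\tau\tau_{\perp}=\tau_{\perp}\tau$ from Lemma \ref{commtau}, which lets me transport a change on the $S$-side of $D(S+)$ to a change on the $\tau_{\perp}^{-1}S$-side via the $\tau_{\perp}$-hammocks. Concretely, the old hammock $H^{\bai}$ runs from $\bai$ to $\tau_{\perp}^{-1}\bai$, while the new hammock $H^{\tau\bai}$ runs from $\tau\bai$ to $\tau_{\perp}^{-1}\tau\bai=\tau\tau_{\perp}^{-1}\bai$, so the whole of $H^{\tau\bai}$ is $\tau$-translated from $H^{\bai}$. Combining this with the convexity established in Proposition \ref{dbslicecst} and with the $\tau$-maturity of double slices in Proposition \ref{dbslice}, the symmetric difference between $D(S+)$ and $D(S'+)$ collapses (up to the fixed identifications) to a single vertex removed and a single vertex adjoined, the latter being forced by the $n$-translation structure to be $\tau_{\perp}^{-1}\tau^{-1}\bai$, and the incident arrows then match automatically.

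The main obstacle is the bookkeeping of the interior vertices of the hammocks $H^{\bai}$ and of the neighbouring hammocks $H^{\bar j}$ for $\bar j\in S\setminus\{\bai\}$, since a source of the $\tau$-slice $S$ need not be a source of $\olQ$, so a priori these hammocks may be large. I expect this to reduce to a routine verification using the explicit description of $\zzs{n-1}Q$ together with Lemma \ref{sliceundertau}, which guarantees that the $\tau_{\perp}$-shifted copies of $S$ fit coherently with $S$ inside the double translation quiver. The second assertion, that $s_{\bai}$ on $\tau_{\perp}^{-1}S$ induces $s_{\bai}$ on $D(-S)$, then follows by the same argument after exchanging the roles of $\tau$ and $\tau_{\perp}$; this exchange is made rigorous by Proposition \ref{dbslicedual}, which identifies $D(-S)^{\perp}$ with a double slice on the quadratic dual side and transfers source-mutations to sink-mutations in the opposite direction.
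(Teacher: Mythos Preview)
Your overall plan—form $D((s^{\bai}S)+)$ and compare it with the double-slice mutation $s^{\bai}D(S+)$—is exactly the paper's strategy. But two things go wrong in the execution.

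First, a direction issue. You have the $\tau$-mutation at a source $\bai$ inserting $\tau\bai$, and then you compare the hammocks $H^{\bai}$ and $H^{\tau\bai}$. The paper's proof (and the definition of the double-slice mutation, which inserts $\tau_{\perp}^{-1}\tau^{-1}\bai$) works with $\tau^{-1}\bai$ instead. The paper's own conventions are admittedly not fully consistent here, but your observation that $H^{\tau\bai}$ is the $\tau$-translate of $H^{\bai}$ points the wrong way and is not what is used; the relevant new hammock is $H^{\tau^{-1}\bai}$.

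Second, and more substantively, you correctly isolate the crux—that the symmetric difference of $D(S+)$ and $D((s^{\bai}S)+)$ consists of just the two vertices $\bai$ and $\tau_{\perp}^{-1}\tau^{-1}\bai$—but then defer it as ``routine verification'' via convexity, $\tau$-maturity, and Lemma~\ref{sliceundertau}. None of those results gives you this; they tell you that $D(S+)$ and $D((s^{\bai}S)+)$ are both nice objects, but not how their vertex sets differ. The paper supplies the missing step directly: if $\baj_1,\dots,\baj_h$ are the out-neighbours of $\bai$ in $S$, then (using $n,q\ge 2$) every vertex of the $\tau_{\perp}$-hammock $H^{\bai}$ other than $\bai$ itself already lies in $\bigcup_s H^{\baj_s}$, so deleting $H^{\bai}$ from the union removes only $\bai$. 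Dually, every vertex of $H^{\tau^{-1}\bai}$ other than its terminal vertex $\tau_{\perp}^{-1}\tau^{-1}\bai$ already lies in the hammocks of the in-neighbours $\baj'_{s'}$ of $\tau^{-1}\bai$, so adjoining $H^{\tau^{-1}\bai}$ adds only that one vertex. This single covering observation is the entire content of the proof; the commutativity of $\tau$ and $\tau_{\perp}$, convexity, and Proposition~\ref{dbslicedual} are not needed. The second assertion follows by the obvious duality, not by passing to the quadratic-dual side.
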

\begin{proof}
We prove the first assertion, the second follows from duality.

By apply $\tau$-mutation $s^{\bai}$ on $S$ to get $s^{\bai}S$, we remove the source $\bai$ from $S$ and add $\tau^{-1}\bai$ as a sink of $s^{\bai}S$.

Since $n, q\ge 2$, if there are arrow $\bai \to \baj_s$, $s= 1,\cdots,h$ for some $h$, then $\baj_s$ is in $S$ and all the vertices of the $\tau_{\perp}$-hammocks $H^{\bai}$ except $\bai$ is contained the union of these $H^{\baj_s}$.
Similarly, for the arrows $\baj'_{s'} \to \tau^{-1} \bai$, all the vertices of the $\tau_{\perp}$-hammocks $H^{\tau^{-1}\bai}$ except $\tau^{-1}_{\perp}\tau^{-1}\bai$ are contained the union of these $H^{\baj'_{s'}}$.

This proves $s^{i} D(S+) = D(s^{i} S+)$.
\end{proof}

By Lemma 6.5 of \cite{g12} and the above Proposition, we have the following result.
\begin{pro}\label{dbslicemutdb}
Let $D$ and $D'$ be two double slice of a stable double translation quiver.
Then there is a sequences of mutations $s_1,\cdots,s_l$ such that $s_l\cdots s_1 D =D'$.
\end{pro}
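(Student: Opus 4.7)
The plan is to reduce the problem to the analogous connectivity statement for complete $\tau$-slices, which is provided by Lemma 6.5 of \cite{g12}, and then transfer that connectivity up to double slices by means of Proposition \ref{dbslicemut}.

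First, I would argue that every double slice $D$ in the stable double translation quiver $\olQ$ has the form $D(S+)$ for some complete $\tau$-slice $S$. Starting from $D$, let $S$ be the full subquiver on those vertices $\bai\in D$ for which $\tau_{\perp}\bai\notin D$. Using the maximality built into the definition of a double slice together with the commutation relation $\tau\tau_{\perp}=\tau_{\perp}\tau$ from Lemma \ref{commtau}, I expect $S$ to meet each $\tau$-orbit exactly once and to be convex, hence a complete $\tau$-slice; the remaining vertices of $D$ are then filled in by the $\tau_{\perp}$-hammocks emanating from the vertices of $S$, as described in Proposition \ref{dbslicetwo}, giving $D=D(S+)$.

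Next, given two double slices $D=D(S+)$ and $D'=D(S'+)$, Lemma 6.5 of \cite{g12} supplies a finite sequence of $\tau$-mutations $s_{i_1}^{\epsilon_1},\ldots,s_{i_l}^{\epsilon_l}$ with $s_{i_l}^{\epsilon_l}\cdots s_{i_1}^{\epsilon_1} S=S'$. By Proposition \ref{dbslicemut}, each elementary $\tau$-mutation $s_i^{\pm}$ applied to the underlying $\tau$-slice lifts to the corresponding double-slice mutation via $s_i^{\pm}D(S+)=D(s_i^{\pm}S+)$. Iterating and composing these lifts produces a sequence of double-slice mutations carrying $D$ to $D'$, proving the proposition.

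The main obstacle will be the first step: verifying rigorously that an arbitrary double slice, defined purely by the maximality condition, is indeed of the form $D(S+)$. One has to rule out pathological maximal subquivers that do not arise from a $\tau$-slice plus its hammock attachments; this requires using both directions of the axiom (that neither $\tau^{-1}\bai$ nor $\tau_{\perp}\bai$ lies in $D$ beyond the single allowed case) together with the convexity of hammocks in a stable $n$-translation quiver. Once this structural description of double slices is in hand, the remainder of the argument is a routine transport of the known mutation connectivity for $\tau$-slices through the lifting in Proposition \ref{dbslicemut}.
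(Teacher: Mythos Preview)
Your proposal is correct and follows essentially the same route as the paper, which simply invokes Lemma~6.5 of \cite{g12} together with Proposition~\ref{dbslicemut}. You are more explicit than the paper about the reduction step---showing that an arbitrary double slice has the form $D(S+)$ for some complete $\tau$-slice $S$---which the paper leaves entirely implicit; your identification of this as the main point needing care is apt, since without it Proposition~\ref{dbslicemut} (stated only for $D(S+)$) does not directly apply.
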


Now assume that $Q$ is nicely-graded and connected, then $\zzs{n-1} Q$ and $\zzs{n-1} Q^{\perp}$ are both nicely-graded and connected.
So these quivers  are connected component of $\zZ_{\vv}\tQ$.
Using the index of  the vertices of  $\zZ_{\vv}\tQ$ for the vertices of $\zzs{n-1} Q$ and $\zzs{n-1} Q^{\perp}$.
Now for any $t$,  $\zZ_{\vv}\tQ[t,t+q+n+1]$ is a double slice of $\zZ_{\vv}\tQ$, and we can take $D^0=\zZ_{\vv}\tQ[-n,q+1]$, then any double slice can be obtained from $D^0$ by a sequences of mutations by Proposition \ref{dbslicemutdb}.

\section{The higher preprojective components\label{hprec}}

Now we study the representations of the pair of higher slice algebras of finite type, showing that they share the same quiver as the Auslander-Reiten quiver for their higher preprojective components.

For a finite dimensional algebra $\GG$ of global dimension $n$, the $n$-Auslander-Reiten translations of $\GG$ modules are introduced by Iyama (see \cite{i007,i07}), $$\ttn{-1} =\tr D\OO^{1-n} = \Ext_{\GG}^n(D -, \GG)\quad and \quad \ttn{}   = D\tr \OO^{n-1} =D\Ext_{\GG}^n(-, \GG), $$ with the convention that $\ttn{0}=\ttn{-0}=\identity$.

Modules in  \eqqcn{}{ \caM^+ (D\GG) =\add \{ \ttn{t} D\GG | t \ge 0\}  \mbox{ and }  \caM^- (\GG) =\add \{ \ttn{-t} \GG | t \ge 0\}}
are called {\em $n$-preinjective modules  of $\GG$} and {\em $n$-preprojective modules  of  $\GG$}, respectively  (see \cite{hio14}).
These modules  are natural generalization of preinjective and preprojective modules, respectively, thus these classes  are  called {\em $n$-preprojective component} and {\em $n$-preinjective component}, respectively.

\medskip

Now assume that $\GG$ is an acyclic $n$-slice algebra of finite type with bound quiver $Q^{\perp}$, then $\GG$ is $n$-representation-finite.
By Proposition 1.12 of \cite{i11}, we have that $\caM^+(D\GG)=\caM^-(\GG)$,  which we write as $\caM(\GG)$.
By Proposition 2.3 of \cite{io11}, $\caM(\GG)$ is an $n$-cluster tilting subcategory and by Theorem 3.3.1 of \cite{i007},  $\caM(\GG)$ has $n$-almost split sequences.

Assume that $\GG$ is nicely-graded and connected, so $Q$ is nicely-graded connected quiver  and each connected components of $\zzv\tQ$ is isomorphic to $\zzs{n-1}Q$, which is an $n$-translation quiver, write its $n$-translation as $\tau$.

Let $\olL$ be the algebra defined by the bound quiver  $\zzs{n-1} Q$ and let $\olG$ be the algebra defined by the bound quiver $\zzs{n-1} Q^{\perp}$.
Since $\tL$ is an $n$-translation algebra, $\olL$ is an $n$-translation algebra with Coxeter index $q+1$, by Theorem 5.3 of \cite{g16}.
So $\olG$ is $(q+1,n+1)$-Koszul.
By Corollary \ref{zzsnq}, $\zzs{n-1} Q^{\perp}$ is a $q$-translation quiver, write $\tau_{\perp}$ for its $q$-translation.
Identify $Q$ with the complete $\tau$-slice $\zzs{n-1} Q[0]$ of $\zzs{n-1}Q$ and write $i$ for the vertex $(i,0)$ in $Q$  when needed.

\medskip

Let $d$ be a function $d:\zzs{n-1}Q\to \zZ$ such that $d(j,t')=d(i,t)+1$ if there is an arrow from $(i,t)$ to $(j,t')$, we may assume that $\mathrm{max}\{d(i)|i \in Q_0\}=0$. 
Fix a vertex $(i,t)$ in $\zzs{n-1}Q$, set $$\mfd_{(i,t)}(j,t') = (t-t')(n+1)+d(i)-d(j).$$
Since $\zzs{n-1}Q$ is nicely-graded, a $\tau$-hammock $H_{(i,t)}$ \cite{g12,gll19b} in $\zzs{n-1} Q$ ending at a vertex $(i,t)$ is identified with the full subquiver of $\zzs{n-1} Q$  with vertex set 
\small
$$H_{(i,t), 0}=\left\{ (j,t') \mid (j,t') \in \zzs{n-1}Q_0, \exists \mbox{ path } 0\neq p\in \olL_{\mfd_{(i,t)}(j,t')}, s(p)=(j,t'), t(p)=(i,t) \right\}.$$ 
\normalsize
The hammock function $\mu_{(i,t)}: H_{(i,t),0} \longrightarrow \zZ$ is an integral map on the vertices defined by $$\mu_{(i,t)} (j,t')= \dmn_k e_{(i,t)} \olL_{\mfd_{(i,t)}(j,t')} e_{(j,t')}.$$

For each $(i,t) $ in $\zzs{n-1}Q$, we have a Koszul complex
\eqqc{nassinsubG}{\mathbf{M}_{(i,t)}: M_{n+1}=\olG e_{(i,t)}  \slrw{f_{n+1}} \cdots \to M_r \to \cdots \slrw{f_0} M_0= \olG e_{(i,t-1)} }
in $\add \olG$,  with $$M_r = \bigoplus_{\stackrel{(j,t')\in H_{(i,t),0}}{ \mfd_{(i,t)}(j,t')=n+1-r}} (\olG e_{(j,t')})^{\mu_{(i,t)}(j,t')} $$ for $0\le r\le n+1$.
This is the projective resolution of the simple $\olG$-module $\olG_0 e_{(i,t-1)}$(Proposition 2.1 of \cite{gll19b}, see also Proposition 7.4 of \cite{g16}).
Since $\olG$ is $(q+1,n+1)$-Koszul algebra, we have that $$\Ker f_{n+1} \simeq\olG_0 e_{\tau_{\perp}^{-1}(i,t)},$$ the simple $\olG$-module $\olG e_{\tau_{\perp}^{-1} (i,t)}$ corresponding to the vertex $\tau_{\perp}^{-1} (i,t)$. 

It is obvious that, regarding as a bound quiver, each convex full subquiver $T$ of $\zzs{n-1} Q$ is an $n$-translation quiver and it defines an $n$-translation algebra $\LL(T)$ with Coxeter number $q+1$ (so it is an $(n+1,q+1)$-algebra).
Write its quadratic dual as $\GG(T)$. 
Then $$\LL(T)= e_T\olL e_T \mbox{ and }\GG(T) = e_T\olG e_T$$ for the idempotent $e_T=\sum_{(i,t)\in T_0} e_{(i,t)}$.
Let $T$ be a convex full subquiver with $Q$  a terminal complete $\tau$-slice, then $\add \GG$ is full subcategory of $$\caG(T) = \add\GG(T) \simeq \add \{\olG e_{(i,t)}\mid (i,t)\in T_0\}.$$
By Lemma 7.1 of \cite{g16}, we have the following Lemma. 
\begin{lem}\label{nassT}
If $H_{(i,t)}$ is in $T$, then $e_{T} \bM{(i,t)}$ is an $n$-almost split sequence in $\caG(T)$ if and only if $\Ker f_{n+1}|_T =0$.
\end{lem}

Let $D = D(-Q)$ be the double slice with $Q$ a terminal complete $\tau$-slice, then $D^{op}$ is a double slice in $\zzs{n-1} Q^{op}$.
So we have that $Q^{op}$ is embedded into $D^{op}$ as an initial complete $\tau$-slice.

Let  $\caG(D)$ be the bound path category  defined by the bound quiver $D$ with relation set the restriction of $\rho^{\perp}_{\zzs{n-1}Q}$.
We have that $$\caG(D) \simeq \add\{\olG e_{(j,r)} | (j,r) \in D_0 \} \simeq  \add\{e_D \olG e_{(j,r)} | (j,r) \in D_0 \}$$ by Proposition 2.6 of \cite{gll19b}.
Since $D$ is a double slice, if $(i,-t)$ and $(i,-t-1)$, are in $D$, then $\tau^{-1}_{\perp}(i,-t)$ is not in $D$ and $\Ker f_{n+1}\mid_D =0$ in  $e_D\bM{(i,-t)}$, by Lemma \ref{nassT} and Theorem 7.5 of \cite{g16}, we have the following Lemma.

\begin{lem}\label{nassGD} $\caG(D)$ has $n$-almost split sequence.

If both $(i,-t)$ and $(i,-t-1)$ are in $D$, then $e_D\bM{(i,-t)}$ \eqref{nassinsubG} is an $n$-almost split sequence in  $\caG(D)$. 
\end{lem}

Let $e=e_Q$, then  $\GG = e\olG e $.
Write $Q(\caM(\GG))$ for the full subquiver of $\zZ|_{n-1} Q^{op}$ with vertex set  \eqqcn{}{Q_0(\caM(\GG)) = \{(j,-r) \in \zZ|_{n-1} Q^{op}_0 \quad | \ttn{-r} \GG e_j  \neq 0\}} for $j \in Q^{op}_0$.
\begin{lem}\label{QcaM}
The bound subquiver $Q(\caP(\GG)) $ of $\zzs{n-1}Q^{op}$ is  convex.
\end{lem}
\begin{proof}
Write $Q(\caP(\GG))=Q $ for the full subquiver of $Q(\caM(\GG)) $ with vertex set $$Q_0(\caP(\GG)) = \{(j,0) \in \zZ|_{n-1} Q^{op}_0 \quad | j \in Q^{op}_0\} $$ and $Q(\caI(\GG)) $ for the full subquiver of $Q(\caM(\GG)) $ with vertex set $$Q_0(\caI(\GG)) = \{(j,-r) \in Q_0(\caM(\GG))  \quad | \ttn{-r-1} \GG e_j  = 0  \} .$$ 
By Proposition 1.12 of \cite{i11}, $Q(\caP(\GG))$ and   $Q(\caI(\GG))$ are isomorphic, and they form complete $\tau$-slices. By definition, the vertices in $Q(\caM(\GG))$ are exactly vertices lying between $Q(\caP(\GG))$ and  $Q(\caI(\GG))$, so $Q(\caM(\GG))$ is convex.
\end{proof} 

The vertices of $\zzs{n-1}Q$ is totally ordered such that $(i,t) < (j,t')$ if $t < t'$, or $t=t' $ and $d(i) < d(j)$.
For a convex subquiver $T$, write $T(i,t)$ the subquiver of $T$ consisting of the vertives $(j,t')$ with $(j,t')> (i,t)$, and  $T[i,t]$ the subquiver of $T$ consisting of the vertives $(j,t')$ with $(j,t') \ge (i,t)$.
Let $(i_0,0)$ be the element in $Q$ which is maximal with respect to this order.

Now we show that the subquivers $Q(\caM(\GG))$ and $D^{op}$ are the same.

\begin{lem}\label{eqDqM} $D^{op} = Q(\caM(\GG)$
\end{lem}

\begin{proof}
We prove that they have the same vertex set, that is $D_0 = Q_0(\caM(\GG))$.

Note that $\caG(D)$ is an Orlov category, so $\Hom_{\caG(D)}(\olG e_{(i,-t)},\olG e_{(j,-t')} = 0$ whenever $(j, -t') > (i,-t)$. 
By  Proposition 5.3 of \cite{gll19b}, $\caM(\GG)$ is an Orlov category, with respect to the order defined above.
Set  
\small $$\caG(D)(j,-r) = \add\{\olG e_{(i,-t)}\mid (i,-t) \in D_0(j,-r)\},$$ $$\caG(D)[j,-r] = \add\{\olG e_{(i,-t)}\mid (i,-t) \in D_0[j,-r]\},$$ 
$$\caM(\GG)(j,-r) = \add\{\ttn{-t}\GG e_{i}\mid (i,-t) \in Q_0(\caM(\GG))(j,-r)\}$$\normalsize and \small$$\caM(\GG)[j,-r] = \add\{\ttn{-t}\GG e_{i}\mid  (i,-t) \in Q_0(\caM(\GG))[j,-r]\}.$$\normalsize 
Then  \small$$D^{op}(i_0,-1)=Q^{op} =Q(\caM(\GG))(i_0,-1),$$  $$\caG(D)(i_0,-1)  \simeq \caM(\GG)(i_0,-1) \simeq \add \GG,$$\normalsize  and we have for each $j\in Q_0$, $$e\olG e_{(j,0)} = \GG e_j.$$

Now assume for a vertex $(i,-r-1)$, $$\caG(D)(i,-r-1) \simeq \caM(\GG)(i,-r-1)  $$ and for each $(j,-t) \in D_{0}(i,-r-1)$, $$e\olG e_{(j,-t)} \simeq \ttn{-t}\GG e_j.$$
If $(i,-r),(i,-r-1)\in D_0$, then $e_D \bM{(i,-r)}$ is an $n$-almost split sequence in $\caG(D)$, by Lemma \ref{nassGD}.
So $e \bM{(i,-r)}$ is a sink sequence in $\caG(D)[i,-r-1]$, hence also a  sink sequence in $\caM(\GG)[i,-r-1]$. 
Since $\caM(\GG)$ is Orlov, it is a sink sequence in $\caM(\GG)$.
By Proposition 3.3 of \cite{i007},  $e \bM{(i,-r)}$ is an $n$-almost split sequence in  $\caM(\GG)$ and by Theorem 3.3.1 of \cite{i007}, $$e \olG e_{(i,-r-1)} \simeq \ttn{-1}e\olG e_{(i,-r)} \simeq  \ttn{-r-1}\GG e_{i}.$$

So $\caG(D)[i,-r-1]$ is equivalent to $\caM(\GG)[i,-r-1]$.
Inductively, we see $$D_0 \subset Q_0(\caM(\GG))$$ and $$\caG(D) \simeq \add \{\olG e_{(i,-t)}\mid (i,-t)\in D_0\}$$ is regarded as a full subcategory of $$\caM(\GG) \simeq \add\{\olG e_{(i,-t)}\mid (i,-t)\in  Q_0(\caM(\GG))\}.$$

If $(i,-r-1) \not \in D_0$, we have both $\tau^{-1}(i,-r-1)=(i,-r)$ and $\tau_{\perp}(i,-r)=(i',-t')$ are in $D$, since $D$ is double slice. 

Since $\bM{(i,-r)}$ is a sink sequence in $\add \olG$, it is a sink sequence in $\add\{\caG(D),\olG e_{(i,-r-1)}\}$, since $\add \olG$ is Orlov. 

If $(i,-r-1) \in Q_0(\caM(\GG))$ then $\ttn{-r-1} \GG e_i \neq 0$, $e\bM{(i,-r)}$ is a sink sequemce and hence an $n$-almost split sequence in $\caM(\GG)$, by  Proposition 3.3 of \cite{i007} and $$e\olG e_{(i,-r-1)} \simeq \ttn{-r-1}\GG e_{i}.$$
So $(e_D+e_{(i,-r-1)}) \bM{(i,-r)}$ is also a source sequence in $\caG(D\cup(i,-r-1))$, where $D\cup(i,-r-1)$ is the full bound subquiver of $\zzs{n-1}Q$ with vertex set $D_0\cup\{(i,-r-1)\}$.
But as the complex of $\GG(D\cup(i,-r-1))$, $\Ker f_{n+1}\mid_{\GG(D\cup(i,-r-1))} \simeq \olG_0 e_{\tau_{\perp} (i,-r)}$ which is not zero since $\tau_{\perp} (i,-r)$ is in $D$.
This contradicts Lemma \ref{nassT}.

So  $(i,-r-1) \not \in Q_0(\caM(\GG))$.
This proves $D^{op} = Q(\caM(\GG)$.
\end{proof}

 As a corollary of Lemma \ref{eqDqM}, we obtain the following theorem.

\begin{lem}\label{cateq} We have the equivalence
$\caG(D) \simeq \caM({\GG})$.
\end{lem}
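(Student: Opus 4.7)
The plan is to build a $k$-linear functor $F \colon \caG(D) \to \caM(\GG)$ that sends each vertex of $D^{op}$ to the corresponding indecomposable in the $n$-preprojective component, each arrow to an irreducible morphism, and then verify that $F$ is an equivalence. First I would invoke the embedding constructed in Section 5 of \cite{gll19b}, which identifies a connected component of $\zzs{n-1}Q^{op,\perp}$ with the AR quiver of a formal $n$-preprojective component, under which the initial complete $\tau$-slice $Q^{op}$ corresponds to the indecomposable projective $\GG$-modules and the quiver translation $\tau^{-1}$ corresponds to the $n$-Auslander-Reiten translation $\ttn{-1}$. Restricting this embedding to $D^{op} \subset \zzs{n-1}Q^{op,\perp}$ produces the candidate functor $F$.

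Next I would check that the vertex set of $D^{op}$ matches, under this embedding, the isomorphism classes in $\caM(\GG)$. Since $\GG$ is $n$-representation-finite by Theorem \ref{main1}, every indecomposable in $\caM(\GG)$ has the form $\ttn{-t}P_i$ for an indecomposable projective $P_i$ and some $t \geq 0$, and $\ttn{-t}P_i = 0$ precisely once $t$ exceeds a bound determined by the $\tau_{\perp}$-hammock $H^{\bar{i}}$ starting at the vertex corresponding to $P_i$. The description of $D = D(Q+)$ in Proposition \ref{dbslicetwo} as $Q$ together with exactly these $\tau_{\perp}$-hammocks then delivers the bijection on objects, and the fact that $\GG^c$ is the companion $q$-slice (so that the far end $-S_{\perp}$ of $D$ encodes the indecomposable injectives via $Q^c$) anchors the upper bound.

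Having identified objects, I would promote $F$ to a fully faithful functor by matching morphisms and relations. Because $D$ is $\tau$-mature by Proposition \ref{dbslice}, every arrow of $D^{op}$ gives a well-defined irreducible morphism between the corresponding indecomposables in $\caM(\GG)$, and the relations of $\caG(D)$ are precisely the restriction of the mesh relations of $\zzs{n-1}Q^{op,\perp}$, which on the module side are the higher AR mesh relations. Thus $F$ is full and faithful on generators and respects the defining relations; essential surjectivity onto $\caM(\GG)$ is then immediate from the object bijection above.

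The main obstacle will be to pin down precisely the vanishing behaviour of $\ttn{-t}P_i$: namely, to show that the $\tau_{\perp}$-hammock bound coming from the $q$-translation quiver structure on $\zzs{n-1}Q^{op,\perp}$ (established via Corollary \ref{zzsnq} and the $(q+1,n+1)$-Koszul duality of Theorem \ref{suf_nec}) agrees with the vanishing locus of the $n$-Auslander-Reiten translation on projectives. This is the step where the Coxeter index $q+1$ of $\GG$ and the $q$-translation $\tau_{\perp}$ must be reconciled, and it is what forces $D$, rather than any smaller or larger convex extension of $Q$, to be the correct parameterising quiver for $\caM(\GG)$.
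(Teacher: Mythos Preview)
Your plan and the paper's argument share the same skeleton: both invoke the machinery of Section~5 of \cite{gll19b} (the paper cites Theorem~5.6 explicitly) to reduce the equivalence $\caG(D)\simeq\caM(\GG)$ to the quiver-level equality $D=Q(\caM)$, where $Q(\caM)=\{(j,r):\ttn{-r}\GG e_j\ne0\}$, with the fully faithful part absorbed into that theorem.

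Where you diverge is in how you propose to establish $D=Q(\caM)$. You want to read off the vanishing of $\ttn{-t}P_i$ directly from the $\tau_{\perp}$-hammock description of $D$ in Proposition~\ref{dbslicetwo}, and you honestly flag this as the ``main obstacle'' without indicating how to carry it out. The paper does not attempt that direct computation. Instead it proves the two inclusions separately: for $Q(\caM)\subseteq D$ it uses that $Q(\caM)$ is $\tau$-mature (so bound paths have length at most $n+1+q$) together with the \emph{maximality} of the double slice $D$; for $D\subseteq Q(\caM)$ it runs an induction, producing at each vertex of $D$ the Koszul complex of Proposition~2.1 of \cite{gll19b}, observing that $\tau$-maturity of $D$ makes this an $n$-almost split sequence in $\caG(D)$, and then transporting it to $\caM(\GG)$ via the argument of Lemma~6.3 of \cite{gll19b} to conclude $\ttn{-r}\GG e_j\ne0$.

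So the gap in your plan is not architectural but sits in the engine room: you never introduce $n$-almost split sequences or the Koszul complex, and these are precisely the tools the paper uses to certify non-vanishing of $\ttn{-r}\GG e_j$ without computing the $\tau_{\perp}$-hammock bound in advance. Your $\tau_{\perp}$-hammock description is correct as a picture of $D$, but nothing in your plan connects it to the representation-theoretic vanishing of $\ttn{-t}$; the paper's inductive $n$-almost-split-sequence argument is what supplies that connection.
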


{\bf Remark}. From the proof of Lemma \ref{cateq}, we see the assumption of ``$\tau$-mature'' in  Theorem 5.6 of \cite{gll19b} can be removed.

\bigskip

As an immediate consequence of Lemma \ref{cateq}, we have the following result on the Auslander-Reiten quiver of $n$-slice algebra of finite type.
\begin{thm}\label{narq} Let $\GG$ be an $n$-slice algebra of finite type with nicely-graded bound quiver $Q^{\perp}$.
Then the Auslander-Reiten quiver of its $n$-preprojective component of  $\GG$ is the opposite quiver of a double slice in $\zzs{n-1} Q$.
\end{thm}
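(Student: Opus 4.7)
The plan is to reduce the statement to Lemma \ref{cateq} and then unpack the quiver structure it provides. By Lemma \ref{cateq}, the category $\caM(\GG)$ of the $n$-preprojective component is equivalent as a $k$-linear category to $\caG(D)$, where $D = D(Q)$ is the double slice in $\zzs{n-1}Q$ containing $Q$ as an initial complete $\tau$-slice and $\caG(D)$ is the path category of the bound quiver $D^{op}$ with relations inherited from $\rho^{op,\perp}_{\zzs{n-1}Q}$.

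First I would identify the vertices of the Auslander-Reiten quiver of $\caM(\GG)$ with the vertices of $D^{op}$: the iso-classes of indecomposable objects in $\caM(\GG)$ are, via the equivalence $\Theta$ in the proof of Lemma \ref{cateq}, in bijection with the vertices $(j,r)\in D_0$, sending $(j,r)$ to $\ttn{-r}\GG e_j$. Here I would use Proposition \ref{dbslice} (that $D$ is $\tau$-mature) together with the argument in the proof of Lemma \ref{cateq} showing that the required indecomposables are exactly the $(j,r)$ in $D$.

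Next I would match the arrows of the AR-quiver with the arrows of $D^{op}$. Since $\caG(D) \simeq \caM(\GG)$ as categories, irreducible morphisms correspond under the equivalence. In the path category $\caG(D)$, the irreducible morphisms (those not in the square of the radical) are precisely the arrows of $D^{op}$, because the relations live in paths of length $\ge 2$ and $D$ is convex in $\zzs{n-1}Q$ so no arrows are lost by the restriction. On the $\caM(\GG)$ side, the $n$-almost split sequences constructed in the proof of Lemma \ref{cateq} (the transfer of the Koszul complex \eqref{nnassinG} to the sequence \eqref{xzero}) exhibit the meshes of the AR-quiver, and their left and right components are indexed exactly by the arrows ending at or starting from each vertex of $D^{op}$. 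Thus every mesh of the AR-quiver of $\caM(\GG)$ is a mesh of $D^{op}$, and conversely.

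The step I expect to be the main obstacle is verifying that the arrows one reads off from $D^{op}$ are neither too few nor too many to be the irreducible morphisms in $\caM(\GG)$: that no irreducible map is hidden inside a composition (handled by convexity of $D$, so that no ``shortcuts'' to $\zzs{n-1}Q$ appear), and that every arrow of $D^{op}$ genuinely gives a nonzero irreducible map (handled by $\tau$-maturity via Proposition \ref{dbslice}, which guarantees that the Koszul resolution \eqref{nnassinG} restricts to a minimal $n$-almost split sequence supported on $D$). Once these two points are in place, the theorem follows directly.
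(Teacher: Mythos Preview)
Your approach is correct and matches the paper's: the paper states Theorem~\ref{narq} as ``an immediate consequence of Lemma~\ref{cateq}'' with no further argument, and you reduce to Lemma~\ref{cateq} in exactly the same way. The additional work you do---matching indecomposables to vertices of $D^{op}$, identifying irreducible morphisms with arrows via convexity and $\tau$-maturity, and reading the meshes off the $n$-almost split sequences \eqref{nnassinG}--\eqref{xzero}---is the honest unpacking of what ``immediate'' means here, and is sound.
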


More precise, we have
\begin{thm}\label{finARq} Let $\GG$ be an $n$-slice algebra of finite type  with nicely-graded bound quiver $Q^{\perp}$ and $q+1$ as its Coxeter index and let $\caQ$ be the Auslander-Reiten quiver of its $n$-preprojective modules.
Then
\begin{enumerate}
\item There is a $q$-slice algebra  $\GG^c$ of finite type such that $\caQ$ is the Auslander-Reiten quiver of the $q$-preprojective component of  $\GG^c$.

\item $\caQ = D(Q)^{op}$.

\item As a quiver $\caQ$ is the opposite quiver of a quiver obtained by connecting the quiver $Q^{\perp}$ of $\GG$ and the quiver ${Q^c}^{\perp}$ of $\GG^c$ by some arrows.
\end{enumerate}
\end{thm}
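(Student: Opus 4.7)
The plan is to assemble the theorem by invoking three earlier results: Theorem \ref{fin} for the existence of the companion, Lemma \ref{cateq} for the identification of the AR quiver with a double slice, and Proposition \ref{dbslicedual} for matching the double slice coming from $\GG$ with the one coming from $\GG^c$. The whole argument is essentially a bookkeeping exercise on the quiver $\zzs{n-1}Q$ once these tools are in place.

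First I would fix notation: by Theorem \ref{fin} there is a $q$-slice algebra $\GG^c$ of finite type with Coxeter index $n+1$, and by Corollary \ref{zzsnq} the quiver $\zzs{n-1}Q$ coincides (as a quiver) with $\zzs{q-1}(Q^c)^{\perp}$, the only change being the relations produced by quadratic duality and the swap of $\tau$ with $\tau_{\perp}$. Since $Q^c$ arises as $\tau(-Q_{\perp})$ inside $\zzs{n-1}Q$, this makes the whole pair $(\GG,\GG^c)$ visible inside a single double translation quiver of type $(n,q)$.

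Part (2) is the direct content of Lemma \ref{cateq} applied to $\GG$: the AR quiver $\caQ$ of the $n$-preprojective component of $\GG$ is (the opposite of) the double slice $D(Q)=D(Q+)$ in $\zzs{n-1}Q$. For part (1), I would apply Lemma \ref{cateq} a second time, now to the $q$-slice algebra $\GG^c$: the AR quiver of the $q$-preprojective component of $\GG^c$ is the double slice $D(Q^c)$, this time built inside $\zzs{q-1}(Q^c)^{\perp}$ via the $q$-translation $\tau_{\perp}$. The identification $Q^c=\tau(-Q_{\perp})$ together with Proposition \ref{dbslicedual} yields $D(Q^c)=D(\tau(-Q_{\perp}))=D(Q+)^{\perp}=D(Q)^{\perp}$, and since quadratic duality keeps the underlying arrow quiver unchanged, $D(Q^c)$ and $D(Q)$ have the same underlying quiver. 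This gives (1) and simultaneously confirms (2).

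For part (3) I would unpack the structure of $D(Q+)$ given by Proposition \ref{dbslicetwo}: $D(Q+)$ is the complete $\tau$-slice $Q$ together with a complete $\tau_{\perp}$-slice $-Q_{\perp}$, joined by arrows running from non-source vertices of $Q$ to non-sink vertices of $-Q_{\perp}$. Since $-Q_{\perp}$ differs from $\tau(-Q_{\perp})=Q^c$ only by the automorphism $\tau_{\perp}$, the quiver $-Q_{\perp}$ is isomorphic to $Q^c$; substituting this isomorphism gives the desired presentation of $\caQ=D(Q)$ as the quivers $Q$ and $Q^c$ connected by some extra arrows.

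The only delicate point, and the place I expect the proof to need the most care, is the second application of Lemma \ref{cateq}: the lemma is phrased for an $n$-slice algebra inside a particular type $(n,q)$ double translation quiver, but for $\GG^c$ we must feed in a type $(q,n)$ structure, so the roles of the $n$-translation and $q$-translation are swapped and quadratic duality intervenes. Verifying that Proposition \ref{dbslicedual} matches these two versions on the nose (not merely up to isomorphism of some ambient quiver) is the step that needs checking, and everything else reduces to bookkeeping via Corollary \ref{zzsnq} and Proposition \ref{dbslicetwo}.
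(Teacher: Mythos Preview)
Your proposal is correct and matches the paper's approach: the paper gives no separate proof of this theorem, presenting it as a more precise form of Theorem~\ref{narq} that follows directly from Lemma~\ref{cateq}, Theorem~\ref{fin}, Proposition~\ref{dbslicetwo}, and Proposition~\ref{dbslicedual}, exactly the ingredients you assemble. One minor slip: in part~(3) you write that $-Q_{\perp}$ differs from $Q^c=\tau(-Q_{\perp})$ by the automorphism $\tau_{\perp}$, but it is the $n$-translation $\tau$; since $\tau$ induces a quiver automorphism of $\zzs{n-1}Q$ this does not affect the conclusion.
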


Note that the category of $n$-preprojective modules of a $n$-representation-finite algebra is an $n$-cluster tilted subcategory Proposition 2.6 of \cite{hio14}.
We have the following result.
\begin{thm}\label{hcluster} Let $\GG$ be an $n$-slice algebra of finite type with Coxeter index $q+1$, and let $\GG^c$ be its companion.
Then the $n$-cluster tilted subcategory of $\GG$ and the $q$-cluster tilted subcategory of $\GG^c$ are quadratic duals.
\end{thm}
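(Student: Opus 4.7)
The plan is to translate both cluster tilted subcategories into path categories of double slices via Lemma~\ref{cateq}, and then match these double slices using the perp-identification in Proposition~\ref{dbslicedual}.

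By Proposition~2.6 of~\cite{hio14}, for an $n$-representation-finite algebra the $n$-cluster tilted subcategory coincides with the $n$-preprojective component $\caM(\GG)$, and likewise the $q$-cluster tilted subcategory of $\GG^c$ is $\caM(\GG^c)$ (noting that by Theorem~\ref{fin} the companion $\GG^c$ is $q$-representation-finite). Applying Lemma~\ref{cateq} to $\GG$ gives an equivalence $\caM(\GG)\simeq\caG(D(Q+))$, the path category of $D(Q+)^{op}$ with relations restricted from $\rho^{op,\perp}_{\zzs{n-1}Q}$. Viewing $\GG^c$ inside the opposite double translation quiver $(\zzs{n-1}Q)^{\perp}\simeq\zzs{q-1}{Q'}$ of type $(q,n)$ (Corollary~\ref{zzsnq}) and applying Lemma~\ref{cateq} in that setting yields $\caM(\GG^c)\simeq\caG(D(\tau(-S_{\perp})+))$, where $S=Q$ and $\tau(-S_{\perp})$ is precisely the $\tau_{\perp}$-slice used to define the right companion.

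Now Proposition~\ref{dbslicedual} gives the identification $D(\tau(-S_{\perp})) = D(Q+)^{\perp}$ at the level of bound subquivers. Combined with the two previous equivalences, this exhibits $\caM(\GG)$ and $\caM(\GG^c)$ as path categories of two mutually quadratic dual bound subquivers living in the quadratically dual pair $\zzs{n-1}Q$ and $\zzs{n-1}Q^{\perp}$. Since quadratic duality of bound quivers induces quadratic duality of the associated $k$-linear path categories (viewed as graded algebras with multiple identities), this is exactly the statement of the theorem.

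The main obstacle is verifying that the relation sets inherited on the two double slices are honestly orthogonal complements of each other in the length-two component. Proposition~\ref{dbslicedual} matches the underlying quivers arrow-for-arrow under the $\perp$-construction, but one must still check that restricting $\rho^{op,\perp}_{\zzs{n-1}Q}$ and its perp-dual to the respective double slices commutes with taking the orthogonal. This compatibility reduces to a local calculation on length-two bound paths, which lie entirely inside the double slice by convexity (Proposition~\ref{dbslice}); the relevant local structure is already worked out in the description of $\tau_{\perp}$-hammocks appearing in the proof of Proposition~\ref{dbslicetwo}. Once this bookkeeping is settled the theorem follows.
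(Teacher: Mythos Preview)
Your proposal is correct and follows essentially the same route as the paper. In the paper the theorem is stated without an explicit proof; it is presented as an immediate consequence of Lemma~\ref{cateq} (and Theorem~\ref{finARq}), Proposition~\ref{dbslicedual}, and the remark immediately preceding the statement that the $n$-preprojective component of an $n$-representation-finite algebra is the $n$-cluster tilted subcategory (Proposition~2.6 of \cite{hio14}); your argument simply makes these steps explicit, including the compatibility of restriction with orthogonal complements that the paper leaves implicit.
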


As a corollaries, we have the following results for hereditary algebras, using Corollary 4.3 of \cite{bbk02}.

\begin{cor}\label{herefin} Let $\GG$ be an hereditary algebra of finite type and let $h$ be Coxeter number of its quiver.
Then there is an $(h-3)$-slice algebra $\GG^c$ of finite type such that the module category of $\GG$ is the quadratic dual of the $(h-3)$-cluster tilted subcategory of $\GG^c$-modules.
\end{cor}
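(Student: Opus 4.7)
The plan is to specialize Theorem \ref{hcluster} to the case $n=1$, where an $n$-slice algebra of finite type is precisely a hereditary algebra of Dynkin type. First I would recall, as mentioned in Section \ref{int}, that the preprojective algebra of a finite-type hereditary algebra is $(q+1,2)$-Koszul, so $\GG$ is a $1$-slice algebra of finite type. By Corollary 4.3 of \cite{bbk02}, the integer $q+3$ equals the Coxeter number $h$ of the underlying Dynkin diagram, so $q=h-3$ and the Coxeter index of $\GG$ is $q+1 = h-2$.

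Next I would invoke Theorem \ref{fin} to produce a companion $\GG^c$: since $\GG$ is a $1$-slice algebra of finite type with Coxeter index $h-2$, there exists a $(h-3)$-slice algebra $\GG^c$ of finite type whose Coxeter index is $n+1 = 2$, and whose repetitive algebra is quadratic dual to that of $\GG$. This $\GG^c$ is the desired $(h-3)$-slice algebra.

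Applying Theorem \ref{hcluster} with $n=1$ and $q=h-3$, the $1$-cluster tilted subcategory of $\GG$ and the $(h-3)$-cluster tilted subcategory of $\GG^c$ are quadratic duals. To finish, I would identify the $1$-cluster tilted subcategory of $\GG$ with the module category: for a hereditary algebra $\GG$ of finite representation type, every indecomposable $\GG$-module is preprojective, so $\caM(\GG) = \add\{\ttn{-t}\GG \mid t\ge 0\}$ is (up to additive closure) the entire module category $\mmod\GG$. Substituting this identification into the conclusion of Theorem \ref{hcluster} yields the corollary.

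I do not expect any serious obstacle; the argument is a direct specialization. The only step that requires a moment of care is the last one, namely the classical fact that for Dynkin $\GG$ the $1$-preprojective component exhausts $\mmod\GG$, and the bookkeeping of the relation $q=h-3$ between the Coxeter index used in this paper and the classical Coxeter number.
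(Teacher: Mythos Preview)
Your proposal is correct and follows exactly the approach the paper intends: the paper simply states that this corollary follows from Theorem \ref{hcluster} together with Corollary 4.3 of \cite{bbk02}, and you have spelled out precisely that specialization, including the identification $q=h-3$ and the classical fact that $\caM(\GG)=\mmod\GG$ for Dynkin type.
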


Using the higher Auslander algebra introduced in \cite{i08}, one gets the following Corollary.

\begin{cor}\label{highAA} For each $n$-Auslander algebra $\caA$ of an n-slice algebra of finite type, there exist a $q$ and a $q$-Auslander algebra $\caA^c$  of a q-slice algebra of finite type such that $\caA$ and $\caA^c$ are quadratic dual one another.
\end{cor}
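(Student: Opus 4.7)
The plan is to deduce Corollary \ref{highAA} by combining the Iyama correspondence between higher Auslander algebras and higher cluster tilting subcategories with Theorem \ref{hcluster}.

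First, given an $n$-Auslander algebra $\caA$, I would invoke Iyama's correspondence to realize $\caA$ as the endomorphism algebra $\End\bigl(\bigoplus_{M\in\caM(\GG)} M\bigr)^{op}$ of an $(n-1)$-cluster tilting subcategory $\caM(\GG)$ over an $(n-1)$-representation-finite algebra $\GG$ (the indexing convention is the one used in \cite{i08}; I would state it explicitly so that the numerics match). By the characterization of $n$-representation-finite algebras as $n$-slice algebras of finite type (combine Theorem \ref{main}, Theorem \ref{main1} and Theorem \ref{trichotomy}), $\GG$ is an $(n-1)$-slice algebra of finite type with some Coxeter index, say $q+1$.

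Second, I would apply Theorem \ref{fin} to produce the companion $\GG^c$, a $q$-slice algebra of finite type with Coxeter index $n$, and then apply Theorem \ref{hcluster} to conclude that the cluster tilting subcategory $\caM(\GG)$ and its counterpart $\caM(\GG^c)$ are quadratic duals as $k$-linear categories with finitely many objects (equivalently, as locally bounded algebras given by the path categories of their Auslander--Reiten quivers described in Theorem \ref{finARq}). Applying the Iyama correspondence in the reverse direction to $\caM(\GG^c)$ produces a $q$-Auslander algebra $\caA^c$.

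Third, I would verify that the quadratic duality transfers from the cluster tilting subcategories to their endomorphism algebras. Concretely: the bound quiver of $\End\bigl(\bigoplus_{M\in\caM(\GG)} M\bigr)^{op}$ can be read off from the $n$-Auslander--Reiten quiver (which is the opposite of the double slice $D(Q)$ by Theorem \ref{narq}), its relations are generated by the $n$-almost split sequences given in \eqref{xzero}, and the analogous description holds for $\caA^c$ using the dual double slice $D(Q)^{\perp}$ via Proposition \ref{dbslicedual}. Since the relations coming from the Koszul complexes \eqref{nnassinG} on one side are precisely the orthogonal complements of the relations on the other side (this is the content of the quadratic duality in Theorem \ref{hcluster}), one obtains $\caA^{!,op}\simeq \caA^c$.

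The main obstacle I expect is the bookkeeping in the third step: translating the quadratic duality of the $k$-linear categories $\caM(\GG)$ and $\caM(\GG^c)$ (whose objects are indecomposable modules) into a quadratic duality of the basic algebras $\caA$ and $\caA^c$, making sure that the grading (imposed by the path length in the Auslander--Reiten quivers, not by a priori grading on the modules) is the one that matches the quadratic duality of the double slice $D(Q)$ inside $\zzs{n-1}Q$. Once the quivers and relation sets are written down explicitly from Theorem \ref{finARq} and Proposition \ref{dbslicedual}, this becomes a routine matching, but it has to be done carefully to conclude $\caA^{!,op}\simeq \caA^c$ rather than merely that they arise in a Morita-equivalent pair.
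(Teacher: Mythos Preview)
Your overall strategy---apply Iyama's higher Auslander correspondence to Theorem \ref{hcluster}---is exactly what the paper intends; the paper offers no proof beyond the sentence ``Using the higher Auslander algebra introduced in \cite{i08}, one gets the following Corollary.'' In that sense your steps 2 and 3 are already more detailed than what the paper supplies, and the bookkeeping you flag in step 3 (identifying the endomorphism algebra of $\caM(\GG)$ with the path algebra of the double slice $D(Q)^{op}$ via Lemma \ref{cateq} and Proposition \ref{dbslicedual}) is the right way to make the quadratic duality explicit.

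There is, however, a genuine error in step 1. You assert that Theorems \ref{main}, \ref{main1} and \ref{trichotomy} together characterize $n$-representation-finite algebras as $n$-slice algebras of finite type. They do not. Theorem \ref{main1} gives one implication (every $n$-slice algebra is $n$-hereditary), but Theorem \ref{main}---and the summary Theorem \ref{suf_nec}---give the converse \emph{only} under the additional hypothesis that the $(n+1)$-preprojective algebra $\Pi(\GG)$ is $(q+1,n+1)$-Koszul. This hypothesis is not automatic for $n$-representation-finite algebras, and without it the entire machinery of Section \ref{finite} (the companion $\GG^c$, the double translation quiver, Theorem \ref{hcluster}) is unavailable: there is no $q$-translation structure on $\zzs{n-1}Q^{\perp}$ to produce a companion in the first place.

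So the argument you wrote does not prove the corollary as literally stated (``for each $n$-Auslander algebra''); it proves it for those $n$-Auslander algebras that arise as $\End_{\GG}\bigl(\bigoplus_{M\in\caM(\GG)}M\bigr)^{op}$ with $\GG$ an $n$-slice algebra of finite type, equivalently with $\Pi(\GG)$ $(q+1,n+1)$-Koszul. Since the corollary sits inside Section \ref{hprec}, which is explicitly about $n$-slice algebras of finite type, this is almost certainly the intended scope---but your write-up should make that restriction explicit rather than misattribute a stronger claim to Theorems \ref{main}--\ref{trichotomy}.
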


It is interesting to know if Theorem \ref{hcluster} holds for higher cluster tilting subcategories of finite type in general?

\bigskip

%\subsection{Example}

Higher slice algebras of finite type appear in  pairs, the following is an example.

\begin{exa}\label{E:one}{\em
The following example was presented in \cite{gx21} to illustrates the $\tau$-hammocks and $\tau$-mutations for $n$-slice algebra of finite type.
Now we consider its dual higher slice algebra.
The Auslander algebra $\GG $ of the path algebra  of type $A_3$ with linear orientation, is a $2$-representation-finite algebra, given by the quiver $Q$:
$$
\xymatrix@C=0.4cm@R0.6cm{
&& \stackrel{6}{\circ} \ar[r] &\stackrel{5}{\circ} \ar[r]\ar[d] &\stackrel{3}{\circ}\ar[d]\\
&&              &\stackrel{4}{\circ} \ar[r]       &\stackrel{2}{\circ}\ar[d] &{} \\
&&                                 &&\stackrel{1}{\circ} &{} \\
}
$$
with $\xa_i$ for the vertical arrow ending at vertex $i$, and $\xb$ for the horizontal arrow ending at vertex $i$.
The the relation set $\rho =\{\xa\xa , \xb\xb, \xa\xb -\xb\xa\}$ and $\rho^{\perp} =\{ \xa\xb +\xb\xa\}\cup \{\xa_1\xb_2,\xa_4\xb_5\}$.
The returning arrow quiver $\tQ$,
$$
\xymatrix@C=0.4cm@R0.6cm{
&& \circ \ar[r] &\circ \ar[r]\ar[d] &\circ\ar[d] &{}\\
&&              &\circ \ar[r] \ar[ul]      &\circ\ar[d] \ar[ul] &{} \\
&&                                 &&\circ \ar[ul] &{} \\
},
$$
is obtained from $Q$ by adding the arrow $\xc_{i}$  going up left to vertex $i$.
The relation set $\trho =\{\xa\xa , \xb\xb, \xc\xc, \xa\xb -\xb\xa, \xa\xc-\xc\xa, \xb\xc -\xc\xb \}$, and its dual relation set $\trho^{\perp} =\{ \xa\xb +\xb\xa, \xa\xc+\xc\xa, \xb\xc +\xc\xb\}\cup \{\xa_1\xb_2,\xa_4\xb_5, \xb_2\xc_4,\xb_3\xc_5, \xc_5\xa_2, \xc_6\xa_4\}$.
For each rhombus formed by two paths of length $2$, we have commutative relations in both relation sets.
We have zero relations for paths of length $2$ of the same type (going in the same direction in our presentation) in $\trho$, while zero relations in $\trho^{\perp}$ are the complements in the cyclic paths of length 3 of the arrows $\xa_1,\xa_2,\xb_3,\xb_5,\xc_4,\xc_6$.

With the  relation set $\trho $, $\tQ$ becomes a $2$-translation quiver with the trivial translation, and with quadratic dual relation $\trho^{\perp}$, $\tQ$ becomes  a $1$-translation quiver with translation defined by $\tau 1=3,  \tau 2=5, \tau 3= 6, \tau 4 = 2, \tau 5 = 4, \tau 6 =1$.

The $3$ connected components of $\zZ_{\vv}\tQ$ look the same, they are isomorphic to  the quiver $\zzs{1}Q$   as follows.

$$
\xymatrix@C=0.4cm@R0.6cm{
&&\ar@{--}[ll]& \circ \ar[r] &\circ \ar[r]\ar[d] &\circ\ar[d] &{}  \circ \ar[r] &\circ \ar[r]\ar[d] &\circ\ar[d] &{} \circ \ar[r] &\circ \ar[r]\ar[d] &\circ\ar[d] &{} \circ \ar[r] &\circ \ar[r]\ar[d] &\circ\ar[d] &\ar@{--}[rr]&&\\
&& \ar@{--}[ll]  &           &\circ \ar[r] \ar[urr]      &\circ\ar[d]\ar[urr] &{}              &\circ \ar[r]\ar[urr]       &\circ\ar[d]\ar[urr] &{}              &\circ \ar[r] \ar[urr]      &\circ\ar[d]\ar[urr] &{}              &\circ \ar[r]      &\circ\ar[d] &\ar@{--}[rr]&&\\
&& \ar@{--}[ll] &                               &&\circ\ar[urr] &{}                                 &&\circ\ar[urr] &{}                                 &&\circ\ar[urr] &{}                                 &&\circ &\ar@{--}[rr]&&\\
}.
$$

The $\tau$-hammocks are as follows.
$$
\xymatrix@C=0.4cm@R0.6cm{
\circ \ar[r] &\circ \ar@{.}[r]\ar[d] &\circ\ar@{.}[d] &{}  \circ \ar@{.}[r] &\circ \ar@{.}[r]\ar@{.}[d] &\circ\ar@{.}[d] &{}{} \circ \ar@{.}[r] &\circ \ar[r]\ar[d] &\circ\ar[d] &{} \circ \ar[r] &\circ \ar@{.}[r]\ar@{.}[d] &\circ\ar@{.}[d] &{}{} \circ \ar@{.}[r] &\circ \ar@{.}[r]\ar@{.}[d] &\circ\ar[d] &{} \circ \ar@{.}[r] &\circ \ar[r]\ar@{.}[d] &\circ\ar@{.}[d]&&\\
             &\circ \ar@{.}[r] \ar[urr]      &\circ\ar@{.}[d]\ar@{.}[urr] &{}              &\circ \ar@{.}[r]     &\circ\ar@{.}[d] &{}{}              &\circ \ar[r] \ar[urr]      &\circ\ar@{.}[d]\ar[urr] &{}              &\circ \ar@{.}[r]      &\circ\ar@{.}[d] &{}{}              &\circ \ar@{.}[r] \ar@{.}[urr]      &\circ\ar@{.}[d]\ar[urr] &{}              &\circ \ar@{.}[r]      &\circ\ar@{.}[d] &&\\
                                &&\circ\ar@{.}[urr] &{}                                 &&\circ &{}{}                                 &&\circ\ar@{.}[urr] &{}                                 &&\circ &{}{}                                 &&\circ\ar@{.}[urr] &{}                                 &&\circ&&\\%first
\circ \ar@{.}[r] &\circ \ar@{.}[r]\ar@{.}[d] &\circ\ar@{.}[d] &{}  \circ \ar[r] &\circ \ar@{.}[r]\ar[d] &\circ\ar@{.}[d]  &{}{} \circ \ar@{.}[r] &\circ \ar@{.}[r]\ar@{.}[d] &\circ\ar@{.}[d] &{} \circ \ar@{.}[r] &\circ \ar[r]\ar[d] &\circ\ar[d] &{} \circ \ar@{.}[r] &\circ \ar@{.}[r]\ar@{.}[d] &\circ\ar@{.}[d] &{} \circ \ar@{.}[r] &\circ \ar@{.}[r]\ar@{.}[d] &\circ\ar@{.}[d]&&\\
             &\circ \ar[r] \ar[urr]      &\circ\ar[d]\ar[urr] &{}              &\circ \ar@{.}[r]     &\circ\ar@{.}[d] &{}{}              &\circ \ar@{.}[r] \ar@{.}[urr]      &\circ\ar[d]\ar[urr] &{}              &\circ \ar[r]      &\circ\ar@{.}[d] &{}{}              &\circ \ar@{.}[r] \ar@{.}[urr]      &\circ\ar@{.}[d]\ar@{.}[urr] &{}              &\circ \ar[r]      &\circ\ar[d] &&\\
                                &&\circ\ar[urr] &{}                                 &&\circ &{}{}                                 &&\circ\ar[urr] &{}                                 &&\circ &{}{}                                 &&\circ\ar[urr] &{}                                 &&\circ&&\\%second
}.
$$

The homogeneous $2$-slice quivers of finite type obtained by take connected component in $\zzv \tQ^{\perp}$:

\Tiny
$
\xymatrix@C=0.2cm@R0.6cm{&&&S_1&&& &{}{} &&&S_2&& &{}{}{} &&&S_3&&&\\
 \circ \ar@{.}[r] &\circ \ar@{.}[r]\ar@{.}[d] &\circ\ar@{.}[d] &{}   \stackrel{(6,0)}{\circ} \ar[r] &\stackrel{(5,1)}{\circ} \ar[r]\ar[d] & \stackrel{(3,2)}{\circ}\ar@{.}[d] &{}{}\circ\ar@{.}[d] &{}   {\circ} \ar@{.}[r] &\stackrel{(5,0)}{\circ} \ar[r]\ar[d] & \stackrel{(3,1)}{\circ}\ar[d] &{} \stackrel{(6,2)}{\circ} \ar@{.}[r] &\circ\ar@{.}[d] &{}{}{} {\circ} \ar@{.}[r] &{\circ} \ar@{.}[r]\ar@{.}[d] & \stackrel{(3,0)}{\circ}\ar[d] &{} \stackrel{(6,1)}{\circ} \ar[r] & \stackrel{(5,2)}{\circ} \ar@{.}[r]\ar@{.}[d] &\circ\ar@{.}[d] &{}\\
 &\circ \ar@{.}[r] \ar@{.}[urr]      &\stackrel{(2,0)}{\circ}\ar[d]\ar[urr] &{}              &\stackrel{(4,2)}{\circ}  \ar@{.}[r]    &\circ\ar@{.}[d] &{} {}  {\circ}\ar@{.}[d]\ar@{.}[urr] &{}              &\stackrel{(4,1)}{\circ}\ar[r]\ar[urr]       &\stackrel{(2,2)}{\circ}\ar@{.}[d]\ar@{.}[urr] &{}              &\circ  &{}{}{}   &\stackrel{(4,0)}{\circ}\ar[r]\ar[urr]       &\stackrel{(2,1)}{\circ}\ar[d]\ar[urr] &{}              &\circ \ar@{.}[r]       &\circ\ar@{.}[d] &{}       \\
 &&\stackrel{(1,1)}{\circ}\ar[urr] &{}                                 &&\circ &{}{}\stackrel{(1,0)}{\circ}\ar[urr] &{}                                 &&\circ\ar@{.}[urr] &{}                                 & &{}{}{}&&\stackrel{(1,2)}{\circ}\ar@{.}[urr]&{}                                &&\circ &{}
}.
$
\normalsize

They all produce isomorphic algebras.
Other $2$-slice quivers can be obtained by taking $\tau$-mutations on homogeneous ones with $S_4 =s^{(2,0)}S_1$, $S_5=s^{(1,0)}S_2$ and $S_5=s^{(1,1)}s^{(2,0)}S_1$.

\tiny
$
\xymatrix@C=0.15cm@R0.6cm{&&&S_4&&& &{}{} &&&S_5&& &{}{}{} &&&S_6&&&\\
 \circ \ar@{.}[r] &\circ \ar@{.}[r]\ar@{.}[d] &\circ\ar@{.}[d] &{}   \stackrel{(6,0)}{\circ} \ar[r] &\stackrel{(5,1)}{\circ} \ar[r]\ar[d] & \stackrel{(3,2)}{\circ}\ar[d] &{}{}\circ\ar@{.}[d] &{}   {\circ} \ar@{.}[r] &\stackrel{(5,0)}{\circ} \ar[r]\ar[d] & \stackrel{(3,1)}{\circ}\ar[d] &{} \stackrel{(6,2)}{\circ} \ar@{.}[r] &\circ\ar@{.}[d] &{}{}{} {\circ} \ar@{.}[r] &{\circ} \ar@{.}[r]\ar@{.}[d] & {\circ}\ar@{.}[d] &{} \stackrel{(6,0)}{\circ} \ar[r] & \stackrel{(5,1)}{\circ} \ar[r]\ar[d] &\stackrel{(3,2)}{\circ}\ar[d] &{}\\
 &\circ \ar@{.}[r] \ar@{.}[urr]      &\stackrel{(2,0)}{\circ}\ar@{.}[d]\ar@{.}[urr] &{}              &\stackrel{(4,2)}{\circ}  \ar[r]    &\stackrel{(2,3)}{\circ}\ar@{.}[d] &{} {}  {\circ}\ar@{.}[d]\ar@{.}[urr] &{}              &\stackrel{(4,1)}{\circ}\ar[r]\ar[urr]       &\stackrel{(2,2)}{\circ}\ar[d]\ar@{.}[urr] &{}              &\circ  &{}{}{}   &{\circ}\ar@{.}[r]\ar@{.}[urr]       &\stackrel{(2,0)}{\circ}\ar@{.}[d]\ar@{.}[urr] &{}              &\stackrel{(4,2)}{\circ} \ar[r]       &\stackrel{(2,3)}{\circ}\ar[d] &{}       \\
 &&\stackrel{(1,1)}{\circ}\ar[urr] &{}                                 &&\circ &{}{}\stackrel{(1,0)}{\circ}\ar@{.}[urr] &{}                                 &&\circ\ar@{.}[urr] &{}                                 & &{}{}{}&&\stackrel{(1,1)}{\circ}\ar@{.}[urr]&{}                                &&\stackrel{(1,4)}{\circ} &{}
}.
$

\normalsize

The $\tau_{\perp}$-hammocks are as follows.
$$
\xymatrix@C=0.4cm@R0.6cm{
\circ \ar[r] &\circ \ar[r]\ar@{.}[d] &\circ\ar@{.}[d] &{}  \circ \ar@{.}[r] &\circ \ar@{.}[r]\ar@{.}[d] &\circ\ar@{.}[d] &{}{} \circ \ar@{.}[r] &\circ \ar[r]\ar[d] &\circ\ar[d] &{} \circ \ar@{.}[r] &\circ \ar@{.}[r]\ar@{.}[d] &\circ\ar@{.}[d] &{}{} \circ \ar@{.}[r] &\circ \ar@{.}[r]\ar@{.}[d] &\circ\ar[d] &{} \circ \ar@{.}[r] &\circ \ar@{.}[r]\ar@{.}[d] &\circ\ar@{.}[d]&&\\
             &\circ \ar@{.}[r] \ar@{.}[urr]      &\circ\ar@{.}[d]\ar@{.}[urr] &{}              &\circ \ar@{.}[r]     &\circ\ar@{.}[d] &{}{}              &\circ \ar[r] \ar@{.}[urr]      &\circ\ar@{.}[d]\ar@{.}[urr] &{}              &\circ \ar@{.}[r]      &\circ\ar@{.}[d] &{}{}              &\circ \ar@{.}[r] \ar@{.}[urr]      &\circ\ar[d]\ar@{.}[urr] &{}              &\circ \ar@{.}[r]      &\circ\ar@{.}[d] &&\\
                                &&\circ\ar@{.}[urr] &{}                                 &&\circ &{}{}                                 &&\circ\ar@{.}[urr] &{}                                 &&\circ &{}{}                                 &&\circ\ar@{.}[urr] &{}                                 &&\circ&&\\%first
\circ \ar@{.}[r] &\circ \ar@{.}[r]\ar@{.}[d] &\circ\ar@{.}[d] &{}  \circ \ar[r] &\circ \ar@{.}[r]\ar@{.}[d] &\circ\ar@{.}[d]  &{}{} \circ \ar@{.}[r] &\circ \ar@{.}[r]\ar@{.}[d] &\circ\ar@{.}[d] &{} \circ \ar@{.}[r] &\circ \ar@{.}[r]\ar[d] &\circ\ar@{.}[d] &{} \circ \ar@{.}[r] &\circ \ar@{.}[r]\ar@{.}[d] &\circ\ar@{.}[d] &{} \circ \ar@{.}[r] &\circ \ar@{.}[r]\ar@{.}[d] &\circ\ar@{.}[d]&\circ& \\
             &\circ \ar[r] \ar[urr]      &\circ\ar@{.}[d]\ar[urr] &{}              &\circ \ar@{.}[r]     &\circ\ar@{.}[d] &{}{}              &\circ \ar@{.}[r] \ar@{.}[urr]      &\circ\ar[d]\ar[urr] &{}              &\circ \ar@{.}[r]      &\circ\ar@{.}[d] &{}{}              &\circ \ar@{.}[r] \ar@{.}[urr]      &\circ\ar@{.}[d]\ar@{.}[urr] &{}              &\circ \ar@{.}[r]\ar[urr]      &\circ\ar@{.}[d] &&\\
                                &&\circ\ar@{.}[urr] &{}                                 &&\circ &{}{}                                 &&\circ\ar[urr] &{}                                 &&\circ &{}{}                                 &&\circ\ar[urr] &{}                                 &&\circ&&\\%second
}.
$$

The dual one is a $1$-slice algebra of finite type.
Here are the  homogeneous $1$-slice quivers of finite type:

\tiny
$
\xymatrix@C=0.2cm@R0.6cm{&&&T_1&&& &{}{} &&&T_2&& &{}{}{} &&&T_3&&&\\
 \circ \ar@{.}[r] &\circ \ar@{.}[r]\ar@{.}[d] &\circ\ar@{.}[d] &{}   \stackrel{(6,0)}{\circ} \ar[r] &\stackrel{(5,1)}{\circ} \ar@{.}[r]\ar@{.}[d] & {\circ}\ar@{.}[d] &{}{}\circ\ar@{.}[d] &{}   {\circ} \ar@{.}[r] &\stackrel{(5,0)}{\circ} \ar[r]\ar[d] & \stackrel{(3,1)}{\circ}\ar@{.}[d] &{} {\circ} \ar@{.}[r] &\circ\ar@{.}[d] &{}{}{} {\circ} \ar@{.}[r] &{\circ} \ar@{.}[r]\ar@{.}[d] & \stackrel{(3,0)}{\circ}\ar[d] &{} \stackrel{(6,1)}{\circ} \ar@{.}[r] & {\circ} \ar@{.}[r]\ar@{.}[d] &\circ\ar@{.}[d] &{}\\
 &\circ \ar@{.}[r] \ar@{.}[urr]      &\stackrel{(2,0)}{\circ}\ar[d]\ar[urr] &{}              &{\circ}  \ar@{.}[r]    &\circ\ar@{.}[d] &{} {}  {\circ}\ar@{.}[d]\ar@{.}[urr] &{}              &\stackrel{(4,1)}{\circ}\ar@{.}[r]\ar@{.}[urr]       &{\circ}\ar@{.}[d]\ar@{.}[urr] &{}              &\circ  &{}{}{}   &\stackrel{(4,0)}{\circ}\ar[r]\ar[urr]       &\stackrel{(2,1)}{\circ}\ar@{.}[d]\ar@{.}[urr] &{}              &\circ \ar@{.}[r]       &\circ\ar@{.}[d] &{}       \\
 &&\stackrel{(1,1)}{\circ}\ar@{.}[urr] &{}                                 &&\circ &{}{}\stackrel{(1,0)}{\circ}\ar[urr] &{}                                 &&\circ\ar@{.}[urr] &{}                                 & &{}{}{}&&{\circ}\ar@{.}[urr]&{}                                &&\circ &{}
}.
$
\normalsize
They are isomorphic $1$-slices in $\olQ$.
Using $\tau_{\perp}$-mutation, we get all the $1$-slice quivers in $\olQ$.
The non-isomorphic ones are $T_4= s^{(6,0)}T_1$,  $T_5= s^{(2,0)}T_1$ and $T_6= s^{(1,1)}s^{(2,0)}T_1$.

\tiny
$
\xymatrix@C=0.2cm@R0.6cm{&&&T_4&&& &{}{} &&&T_5&& &{}{}{} &&&T_6&&&\\
 \circ \ar@{.}[r] &\circ \ar@{.}[r]\ar@{.}[d] &\circ\ar@{.}[d] &{}   {\circ} \ar@{.}[r] &\stackrel{(5,1)}{\circ} \ar[r]\ar@{.}[d] & \stackrel{(3,2)}{\circ}\ar@{.}[d] &{}{}\circ\ar@{.}[d] &{}   \stackrel{(6,0)}{\circ} \ar[r] &\stackrel{(5,1)}{\circ} \ar@{.}[r]\ar[d] & {\circ}\ar@{.}[d] &{} {\circ} \ar@{.}[r] &\circ\ar@{.}[d] &{}{}{} \stackrel{(6,0)}{\circ} \ar[r] &\stackrel{(5,1)}{\circ} \ar@{.}[r]\ar[d] & {\circ}\ar@{.}[d] &{} \stackrel{(6,3)}{\circ} \ar@{.}[r] & {\circ} \ar@{.}[r]\ar@{.}[d] &\circ\ar@{.}[d] &{}\\
 &\circ \ar@{.}[r] \ar@{.}[urr]      &\stackrel{(2,0)}{\circ}\ar[d]\ar[urr] &{}              &{\circ}  \ar@{.}[r]    &\circ\ar@{.}[d] &{} {}  {\circ}\ar@{.}[d]\ar@{.}[urr] &{}              &\stackrel{(4,2)}{\circ}\ar@{.}[r]\ar@{.}[urr]       &{\circ}\ar@{.}[d]\ar@{.}[urr] &{}              &\circ  &{}{}{}   &\stackrel{(4,2)}{\circ}\ar@{.}[r]\ar[urr]       &{\circ}\ar@{.}[d]\ar@{.}[urr] &{}              &\circ \ar@{.}[r]       &\circ\ar@{.}[d] &{}       \\
 &&\stackrel{(1,1)}{\circ}\ar@{.}[urr] &{}                                 &&\circ &{}{}\stackrel{(1,1)}{\circ}\ar[urr] &{}                                 &&\circ\ar@{.}[urr] &{}                                 & &{}{}{}&&{\circ}\ar@{.}[urr]&{}                                &&\circ &{}
}.
$
\normalsize

Double slices are as following.

$$ D(S_1)
\xymatrix@C=0.4cm@R0.6cm{
&&  {}{}  & {\circ} \ar@{.}[r] &{\circ} \ar@{.}[r]\ar@{.}[d] &{\circ}\ar@{.}[d] &{}  \stackrel{(6,0)}{\circ} \ar[r] & \stackrel{(5,1)}{\circ} \ar[r]\ar[d] &\stackrel{(3,2)}{\circ}\ar@[green][d] &{} \stackrel{(6,3)}{\circ} \ar@[red][r] &\stackrel{(5,4)}{\circ} \ar@{.}[r]\ar@{.}[d] &{\circ}\ar@{.}[d] &{} \\ %{\circ} \ar[r] &{\circ} \ar[r]\ar[d] &{\circ}\ar[d] &  {}{}  &&\\
&&   {}{}    &           &{\circ} \ar@{.}[r] \ar@{.}[urr]      & \stackrel{(2,0)}{\circ}\ar[d]\ar[urr] &{}              & \stackrel{(4,2)}{\circ} \ar@[green][r]\ar@[green][urr]       &\stackrel{(2,3)}{\circ}\ar@[red][d]\ar@[red][urr] &{}             &{\circ} \ar@{.}[r] %\ar[urr]
    &{\circ}\ar@{.}[d] \\ %\ar[urr] &{} \\%             &{\circ} \ar[r]      &{\circ}\ar[d] &  {}{}  &&\\
&&   {}{}   &                               &&\stackrel{(1,1)}{\circ}\ar[urr] &{}                                 &&\stackrel{(1,4)}{\circ}\ar@{.}[urr] &{}                                 &&{\circ} \\ %\ar[urr] &{}               %                  &&{\circ} &  {}{}  &&\\
}.
$$

$$D(S_2)
\xymatrix@C=0.4cm@R0.6cm{
&&  {}{}  & {\circ} \ar@{.}[r] &{\circ} \ar@{.}[r]\ar@{.}[d] &{\circ}\ar@{.}[d] &{}  {\circ} \ar@{.}[r] &\stackrel{(5,0)}{\circ} \ar[r]\ar[d] &\stackrel{(3,1)}{\circ}\ar[d] &{} \stackrel{(6,2)}{\circ} \ar@[green][r] &\stackrel{(5,3)}{\circ} \ar@[red][r]\ar@[red][d] &\stackrel{(3,4)}{\circ}\ar@{.}[d] &{} \\% {\circ} \ar[r] &{\circ} \ar[r]\ar[d] &{\circ}\ar[d] &  {}{}  &&\\
&&   {}{}    &           &{\circ} \ar@{.}[r] \ar@{.}[urr]      &{\circ}\ar@{.}[d]\ar@{.}[urr] &{}              & \stackrel{(4,1)}{\circ} \ar[r]\ar[urr]       &\stackrel{(2,2)}{\circ}\ar@[green][d]\ar@[green][urr] &{}              &\stackrel{(4,4)}{\circ} \ar@{.}[r] %\ar[urr]
      &{\circ}\ar@{.}[d] \\ % \ar@{.}[urr] &{}              &{\circ} \ar[r]      &{\circ}\ar[d] &  {}{}  &&\\
&&   {}{}   &                               &&\stackrel{(1,0)}{\circ}\ar[urr] &{}                                 &&\stackrel{(1,3)}{\circ}\ar@[red][urr] &{}                                 &&{\circ} \\ %\ar[urr] &{}                                 &&{\circ} &  {}{}  &&\\
}.
$$
$$D(S_3)
\xymatrix@C=0.4cm@R0.6cm{
&&  {}{}  & {\circ} \ar@{.}[r] &{\circ} \ar@{.}[r]\ar@{.}[d] &\stackrel{(3,0)}{\circ}\ar[d] &{}  \stackrel{(6,1)}{\circ} \ar[r] &\stackrel{(5,2)}{\circ} \ar@[green][r]\ar@[green][d] &\stackrel{(3,3)}{\circ}\ar@[red][d] &{} \stackrel{(6,4)}{\circ} \ar@{.}[r] &{\circ} \ar@{.}[r]\ar@{.}[d] &{\circ}\ar@{.}[d] &{} \\ % {\circ} \ar[r] &{\circ} \ar[r]\ar[d] &{\circ}\ar[d] &  {}{}  &&\\
&&   {}{}    &           &\stackrel{(4,0)}{\circ} \ar[r] \ar[urr]      &\stackrel{(2,1)}{\circ}\ar[d]\ar[urr] &{}              &\stackrel{(4,3)}{\circ} \ar@[red][r]\ar@[red][urr]       &\stackrel{(2,4)}{\circ}\ar@{.}[d]\ar@{.}[urr]
&{}              &{\circ} \ar@{.}[r] %\ar@{.}[urr]
     &{\circ}\ar@{.}[d] \\% \ar@{.}[urr] &{}              &{\circ} \ar[r]      &{\circ}\ar[d] &  {}{}  &&\\
&&   {}{}   &                               &&\stackrel{(1,2)}{\circ}\ar@[green][urr] &{}                                 &&{\circ}\ar@{.}[urr] &{}                                 &&{\circ} \\ %\ar[urr] &{}                                 &&{\circ} &  {}{}  &&\\
}.
$$
$$D(S_4)
\xymatrix@C=0.4cm@R0.6cm{
&&  {}{}  & {\circ} \ar@{.}[r] &{\circ} \ar@{.}[r]\ar@{.}[d] &{\circ}\ar@{.}[d] &{}  \stackrel{(6,0)}{\circ} \ar[r] &\stackrel{(5,1)}{\circ} \ar[r]\ar[d] & \stackrel{(3,2)}{\circ}\ar[d] &{} \stackrel{(6,3)}{\circ}\ar@[red][r] &\stackrel{(5,4)}{\circ} \ar@{.}[r]\ar@[red][d] &{\circ}\ar@{.}[d] &{} \\ %{\circ} \ar[r] &{\circ} \ar[r]\ar[d] &{\circ}\ar[d] & {}{}  &&\\
&&   {}{}    &           &{\circ} \ar@{.}[r] \ar@{.}[urr]      &{\circ}\ar@{.}[d]\ar@{.}[urr] &{}              & \stackrel{(4,2)}{\circ} \ar[r]\ar@[green][urr]       &\stackrel{(2,3)}{\circ}\ar@[green][d]\ar@[green][urr] &{}              &\stackrel{(4,5)}{\circ} \ar@{.}[r] %\ar[urr]
     &{\circ} \ar@{.}[d] \\ %\ar@{.}[urr] &{}              &{\circ} \ar[r]      &{\circ}\ar[d] &  {}{}  &&\\
&&   {}{}   &                               &&\stackrel{(1,1)}{\circ}\ar[urr] &{}                                 &&\stackrel{(1,4)}{\circ}\ar@[red][urr] &{}                                 &&{\circ} \\%\ar[urr] &{}                                 &&{\circ} &  {}{}  &&\\
}.
$$
$$D(S_5)
\xymatrix@C=0.4cm@R0.6cm{
&&  {}{}  & {\circ} \ar@{.}[r] &\stackrel{(5,0)}{\circ} \ar[r]\ar[d] &\stackrel{(3,1)}{\circ}\ar[d] &{}  \stackrel{(6,2)}{\circ} \ar@[green][r] &\stackrel{(5,3)}{\circ} \ar@[red][r]\ar@[red][d] &\stackrel{(3,4)}{\circ}\ar@{.}[d] &{} \stackrel{(6,5)}{\circ} \ar@{.}[r] &{\circ} \ar@{.}[r]\ar@{.}[d] &{\circ}\ar@{.}[d] &{} \\ %{\circ} \ar[r] &{\circ} \ar[r]\ar[d] &{\circ}\ar[d] &  {}{}  &&\\
&&   {}{}    &           &\stackrel{(4,1)}{\circ} \ar[r] \ar[urr]      &\stackrel{(2,2)}{\circ}\ar[d]\ar@[green][urr] &{}              &\stackrel{(4,4)}{\circ} \ar@{.}[r]\ar@[red][urr]       &{\circ}\ar@{.}[d]%\ar[urr]
&{}              &{\circ} \ar@{.}[r] %\ar[urr]
&{\circ}\ar@{.}[d] \\ %\ar[urr] &{}              &{\circ} \ar[r]      &{\circ}\ar[d] &  {}{}  &&\\
&&   {}{}   &                               &&\stackrel{(1,3)}{\circ}\ar@[green][urr] &{}                                 &&{\circ}\ar@{.}[urr] &{}                                 &&{\circ} \\%\ar[urr] &{}                                 &&{\circ} &  {}{}  &&\\
}.
$$
$$D(S_6)
\xymatrix@C=0.4cm@R0.6cm{
&&  {}{}  & \stackrel{(6,0)}{\circ} \ar[r] &\stackrel{(5,1)}{\circ} \ar[r]\ar[d] & \stackrel{(3,2)}{\circ}\ar[d] &{}  \stackrel{(6,3)}{\circ} \ar@[red][r] &\stackrel{(5,4)}{\circ} \ar@{.}[r]\ar@[red][d] &{\circ}\ar@{.}[d] &{} \stackrel{(6,6)}{\circ} \ar@{.}[r] &{\circ} \ar@{.}[r]\ar@{.}[d] &{\circ}\ar@{.}[d] &{}\\ % {\circ} \ar[r] &{\circ} \ar[r]\ar[d] &{\circ}\ar[d] &  {}{}  &&\\
&&   {}{}    &           &\stackrel{(4,2)}{\circ} \ar[r] \ar@[green][urr]      &\stackrel{(2,3)}{\circ}\ar[d]\ar@[green][urr] &{}              &\stackrel{(4,5)}{\circ} \ar@{.}[r]\ar@[red][urr]       &{\circ}\ar@{.}[d] \ar@{.}[urr] &{}              &{\circ} \ar@{.}[r] %\ar[urr]
  &{\circ}\ar@{.}[d]  \\ %\ar[urr] &{}              &{\circ} \ar[r]      &{\circ}\ar[d] &  {}{}  &&\\
&&   {}{}   &                               &&\stackrel{(1,4)}{\circ}\ar@[green][urr] &{}                                 &&{\circ}\ar@{.}[urr] &{}                                 &&{\circ} \\ %\ar[urr] &{}                                 &&{\circ} &  {}{}  &&\\
}.
$$

Double slices with respect to $\tau_{\perp}$.

$$D(T_1)
\xymatrix@C=0.4cm@R0.6cm{
&&  {}{}  & {\circ} \ar@{.}[r] &{\circ} \ar@{.}[r]\ar@{.}[d] &{\circ}\ar@{.}[d] &{}  \stackrel{(6,0)}{\circ} \ar[r] &\stackrel{(5,1)}{\circ} \ar@[green][r]\ar@[green][d] &\stackrel{(3,2)}{\circ}\ar@[purple][d] &{} \stackrel{(6,3)}{\circ} \ar@[purple][r] &\stackrel{(5,4)}{\circ} \ar@{.}[r]\ar@{.}[d] &{\circ}\ar@{.}[d] &{} \\ %{\circ} \ar[r] &{\circ} \ar[r]\ar[d] &{\circ}\ar[d] &  {}{}  &&\\
&&   {}{}    &           &{\circ} \ar@{.}[r] \ar@{.}[urr]      &\stackrel{(2,0)}{\circ}\ar[d]\ar[urr] &{}              &\stackrel{(4,2)}{\circ} \ar@[purple][r]\ar@[purple][urr]       &\stackrel{(2,3)}{\circ}\ar@[purple][d]\ar@[purple][urr] &{}              &{\circ} \ar@{.}[r] %\ar@{.}[urr]
   &{\circ}\ar@{.}[d] \\ %\ar[urr] &{}              &{\circ} \ar[r]      &{\circ}\ar[d] &  {}{}  &&\\
&&   {}{}   &                               &&\stackrel{(1,1)}{\circ}\ar@[green][urr] &{}                                 &&\stackrel{(1,4)}{\circ}\ar@{.}[urr] &{}                                 &&{\circ} \\ %\ar[urr] &{}                                 &&{\circ} &  {}{}  &&\\
}.
$$

$$D(T_2)
\xymatrix@C=0.4cm@R0.6cm{
&&  {}{}  & {\circ} \ar@{.}[r] &{\circ} \ar@{.}[r]\ar@{.}[d] &{\circ}\ar@{.}[d] &{}  {\circ} \ar@{.}[r] &\stackrel{(5,0)}{\circ} \ar[r]\ar[d] &\stackrel{(3,1)}{\circ}\ar@[green][d] &{} \stackrel{(6,2)}{\circ} \ar@[purple][r] &\stackrel{(5,3)}{\circ} \ar@[purple][r]\ar@[purple][d] &\stackrel{(3,4)}{\circ}\ar@{.}[d] &{} \\ %{\circ} \ar[r] &{\circ} \ar[r]\ar[d] &{\circ}\ar[d] &  {}{}  &&\\
&&   {}{}    &           &{\circ} \ar@{.}[r] \ar@{.}[urr]      &{\circ}\ar@{.}[d]\ar@{.}[urr] &{}              &\stackrel{(4,1)}{\circ} \ar@[green][r]\ar@[green][urr]       &\stackrel{(2,2)}{\circ}\ar@[purple][d]\ar@[purple][urr] &{}              &\stackrel{(4,4)}{\circ} \ar@{.}[r] %\ar@{.}[urr]
&{\circ}\ar@{.}[d] \\ %\ar@{.}[urr] &{}              &{\circ} \ar[r]      &{\circ}\ar[d] &  {}{}  &&\\
&&   {}{}   &                               &&\stackrel{(1,0)}{\circ}\ar[urr] &{}                                 &&\stackrel{(1,3)}{\circ} \ar@[purple][urr] &{}                                 &&{\circ} \\ %\ar[urr] &{}                                 &&{\circ} &  {}{}  &&\\
}.
$$

$$
\xymatrix@C=0.4cm@R0.6cm{D(T_3)
&&  {}{}  & {\circ} \ar@{.}[r] &{\circ} \ar@{.}[r]\ar@{.}[d] &\stackrel{(3,0)}{\circ}\ar[d] &{}  \stackrel{(6,1)}{\circ} \ar@[green][r] &\stackrel{(5,2)}{\circ} \ar@[purple][r]\ar@[purple][d]&\stackrel{(3,3)}{\circ}\ar@[purple][d] &{} \stackrel{(6,4)}{\circ} \ar@{.}[r] &{\circ} \ar@{.}[r]\ar@{.}[d] &{\circ}\ar@{.}[d] &{} \\ % {\circ} \ar[r] &{\circ} \ar[r]\ar[d] &{\circ}\ar[d] &  {}{}  &&\\
&&   {}{}    &           &\stackrel{(4,0)}{\circ} \ar[r] \ar[urr]      &\stackrel{(2,1)}{\circ}\ar@[green][d]\ar@[green][urr] &{}              &\stackrel{(4,3)}{\circ} \ar@[purple][r]\ar@[purple][urr]       &\stackrel{(2,4)}{\circ}\ar@{.}[d]\ar@{.}[urr] &{}              &{\circ} \ar@{.}[r] %\ar[urr]
&{\circ}\ar@{.}[d] \\ %\ar[urr] &{}              &{\circ} \ar[r]      &{\circ}\ar[d] &  {}{}  &&\\
&&   {}{}   &                               &&\stackrel{(1,2)}{\circ}\ar@[purple][urr] &{}                                 &&{\circ}\ar@{.}[urr] &{}                                 &&{\circ} \\ %\ar[urr] &{}                                 &&{\circ} &  {}{}  &&\\
}.
$$

$$
\xymatrix@C=0.4cm@R0.6cm{ D(T_4)
&&  {}{}  & {\circ} \ar@{.}[r] &{\circ} \ar@{.}[r]\ar@{.}[d] &{\circ}\ar@{.}[d] &{}  {\circ} \ar@{.}[r] &\stackrel{(5,1)}{\circ} \ar[r]\ar@[green][d] &\stackrel{(3,2)}{\circ}\ar@[green][d] &{} \stackrel{(6,3)}{\circ} \ar@[purple][r] &\stackrel{(5,4)}{\circ} \ar@[purple][r]\ar@{.}[d] &\stackrel{(3,5)}{\circ}\ar@{.}[d] &{}\\ % {\circ} \ar[r] &{\circ} \ar[r]\ar[d] &{\circ}\ar[d] &  {}{}  &&\\
&&   {}{}    &           &{\circ} \ar@{.}[r] \ar@{.}[urr]      &\stackrel{(2,0)}{\circ}\ar[d]\ar[urr] &{}              &\stackrel{(4,2)}{\circ} \ar@[purple][r]\ar@[purple][urr]       &\stackrel{(2,3)}{\circ}\ar@[purple][d]\ar@[purple][urr] &{}              &{\circ} \ar@{.}[r] %\ar[urr]
  &{\circ}\ar@{.}[d] \\ %\ar[urr] &{}              &{\circ} \ar[r]      &{\circ}\ar[d] &  {}{}  &&\\
&&   {}{}   &                               &&\stackrel{(1,1)}{\circ}\ar@[green][urr] &{}                                 &&\stackrel{(1,4)}{\circ}\ar@{.}[urr] &{}                                 &&{\circ} \\ %\ar[urr] &{}                                 &&{\circ} &  {}{}  &&\\
}.
$$

$$
\xymatrix@C=0.4cm@R0.6cm{D(T_5)
&&  {}{}  & {\circ} \ar@{.}[r] &{\circ} \ar@{.}[r]\ar@{.}[d] &{\circ}\ar@{.}[d] &{}  \stackrel{(6,0)}{\circ} \ar[r] &\stackrel{(5,1)}{\circ} \ar@[green][r]\ar[d] &\stackrel{(3,2)}{\circ}\ar@[purple][d] &{} \stackrel{(6,3)}{\circ} \ar@[purple][r] &\stackrel{(5,4)}{\circ} \ar@{.}[r]\ar@[purple][d] &{\circ}\ar@{.}[d] &{} \\ % {\circ} \ar[r] &{\circ} \ar[r]\ar[d] &{\circ}\ar[d] &  {}{}  &&\\
&&   {}{}    &           &{\circ} \ar@{.}[r] \ar@{.}[urr]      &{\circ}\ar@{.}[d]\ar@{.}[urr] &{}              &\stackrel{(4,2)}{\circ} \ar@[green][r]\ar@[green][urr]       &\stackrel{(2,3)}{\circ}\ar@[purple][d]\ar@[purple][urr] &{}              &\stackrel{(4,5)}{\circ} \ar@{.}[r] %\ar[urr]
&{\circ}\ar@{.}[d] \\ %\ar[urr] &{}              &{\circ} \ar[r]      &{\circ}\ar[d] &  {}{}  &&\\
&&   {}{}   &                               &&\stackrel{(1,1)}{\circ}\ar[urr] &{}                                 &&\stackrel{(1,4)}{\circ}\ar@[purple][urr] &{}                                 &&{\circ} \\ %\ar[urr] &{}                                 &&{\circ} &  {}{}  &&\\
}.
$$

$$D(T_6)
\xymatrix@C=0.4cm@R0.6cm{
&&  {}{}  & \stackrel{(6,0)}{\circ} \ar[r] &\stackrel{(5,1)}{\circ} \ar@[green][r]\ar[d] &\stackrel{(3,2)}{\circ}\ar@[purple][d] &{}  \stackrel{(6,3)}{\circ} \ar@[green][r] &\stackrel{(5,4)}{\circ} \ar@{.}[r]\ar@[purple][d] &{\circ}\ar@{.}[d] &{} \stackrel{(6,6)}{\circ} \ar@{.}[r] &{\circ} \ar@{.}[r]\ar@{.}[d] &{\circ}\ar@{.}[d] &{} \\ %{\circ} \ar[r] &{\circ} \ar[r]\ar[d] &{\circ}\ar[d] &  {}{}  &&\\
&&   {}{}    &           &\stackrel{(4,2)}{\circ} \ar@[green][r] \ar[urr]      &\stackrel{(2,3)}{\circ}\ar@[purple][d]\ar@[purple][urr] &{}              &\stackrel{(4,5)}{\circ} \ar@{.}[r]\ar@[purple][urr]       &{\circ}\ar@{.}[d] \ar@{.}[urr] &{}              &{\circ} \ar@{.}[r] %\ar[urr]
   &{\circ}\ar@{.}[d] \\ %\ar[urr] &{}              &{\circ} \ar[r]      &{\circ}\ar[d] &  {}{}  &&\\
&&   {}{}   &                               &&\stackrel{(1,4)}{\circ}\ar@[purple][urr] &{}                                 &&{\circ}\ar@{.}[urr] &{}                                 &&{\circ} \\ %\ar[urr] &{}                                 &&{\circ} &  {}{}  &&\\
}.
$$

}
\end{exa}

\section*{Acknowledgment}

The authors would like to thank the referees for reading the manuscript carefully and for suggestions and comments on revising and improving the paper.

{}

\end{document}